\newtheorem{df}{Definition}[section]
\newtheorem{thm}[df]{Theorem}
\newtheorem{lem}[df]{Lemma}
\newtheorem{prop}[df]{Proposition}
\newtheorem{cor}[df]{Corollary}
\newtheorem{exa}[df]{Example}
\newtheorem{rem}[df]{Remark}
\newcommand{\A}{{\mathcal{A}}}
\begin{document}
\title{{\bf BiHom-Lie colour algebras structures}}
\author{\normalsize \bf  Kadri Abdaoui, Abdelkader Ben Hassine and Abdenacer Makhlouf }

\maketitle

{\bf\begin{center}{Abstract}\end{center}}

 BiHom-Lie Colour algebra is a generalized Hom-Lie Colour algebra endowed with two commuting
multiplicative linear maps. The main purpose of this paper is to define representations and a cohomology of BiHom-Lie colour algebras and to study some key constructions and properties.
 Moreover, we discuss  $\alpha^{k}\beta^l$-generalized derivations,  $\alpha^{k}\beta^l$-quasi-derivations and $\alpha^{k}\beta^l$-quasi-centroid. We provide some properties  and their relationships with  BiHom-Jordan colour algebra.

\noindent\textbf{Keywords:} Cohomology, Representation, BiHom-Lie colour algebra, $\alpha^{k}\beta^l$-generalized derivation,  BiHom-Jordan colour algebra.

\noindent{\textbf{MSC(2010):}}  17B75, 17B55, 17B40.
\renewcommand{\thefootnote}{\fnsymbol{footnote}}
\footnote[0]{ Corresponding author (Abdenacer Makhlouf): Abdenacer.Makhlouf@uha.fr,  Laboratoire de Math\'ematiques, Informatique
et Applications, University of Haute Alsace, 4 rue Fr\`eres Lumi\`ere, 68093 Mulhouse, France.}
\footnote[0]{Abdaoui Kadri: Abdaoui.elkadri@hotmail.com, University of Sfax, Faculty of Sciences,  BP
1171, 3000 Sfax.}
\footnote[0]{Abdelkader Ben Hassine: benhassine.abdelkader@yahoo.fr, University of Sfax, Faculty of Sciences,  BP
1171, 3000 Sfax, Tunisia.}
\section*{Introduction}
   As generalizations of Lie algebras, Hom-Lie algebras were introduced motivated by
applications in Physics and to deformations of Lie algebras, especially Lie algebras of
vector fields.\\
   Hom-Lie colour algebras are the natural generalizations of Hom-Lie algebras and Hom-Lie superalgebras. In recent years, they have become an interesting subject of mathematics and physics. A cohomology  of  Lie colour algebras were introduced and investigated in \cite{scheunert1979theory,scheunert1979generalized}  and representations of  Lie colour algebras were explicitly described in \cite{feldvoss2001represcolor}.
 Hom-Lie colour algebras were studied first  in \cite{yuan2010hom}, while   in the particular case of Hom-Lie superalgebras, a cohomology theory   was provided in \cite{sadaoui2011cohomology}. Notice that  for Hom-Lie algebras, cohomology was described in \cite{AEM,makhlouf2007notes,sheng2010representations} and representations also in \cite{benayadi2010hom}.


A BiHom-algebra is an algebra in such a way that the identities defining the structure are twisted by two homomorphisms
$\alpha$,$\beta$. This class of algebras was introduced from a categorical approach in \cite{Gra-Mak-Men-Pan} as an extension of the class of Hom-algebras.
More applications of  BiHom-Lie algebras, BiHom-algebras, BiHom-Lie superalgebras and BiHom-Lie admissible superalgebras can be found in \cite{Chen-Qi,Wang-Guo}.

In the present article, we introduce and study the BiHom-Lie colour algebras,
which can be viewed as an extension of BiHom-Lie (super)algebras to $\Gamma$-graded
algebras, where $\Gamma$ is any abelian group.

The paper is organized as follows. In Section 1, we recall  definitions and some key constructions of  BiHom-Lie colour algebras and provide a list of twists of  BiHom-Lie colour algebras. In Section $2$ we introduce a multiplier $\sigma$ on the abelian group $\Gamma$ and provide
constructions of new BiHom-Lie colour algebras using the twisting action of the
multiplier $\sigma$. We show that the $\sigma$-twist of any BiHom-Lie colour algebra is still a BiHom-Lie colour algebra.

%

In Section $3$, we extend the classical concept of Lie admissible algebras to BiHom-
Lie colour settings. Hence, we obtain a more generalized algebra class called BiHom-Lie
colour admissible algebras. We also explore some other general class of algebras: $G$–
BiHom-associative colour algebras, where $G$ is any subgroup of the symmetric group
$S_3$, using which we classify all the BiHom-Lie colour admissible algebras.\\
In Section $4$, we construct a family of cohomologies of BiHom-Lie colour algebras, discuss representation theory in connection with cohomology. 
In the last section, we discuss homogeneous $\alpha^{k}\beta^l$-generalized derivations and the $\alpha^{k}\beta^l$-centroid of BiHom-Lie colour algebras. We generalize to Hom-setting the results  obtained  in \cite{LIn ni}. Moreover, We show that $\alpha^{-1}\beta^2$-derivations of BiHom-Lie colour algebras give rise to BiHom-Jordan colour algebras.
\section{Definitions, proprieties and Examples}
In the following we summarize definitions of BiHom-Lie and BiHom-associative colour algebraic
structures generalizing the well known Hom-Lie and Hom-associative colour algebras.\\
Throughout the article we let $\mathbb{K}$ be an algebraically closed field of characteristic $0$ and
$\mathbb{K^{\ast}}=\mathbb{K}\backslash \{0\}$ be the group of units of $\mathbb{K}$.

Let $\Gamma$ be an abelian group. A vector space $V$ is said to be $\Gamma$-graded, if there is a family
$(V_{\gamma})_{\gamma\in \Gamma}$ of vector subspace of $V$ such that $$V=\bigoplus_{\gamma\in \Gamma}V_{\gamma}.$$
An element $x \in V$ is said to be homogeneous of  degree $\gamma \in \Gamma$ if $x \in V_{\gamma}, \gamma\in \Gamma$, and in this case, $\gamma$ is called the degree of $x$. As usual, we denote by $\overline{x}$ the degree of an element $x \in V$. Thus each homogeneous
element $x \in V$ determines a unique group of element  $\overline{x} \in \Gamma$ by $x \in V_{\overline{x}}$. Fortunately, We can drop
the symbol $"-"$, since confusion rarely occurs.  In the sequel, we will denote by $\mathcal{H}(V)$ the set of all the homogeneous elements of $V$.\\
Let $V=\bigoplus_{\gamma\in \Gamma}V_{\gamma}$ and $V^{'}=\bigoplus_{\gamma\in \Gamma}V^{'}_{\gamma}$ be two $\Gamma$-graded vector spaces. A linear mapping $f: V \longrightarrow V^{'}$  is said to be homogeneous of  degree $\upsilon \in \Gamma$ if
  $f(V_{\gamma})\subseteq V^{'}_{\gamma+\upsilon}, ~~ \forall \gamma \in \Gamma.$
If in addition $f$ is  homogeneous of degree zero, i.e. $f(V_{\gamma})\subseteq V^{'}_{\gamma}$ holds for any $\gamma \in \Gamma$, then $f$ is said to be even.

An algebra $\mathcal{A}$ is said to be $\Gamma$-graded if its underlying vector space is $\Gamma$-graded, i.e. $\mathcal{A}=\bigoplus_{\gamma\in \Gamma}\mathcal{A}_{\gamma}$, and if, furthermore $\mathcal{A}_{\gamma}\mathcal{A}_{\gamma'}\subseteq \mathcal{A}_{\gamma+\gamma'}$, for all $\gamma, \gamma'\in \Gamma$. It is easy to see
that if $\mathcal{A}$ has a unit element $e$, it follows that $e \in \mathcal{A}_{0}$. A subalgebra of $\mathcal{A}$ is said to be graded if it is graded as a subspace of $\mathcal{A}$.\\
Let $\mathcal{A}^{'}$ be another $\Gamma$-graded algebra. A homomorphism $f:\mathcal{A} \longrightarrow \mathcal{A}^{'}$ of $\Gamma$-graded algebras is by definition a homomorphism  of the algebra $\mathcal{A}$ into the algebra  $\mathcal{A}^{'}$, which is, in addition an even mapping.\\
\begin{df} Let $\mathbb{K}$ be a field and  $\Gamma$ be an abelian group. A map $\varepsilon:\Gamma\times\Gamma\rightarrow \mathbb{K^{\ast}}$ is called a skewsymmetric \textit{bicharacter} on ${\Gamma}$ if the following identities hold, for all $a,b,c$ in $\Gamma$
\begin{enumerate}
\item~~$\varepsilon(a,b)~\varepsilon(b,a)=1,$
\item~~$\varepsilon(a,b+c)=\varepsilon(a,b)~\varepsilon(a,c),$
\item~~$\varepsilon(a+b,c)=\varepsilon(a,c)~\varepsilon(b,c).$
\end{enumerate}
\end{df}
 The definition above implies, in particular, the following relations
$$\varepsilon(a,0)=\varepsilon(0,a)=1,\ \varepsilon(a,a)=\pm1, \  \textrm{for\ all}\  a \in \Gamma.$$
If $x$ and $x'$ are two homogeneous elements of degree $\gamma$ and $\gamma'$ respectively and $\varepsilon$ is a skewsymmetric bicharacter, then we shorten the notation by writing $\varepsilon(x,x')$ instead of $\varepsilon(\gamma,\gamma')$.
\begin{df} A \textit{BiHom-Lie colour} algebra is a $5$-uple $(\mathcal{A},[.,.],\varepsilon,\alpha,\beta)$ consisting of a $\Gamma$-graded vector space $\mathcal{A}$, an even bilinear mapping
$[.,.]:\mathcal{A}\times\mathcal{A}\rightarrow\mathcal{A}$ $($i.e. $[\mathcal{A}_{a},\mathcal{A}_{b}]\subseteq \mathcal{A}_{a+b}$ for all $a,b \in \Gamma)$, a bicharacter $\varepsilon:\Gamma\times\Gamma\rightarrow \mathbb{K^{\ast}}$ and two even homomorphism $\alpha,\beta:\mathcal{A}\rightarrow\mathcal{A}$ such that for homogeneous elements $x,y,z$ we have
\begin{itemize}
  \item[] $\alpha \circ \beta = \beta \circ \alpha,$
  \item[] $\alpha ([x,y])=[\alpha(x),\alpha(y)],~~\beta ([x,y])=[\beta(x),\beta(y)],$
  \item[] $[\beta(x),\alpha(y)]=-\varepsilon(x,y)[\beta(y),\alpha(x)],$
  \item[] $\circlearrowleft_{x,y,z}\varepsilon(z,x)[\beta^2(x),[\beta(y),\alpha(z)]]= 0~~(\varepsilon\textrm{-BiHom-Jacobi\ condition})$
\end{itemize}
where $\circlearrowleft_{x,y,z}$ denotes summation over the cyclic permutation on $x,y,z$.
\end{df}

Obviously, a Hom-Lie colour algebra $(\mathcal{A},[.,.],\varepsilon,\alpha)$ is a particular case of BiHom-Lie colour algebra. Conversely a BiHom-Lie colour algebra $(\mathcal{A},[.,.],\varepsilon,\alpha,\alpha)$ with bijective $\alpha$ is a Hom-Lie colour algebra $(\mathcal{A},[.,.],\varepsilon,\alpha)$.
\begin{rem} A  Lie colour algebra $(\mathcal{A},[.,.],\varepsilon)$ is a Hom-Lie colour algebra with $\alpha=Id$, since the $\varepsilon$-Hom-Jacobi condition
reduces to the $\varepsilon$-Jacobi condition when $\alpha=Id$. If, in addition, $\varepsilon(x,y)=1$ or $\varepsilon(x,y)=(-1)^{|x||y|}$, then the BiHom-Lie colour algebra is  a classical Hom-Lie algebra or Hom-Lie superalgebra. Using  definitions of \cite{Gra-Mak-Men-Pan, Wang-Guo},  BiHom-Lie algebras and BiHom-Lie superalgebras
are also obtained when $\varepsilon(x,y)=1$ and $\varepsilon(x,y)=(-1)^{|x||y|}$ respectively.
\end{rem}

\begin{df}
\begin{enumerate}\item[]
\item A BiHom-Lie colour algebra $(\mathcal{A},[.,.],\varepsilon,\alpha,\beta)$ is multiplicative if $\alpha$ and $\beta$ are  even algebra morphisms, i.e.,
for any homogenous elements $x, y\in \mathcal{A}$, we have
$$
\alpha([x,y])=[\alpha(x),\alpha(y)]~~~~~~\text{and}~~~~\beta([x,y])=[\beta(x),\beta(y)].
$$
\item A BiHom-Lie colour algebra $(\mathcal{A},[.,.],\varepsilon,\alpha,\beta)$ is regular if $\alpha$ and $\beta$ are  even algebra automorphisms.
\end{enumerate}
\end{df}

We recall in the following the definition of BiHom-associative colour algebra which provide a different way for constructing BiHom-Lie colour algebra by extending the fundamental construction of  Lie colour algebras from associative colour algebra via commutator bracket multiplication.
\begin{df}\label{assoc} A \textit{BiHom-associative colour} algebra is a $5$-tuple $(\mathcal{A},\mu,\varepsilon,\alpha,\beta)$ consisting of a $\Gamma$-graded vector space $\mathcal{A}$, an even bilinear map $\mu:\mathcal{A}\times \mathcal{A}\rightarrow \mathcal{A}$ $($i.e. $\mu(\mathcal{A}_a,\mathcal{A}_b)\subset \mathcal{A}_{a+b})$ and two even homomorphisms $\alpha,\beta:\mathcal{A}\rightarrow \mathcal{A}$ such that
$\alpha\circ \beta=\beta \circ \alpha$
\begin{equation}\label{HACA}
    \alpha(x)(yz)=(xy)\beta(z).
\end{equation}
In the case where $xy=\varepsilon(x,y)yx$, the BiHom-associative colour algebra $(\mathcal{A},\mu,\alpha,\beta)$ is called commutative BiHom-associative colour algebra.
\end{df}
Obviously, a Hom-associative colour algebra $(\mathcal{A},\mu,\alpha)$ is a particular case of a BiHom-associative colour algebra,
namely $(\mathcal{A},\mu,\alpha,\alpha)$. Conversely, a BiHom-associative colour algebra $(\mathcal{A},\mu,\alpha,\alpha)$ with bijective $\alpha$ is
the Hom-associative colour algebra $(\mathcal{A},\mu,\alpha)$.
\begin{prop} Let $(\mathcal{A},\mu,\varepsilon,\alpha,\beta)$ be a BiHom-associative colour algebra defined on the vector space $\mathcal{A}$ by the multiplication $\mu$ and  two bijective homomorphisms $\alpha$ and $\beta$. Then the quadruple $(\mathcal{A},[.,.],\varepsilon,\alpha,\beta)$, where the bracket is defined for $x, y \in \mathcal{H}(\mathcal{A})$ by
$$[x,y]=xy-\varepsilon(x,y)(\alpha^{-1}\beta(y))(\alpha\beta^{-1}(x)),$$
is a BiHom-Lie colour algebra.
\end{prop}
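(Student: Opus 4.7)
The plan is to verify, one by one, the four axioms of a BiHom-Lie colour algebra for the bracket
$$[x,y]=xy-\varepsilon(x,y)\bigl(\alpha^{-1}\beta(y)\bigr)\bigl(\alpha\beta^{-1}(x)\bigr),$$
in order of increasing difficulty. First, the commutation $\alpha\circ\beta=\beta\circ\alpha$ is inherited from the BiHom-associative structure. Since $\alpha,\beta$ are bijective even morphisms for $\mu$, so are their inverses, and all four maps $\alpha,\beta,\alpha^{-1},\beta^{-1}$ commute pairwise. Hence $\alpha^{-1}\beta$ and $\alpha\beta^{-1}$ are mutually inverse even morphisms commuting with $\alpha$ and $\beta$, which makes multiplicativity of $\alpha$ and $\beta$ on the new bracket a one-line substitution.

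Next, for the $\varepsilon$-BiHom-skew-symmetry, substitute and simplify $\alpha^{-1}\beta(\alpha(y))=\beta(y)$, $\alpha\beta^{-1}(\beta(x))=\alpha(x)$ to obtain
$$[\beta(x),\alpha(y)]=\beta(x)\alpha(y)-\varepsilon(x,y)\,\beta(y)\alpha(x).$$
Running the same computation with $x,y$ swapped, multiplying by $-\varepsilon(x,y)$, and using $\varepsilon(x,y)\varepsilon(y,x)=1$ yields the required identity.

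The main step is the $\varepsilon$-BiHom-Jacobi identity. I would first expand the inner bracket $[\beta(y),\alpha(z)]$, then expand the outer bracket $[\beta^2(x),\,\cdot\,]$, producing four summands. In each of the two summands where $\beta^2(x)$ has been moved to the right (i.e.\ those coming from the second term of the outer bracket), the BiHom-associativity $(ab)\beta(c)=\alpha(a)(bc)$ is applied to rewrite the term in the normal form $\beta^2(\cdot)\bigl(\beta(\cdot)\alpha(\cdot)\bigr)$, with the three dots filled by a permutation of $(x,y,z)$. After this normalisation, the full cyclic sum
$$\circlearrowleft_{x,y,z}\varepsilon(z,x)\,[\beta^2(x),[\beta(y),\alpha(z)]]$$
becomes a linear combination of six basic monomials of type $\beta^2(a)(\beta(b)\alpha(c))$, each appearing with two scalar coefficients built from $\varepsilon$.

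The remaining step is purely bookkeeping: using $\varepsilon(u,v+w)=\varepsilon(u,v)\varepsilon(u,w)$ and $\varepsilon(u,v)\varepsilon(v,u)=1$, one checks that the two coefficients attached to each of the six monomials are opposite, so the cyclic sum vanishes identically. The main obstacle is precisely this bookkeeping in the Jacobi calculation, i.e.\ keeping track, for each of the twelve terms produced by expansion followed by cyclic permutation, of the correct $\varepsilon$-factor and of the basic monomial with which it must be paired; once BiHom-associativity has aligned every term to a common shape, the cancellation is forced by the bicharacter axioms alone.
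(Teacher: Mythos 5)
Your proposal is correct and follows essentially the same route as the paper: direct verification of multiplicativity and $\varepsilon$-BiHom-skew-symmetry by substitution, then expansion of the cyclic Jacobi sum into twelve terms and cancellation via the BiHom-associativity identity $\alpha(a)(bc)=(ab)\beta(c)$ together with the bicharacter relations. In fact your plan spells out the final step (normalising the right-handed terms to the shape $\beta^2(\cdot)(\beta(\cdot)\alpha(\cdot))$ and pairing coefficients) more explicitly than the paper, which compresses it into ``by the associativity''.
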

\begin{proof}First we check that the bracket product $[\cdot,\cdot]$ is compatible with the structure
maps $\alpha$ and $\beta$. For any homogeneous elements $x, y \in \mathcal{A}$, we have
\begin{eqnarray*}
[\alpha(x),\alpha(y)]&=& \alpha(x)\alpha(y)-\varepsilon(x,y)(\alpha^{-1}\beta(\alpha(y)))(\alpha\beta^{-1}(\alpha(x)))\\
&=& \alpha(x)\alpha(y)-\varepsilon(x,y)\beta(y)(\alpha^2\beta^{-1}(x))\\
&=& \alpha([x,y]).
\end{eqnarray*}
The second equality holds since $\alpha$ is even and $\alpha\circ \beta=\beta\circ \alpha$. Similarly, one can prove that
$\beta([x, y])= [\beta(x),\beta(y)]$.\\
It is easy to verify that $[\beta(x),\alpha(y)]=-\varepsilon(x,y)[\beta(y),\alpha(x)]$.\\
Now we prove the $\varepsilon$-BiHom-Jacobi condition. For any homogeneous elements $x, y,z \in \mathcal{A}$, we have
\begin{eqnarray*}
\varepsilon(z,x)[\beta^2(x),[\beta(y),\alpha(z)]]&=& \varepsilon(z,x)[\beta^2(x),\beta(y)\alpha(z)-\varepsilon(y,z)\alpha^{-1}\beta(\alpha(z))\alpha\beta^{-1}(\beta(y))]\\
&=&\varepsilon(z,x)[\beta^2(x),\beta(y)\alpha(z)]-\varepsilon(z,x)\varepsilon(y,z)[\beta^2(x),\alpha^{-1}\beta(\alpha(z))\alpha\beta^{-1}(\beta(y))]\\
&=& \varepsilon(z,x)\Big(\beta^2(x)(\beta(y)\alpha(z))-\varepsilon(x,y+z)(\alpha^{-1}(\beta^2(y))\beta(z))\alpha(\beta(x)) \Big)\\
&-& \varepsilon(z,x)\varepsilon(y,z)\Big( \beta^2(x)(\beta(z)\alpha(y))-\varepsilon(x,z+y)(\alpha^{-1}(\beta^2(z))\beta(y))\alpha(\beta(x)) \Big).
\end{eqnarray*}
Similarly, we have
\begin{eqnarray*}
\varepsilon(x,y)[\beta^2(y),[\beta(z),\alpha(x)]]
&=&\varepsilon(x,y)\Big(\beta^2(y)(\beta(z)\alpha(x))-\varepsilon(y,z+x)(\alpha^{-1}(\beta^2(z))\beta(x))\alpha(\beta(y)) \Big)\\
&-& \varepsilon(x,y)\varepsilon(z,x)\Big( \beta^2(y)(\beta(x)\alpha(z))-\varepsilon(y,z+x)(\alpha^{-1}(\beta^2(x))\beta(z))\alpha(\beta(y)) \Big).
\end{eqnarray*}
\begin{eqnarray*}
\varepsilon(y,z)[\beta^2(z),[\beta(x),\alpha(y)]]
&=& \varepsilon(y,z)\Big(\beta^2(z)(\beta(x)\alpha(y))-\varepsilon(z,x+y)(\alpha^{-1}(\beta^2(x))\beta(y))\alpha(\beta(z)) \Big)\\
&-& \varepsilon(y,z)\varepsilon(x,y)\Big( \beta^2(z)(\beta(y)\alpha(x))- \varepsilon(z,x+y)(\alpha^{-1}(\beta^2(y))\beta(x))\alpha(\beta(z)) \Big).
\end{eqnarray*}
By the associativity, we have
\begin{eqnarray*}
\circlearrowleft_{x,y,z}\varepsilon(z,x)[\beta^2(x),[\beta(y),\alpha(z)]]&=& 0.
\end{eqnarray*}
And this finishes the proof.
\end{proof}

The following theorem generalizes the result of \cite{yuan2010hom}. In the following,  starting from a BiHom-Lie colour algebra and an even Lie colour algebra endomorphism, we construct a new BiHom-Lie colour algebra. We say that it is obtained by twisting principle or composition method.

\begin{thm}\label{induced-color} Let $(\mathcal{A},[.,.],\varepsilon)$ be an ordinary Lie colour algebra and let $\alpha,\beta:\mathcal{A}\longrightarrow\mathcal{A}$ two commuting even linear maps such that $\alpha([x,y])=[\alpha(x),\alpha(y)]$ and $\beta([x,y])=[\beta(x),\beta(y)]$, for all $ x,y\in \mathcal{H}(\mathcal{A})$. Define the even linear map $\{\cdot,\cdot\}:\mathcal{A}\times \mathcal{A}\longrightarrow \mathcal{A}$
$$
  \{x,y\}=[\alpha(x),\beta(y)],~~\forall~~x,y\in \mathcal{H}(\mathcal{A}).
$$
Then $\mathcal{A}_{(\alpha,\beta)}=(\mathcal{A},\{\cdot,\cdot\},\varepsilon,\alpha,\beta)$ is a BiHom-Lie colour algebra.
\end{thm}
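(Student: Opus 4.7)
The plan is to verify the four axioms of a BiHom-Lie colour algebra for $\{x,y\}:=[\alpha(x),\beta(y)]$ by systematically pushing the twist maps through the original bracket, using only two ingredients: the commutation $\alpha\beta=\beta\alpha$, and the fact that both $\alpha$ and $\beta$ are even bracket-morphisms of the underlying Lie colour algebra $(\mathcal{A},[\cdot,\cdot],\varepsilon)$. The commutativity $\alpha\beta=\beta\alpha$ is assumed, so it only remains to check multiplicativity of $\alpha,\beta$ with respect to $\{\cdot,\cdot\}$, the $\varepsilon$-BiHom-skewsymmetry, and the $\varepsilon$-BiHom-Jacobi identity.

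For multiplicativity I would simply compute $\alpha(\{x,y\})=\alpha([\alpha(x),\beta(y)])=[\alpha^2(x),\alpha\beta(y)]=[\alpha^2(x),\beta\alpha(y)]=\{\alpha(x),\alpha(y)\}$, and likewise for $\beta$; the commutation $\alpha\beta=\beta\alpha$ is used once. For the BiHom-skewsymmetry, I would rewrite
\[
\{\beta(x),\alpha(y)\}=[\alpha\beta(x),\beta\alpha(y)]=[\alpha\beta(x),\alpha\beta(y)],
\]
apply the classical $\varepsilon$-skewsymmetry of $[\cdot,\cdot]$ (noting $\varepsilon(\alpha\beta(x),\alpha\beta(y))=\varepsilon(x,y)$ since $\alpha,\beta$ are even), and then fold the composition back as $[\alpha\beta(y),\alpha\beta(x)]=\{\beta(y),\alpha(x)\}$.

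The core step is the BiHom-Jacobi condition, and this is where the argument crystallizes. Using that $\beta$ is a morphism of $[\cdot,\cdot]$ and $\alpha\beta=\beta\alpha$, I first simplify the inner double bracket:
\[
\{\beta^{2}(x),\{\beta(y),\alpha(z)\}\}=[\alpha\beta^{2}(x),\beta[\alpha\beta(y),\alpha\beta(z)]]=[\alpha\beta^{2}(x),[\alpha\beta^{2}(y),\alpha\beta^{2}(z)]].
\]
Setting $X=\alpha\beta^{2}(x)$, $Y=\alpha\beta^{2}(y)$, $Z=\alpha\beta^{2}(z)$, the $\varepsilon$-BiHom-Jacobi sum $\circlearrowleft_{x,y,z}\varepsilon(z,x)\{\beta^{2}(x),\{\beta(y),\alpha(z)\}\}$ reduces to $\circlearrowleft_{X,Y,Z}\varepsilon(Z,X)[X,[Y,Z]]$, since the evenness of $\alpha,\beta$ gives $\varepsilon(Z,X)=\varepsilon(z,x)$. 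This last expression vanishes by the ordinary $\varepsilon$-Jacobi identity in $(\mathcal{A},[\cdot,\cdot],\varepsilon)$.

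The only real obstacle is the bookkeeping in the Jacobi step: one must be careful to apply $\beta$ as a bracket-morphism to the inner $\{\beta(y),\alpha(z)\}=[\alpha\beta(y),\beta\alpha(z)]$ and to move every occurrence of $\alpha$ past $\beta$ at the right moment so that all arguments land uniformly at $\alpha\beta^{2}$. Once this is done, the reduction to the classical Jacobi identity is automatic, and no further subtleties arise.
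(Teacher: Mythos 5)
Your proposal is correct and follows essentially the same route as the paper: reduce $\{\beta^{2}(x),\{\beta(y),\alpha(z)\}\}$ to $[\alpha\beta^{2}(x),[\alpha\beta^{2}(y),\alpha\beta^{2}(z)]]$ using that $\alpha,\beta$ are commuting even bracket-morphisms, and conclude by the classical $\varepsilon$-Jacobi identity, with skewsymmetry handled the same way. The only difference is that you also spell out the multiplicativity and skewsymmetry checks that the paper dismisses as obvious, which is fine.
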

\begin{proof} Obviously $\{.,.\}$ is BiHom-$\varepsilon$-skewsymmetric. Furthermore $(\mathcal{A},\{.,.\},\varepsilon,\alpha,\beta)$ satisfies the $\varepsilon$-BiHom-Jacobi condition. Indeed  
\begin{eqnarray*}
   \circlearrowleft_{x,y,z}\varepsilon(z,x)\{\beta^2(x),\{\beta(y),\alpha(z)\}\} &=&
   \circlearrowleft_{x,y,z}\varepsilon(z,x)\{\beta^2(x),[\alpha\beta(y),\beta\alpha(z)]\} \\
   &=& \circlearrowleft_{x,y,z}\varepsilon(z,x)[\alpha\beta^2(x),[\alpha\beta^2(y),\alpha\beta^2(z)]] \\
   &=&  0.
\end{eqnarray*}
\end{proof}

\textbf{Claim:} More generally, let $(\mathcal{A},[.,.],\varepsilon,\alpha,\beta)$ be a BiHom-Lie colour algebra and
$\alpha^{'},\beta^{'}:\mathcal{A}\longrightarrow \mathcal{A}$ even linear maps such that $\alpha^{'}([x,y])=[\alpha^{'}(x),\alpha^{'}(y)]$ and $\beta^{'}([x,y])=[\beta^{'}(x),\beta^{'}(y)],$ for all $x,y\in \mathcal{H}(\mathcal{A})$, and any two of the maps $\alpha,\beta,\alpha^{'},\beta^{'}$ commute. Then $(\mathcal{A},[\cdot,\cdot]_{(\alpha^{'},\beta^{'})}=[\cdot,\cdot]\circ (\alpha^{'} \otimes \beta^{'}),\varepsilon,\alpha \circ \alpha^{'},\beta\circ \beta^{'})$ is a BiHom-Lie colour algebra.
\begin{rem}Let $(\mathcal{A},[.,.],\varepsilon)$ be a Lie colour algebra and $\alpha$ be a  Lie colour algebra morphism, then $(\mathcal{A},[.,.]_{\alpha}=\alpha \circ [.,.],\varepsilon,\alpha)$ is a multiplicative  Hom-Lie colour algebra.
\end{rem}

\begin{exa}We construct an example of a BiHom-Lie colour algebra, which is not a Lie colour algebra
starting from the orthosymplectic Lie superalgebra. We consider in the sequel the matrix realization
of this Lie superalgebra.\\
 Let $osp(1, 2) = \mathcal{A}_0 \oplus \mathcal{A}_1$ be the Lie superalgebra where $\mathcal{A}_0$ is spanned by
\begin{eqnarray*}
&& H=\left(
  \begin{array}{ccc}
    1 & 0 & 0 \\
    0 & 0 & 0 \\
    0 & 0 & -1 \\
  \end{array}
\right),~~X=\left(
  \begin{array}{ccc}
    0 & 0 & 1 \\
    0 & 0 & 0 \\
    0 & 0 & 0 \\
  \end{array}
\right),~~Y=\left(
  \begin{array}{ccc}
    0 & 0 & 0 \\
    0 & 0 & 0 \\
    1 & 0 & 0 \\
  \end{array}
\right).
\end{eqnarray*}
and $\mathcal{A}_1$ is spanned by
\begin{eqnarray*}
&& F=\left(
  \begin{array}{ccc}
    0 & 0 & 0 \\
    1 & 0 & 0 \\
    0 & 1 & 0 \\
  \end{array}
\right),~~G=\left(
  \begin{array}{ccc}
    0 & 1 & 0 \\
    0 & 0 & -1 \\
    0 & 0 & 0 \\
  \end{array}
\right).
\end{eqnarray*}
The defining relations $($we give only the ones with non-zero values in the right-hand side$)$ are
\begin{eqnarray*}
&& [H,X]= 2X,\hskip1cm[H,Y] = -2Y,\hskip1cm [X,Y] = H,\\
&& [Y,G]= F,\hskip1cm[X,F] = G,\hskip1cm [H,F]=-F,\hskip1cm [H,G] = G,\\
&& [G,F]= H,\hskip1cm [G,G]=-2X,\hskip1cm [F,F]= 2Y.
\end{eqnarray*}
Let $\lambda,\kappa \in \mathbb{R}^\ast$, we consider the linear maps $\alpha_\lambda: osp(1,2)\longrightarrow osp(1,2)$ and $\beta_\kappa: osp(1,2)\longrightarrow osp(1,2)$ defined by
\begin{eqnarray*}
\alpha_\lambda(X)&=& \lambda^2 X,~~\alpha_\lambda(Y)=\frac{1}{\lambda^2}Y,~~\alpha_\lambda(H)=H,~~\alpha_\lambda(F)=\frac{1}{\lambda}F,~~ \alpha_\lambda(G)=\lambda G,\\
\beta_\kappa(X)&=& \kappa^2 X,~~\beta_\kappa(Y)=\frac{1}{\kappa^2}Y,~~\beta_\kappa(H)=H,~~\beta_\kappa(F)=\frac{1}{\kappa}F,~~ \beta_\kappa(G)=\kappa G.
\end{eqnarray*}
Obviously, we have $\alpha_\lambda \circ \beta_\kappa=\beta_\kappa \circ \alpha_\lambda$. For all $H,X,Y,F$ and $G$ in $osp(1,2)$, we have
\begin{eqnarray*}
\alpha_\lambda([H,X])&=& \alpha_\lambda(2X)=2 \lambda^2 X,~~
\alpha_\lambda([H,Y])= \alpha_\lambda(-2Y)=-2\frac{1}{\lambda^2}Y,\\
\alpha_\lambda([X,Y])&=& \alpha_\lambda(H)=H,~~
\alpha_\lambda([Y,G])= \alpha_\lambda(F)=\frac{1}{\lambda}F,\\
\alpha_\lambda([X,F])&=& \alpha_\lambda(G)=\lambda G,~~
\alpha_\lambda([H,F])= \alpha_\lambda(-F)=-\frac{1}{\lambda}F,\\
\alpha_\lambda([H,G])&=& \alpha_\lambda(G)=\lambda G,~~
\alpha_\lambda([G,F])= \alpha_\lambda(H)=H,\\
\alpha_\lambda([G,G])&=& \alpha_\lambda(-2X)=-2\lambda^2 X,~~
\alpha_\lambda([F,F])= \alpha_\lambda(2Y)=2\frac{1}{\lambda^2}Y.
\end{eqnarray*}
On the other hand, we have
\begin{eqnarray*}
&&[\alpha_\lambda(H),\alpha_\lambda(X)]=[H,\lambda^2 X]=2\lambda^2 X,~~
[\alpha_\lambda(H),\alpha_\lambda(Y)]= [H,\frac{1}{\lambda^2}Y]=-2\frac{1}{\lambda^2}Y,\\
&&[\alpha_\lambda(X),\alpha_\lambda(Y)]=[\lambda^2 X,\frac{1}{\lambda^2}Y]=H,~~
[\alpha_\lambda(Y),\alpha_\lambda(G)]=[\frac{1}{\lambda^2}Y,\lambda G]=\frac{1}{\lambda}F,\\
&&[\alpha_\lambda(X),\alpha_\lambda(F)]=[\lambda^2 X,\frac{1}{\lambda}F]=\lambda G,~~
[\alpha_\lambda(H),\alpha_\lambda(F)]=[H,\frac{1}{\lambda}F]=-\frac{1}{\lambda}F,\\
&&[\alpha_\lambda(H),\alpha_\lambda(G)]=[H,\lambda G]=\lambda G,~~
[\alpha_\lambda(G),\alpha_\lambda(F)]= [\lambda G,\frac{1}{\lambda}F]=H,\\
&&[\alpha_\lambda(G),\alpha_\lambda(G)]=[\lambda G,\lambda G]=-2\lambda^2 X,~~
[\alpha_\lambda(F),\alpha_\lambda(F)]= [\frac{1}{\lambda}F,\frac{1}{\lambda}F]=2\frac{1}{\lambda^2}Y.
\end{eqnarray*}
Therefore, for $a,a^{'}\in osp(1,2)$, we have
\begin{eqnarray*}
\alpha_\lambda([a,a^{'}])&=& [\alpha_\lambda(a),\alpha_\lambda(a^{'})].
\end{eqnarray*}
Similarly, we have
\begin{eqnarray*}
\beta_\kappa([a,a^{'}])&=& [\beta_\kappa(a),\beta_\kappa(a^{'})].
\end{eqnarray*}
Applying Theorem \eqref{induced-color} we obtain a family of BiHom-Lie colour algebras $osp(1,2)_{\alpha_\lambda,\beta_\kappa} = \Big(osp(1,2),\{\cdot,\cdot\}=[\cdot,\cdot]\circ (\alpha_\lambda \otimes \beta_\kappa),\alpha_\lambda,\beta_\kappa \Big)$ where the BiHom-Lie colour algebra bracket $\{\cdot,\cdot\}$
 on the basis elements is given, for $\lambda,\kappa \neq 0$, by
\begin{eqnarray*}
&& \{H,X\}= 2\kappa^{2}X,\hskip1cm \{H,Y\} = \frac{-2}{\kappa^{2}}Y,\hskip1cm \{X,Y\} =(\frac{\lambda}{\kappa})^2 H,\\
&& \{Y,G\}=\frac{\kappa}{\lambda^2} F,\hskip1cm \{X,F\} =\frac{\lambda^2}{\kappa} G,\hskip1cm \{H,F\}=-\frac{1}{\kappa}F,\hskip1cm \{H,G\} =\kappa G,\\
&& \{G,F\}=\frac{\lambda}{\kappa} H,\hskip1cm \{G,G\}=-2\lambda \kappa X,\hskip1cm\{F,F\}= 2\frac{\lambda}{\kappa}Y.
\end{eqnarray*}
These BiHom-Lie colour algebras are not Hom-Lie colour algebras for $\lambda\neq 1$.
Indeed, the left-hand side of the $\varepsilon$-Hom-Jacobi identity, for $\beta_\kappa= id$, leads to
\begin{eqnarray*}
\{\alpha_\lambda(X),\{Y,H\}\}-\{\alpha_\lambda(H),\{X,Y\}\}+\{\alpha_\lambda(Y),\{H,X\}\}&=& 2\frac{\lambda^{6}-1}{\lambda^4}H,
\end{eqnarray*}
and also
\begin{eqnarray*}
\{\alpha_\lambda(H),\{F,F\}\}-\{\alpha_\lambda(F),\{H,F\}\}+\{\alpha_\lambda(F),\{F,H\}\}&=& 4\frac{1-\lambda}{\lambda^2}Y,
\end{eqnarray*}
Then, they do not vanish for $\lambda \neq 1$.
\end{exa}


\begin{exa}
Let $(A,[~,~]_A,\varepsilon, \alpha,\beta)$ be a BiHom-Lie colour algebra. Then the vector space $A':=A\otimes\mathbb{K}[t,t^{-1}]$ can be considered as the algebra of Laurent polynomials
with coefficients in the BiHom-Lie colour algebra $A$. Note that $A'$ can be endowed with
a natural $\Gamma$-grading as follows: an element $x\in A'$ is said to be homogeneous of
degree $a\in  \Gamma$, if there exist an element $x_a\in A$ with degree $a$ and $f(t)\in\mathbb{K}[t,t^{-1}]$, such that $x=x_a\otimes f(t)$. Put $\alpha'=\alpha\otimes Id_A$,  $\beta'=\beta\otimes Id_A$ and define an even bilinear multiplication $[~,~]_{A'}$ on $A'$ by
$$
[x\otimes f(t),y\otimes g(t)]=[x,y]\otimes f(t)g(t)
$$
for all $x, y\in H(A)$ and $f(t), g(t)\in \mathbb{K}[t,t^{-1}]$. Then $(A',[~,~]_{A'},\varepsilon,\alpha',\beta')$ is a BiHom-Lie colour algebra. Indeed, For any homogeneous elements
$x, y, z\in A$, and $f(t), g(t), h(t)\in \mathbb{K}[t,t^{-1}]$, we have
\begin{eqnarray*}
[\beta'(x\otimes f(t)),\alpha'(y\otimes g(t))]
 &=& [\beta(x)\otimes f(t),\alpha(y)\otimes g(t)] \\
   &=&  [\beta(x),\alpha(y)]\otimes f(t)g(t)\\
   &=&-\varepsilon(x,y)[\beta(y),\alpha(x)]\otimes f(t)g(t)=-[\beta(x),\alpha(x)]\otimes f(t)g(t)\\
   &=&[\beta'(y\otimes g(t)),\alpha'(x\otimes f(t))]
\end{eqnarray*}
and
\begin{eqnarray*}
   \circlearrowleft_{x,y,z}\varepsilon(z,x)[\beta'^2(x\otimes f(t)),[\beta'(y\otimes g(t)),\alpha'(z\otimes h(t))]] &&  \\
 = \circlearrowleft_{x,y,z}\varepsilon(z,x)[\beta^2(x)\otimes f(t),[\beta(y)\otimes g(t),\alpha(z)\otimes h(t)]] &&\\
 =\circlearrowleft_{x,y,z}\varepsilon(z,x)[\beta^2(x),[\beta(y),\alpha(z)]]\otimes f(t)g(t)h(t)=0,
\end{eqnarray*}
since  $(A,[~,~]_A,\varepsilon, \alpha,\beta)$ is a BiHom-Lie colour algebra.
\end{exa}

\section{$\sigma$-Twists of BiHom-Lie colour  algebras}
In this section, we shall give a close relationship   between BiHom-Lie colour algebras corresponding to different form $\sigma$ on $\Gamma$.

Let $(\mathcal A, [~,~], \varepsilon,\alpha,\beta)$ be a BiHom-Lie colour algebra. Given any mapping $\sigma:\Gamma\times \Gamma\rightarrow \mathbb{K^{\ast}}$, we define on the $\Gamma$-graded vector space $\mathcal A$ a new multiplication $[~,~]^\sigma$ by the requirement that
\begin{equation}\label{sigma twist1}
    [x,y]^\sigma=\sigma(x,y)[x,y],
\end{equation}
for all the homogenous elements $x$,$y$ in $\mathcal A$.  The $\Gamma$-graded vector space $\mathcal A$, endowed with the multiplication $[~,~]^\sigma$, is a $\Gamma$-graded algebra that will be called a $\sigma-$twist and will be denoted by $\mathcal A^\sigma$. We will looking for conditions on $\sigma$, which  ensure that $(\mathcal A^\sigma, [~,~]^\sigma,\varepsilon, \alpha,\beta)$ is also a  BiHom-Lie colour algebra.

The bilinear mapping $[~,~]^\sigma$  is a $\varepsilon$-skewsymmetric if and only if
\begin{enumerate}
\item $\sigma$ is symmetric, i.e. $\sigma(x,y)=\sigma(y,x)$, for any $x,y\in\Gamma$.
\item Furthermore, the product $[~,~]^\sigma$ satisfies the $\varepsilon$-BiHom-Jacobi\ condition if and only if
$\sigma(x,y)\sigma(z,x+y)$  is invariant under cyclic permutations of $x,y,z\in\Gamma$.
\end{enumerate}
We call such a mapping $\sigma:\Gamma\times \Gamma\rightarrow \mathbb{K^{\ast}}$ satisfying both $(1)$ et $(2)$  a symmetric multiplier on $\Gamma$. Then we have:
\begin{prop}
With the above notations. Let $(\mathcal A,[~,~],\varepsilon, \alpha,\beta)$ be a BiHom-Lie colour algebra and suppose that $\sigma$ is a symmetric multiplier on $\Gamma$. Then the $\sigma$-twist $(\mathcal A^\sigma, [~,~]^\sigma,\varepsilon, \alpha,\beta)$ is also a BiHom-Lie colour algebra under the same twisting $\varepsilon$.
\end{prop}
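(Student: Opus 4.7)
The plan is to check the four axioms of a BiHom-Lie colour algebra in turn for $(\mathcal{A}^\sigma,[\cdot,\cdot]^\sigma,\varepsilon,\alpha,\beta)$, exploiting the fact that the structure maps $\alpha,\beta$ are \emph{unchanged} from the original BiHom-Lie colour algebra. Since the commutation $\alpha\circ\beta=\beta\circ\alpha$ only involves $\alpha,\beta$ and not the bracket, it is inherited for free. For the multiplicativity $\alpha([x,y]^\sigma)=[\alpha(x),\alpha(y)]^\sigma$, I would use that $\alpha$ is even so it preserves degrees: hence $\sigma(\alpha(x),\alpha(y))=\sigma(x,y)$, and multiplicativity of $\alpha$ for $[\cdot,\cdot]$ then pushes through the scalar $\sigma(x,y)$. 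The same argument handles $\beta$.

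For $\varepsilon$-skewsymmetry in the BiHom form, I would compute directly for homogeneous $x,y$:
\begin{equation*}
[\beta(x),\alpha(y)]^\sigma=\sigma(x,y)[\beta(x),\alpha(y)]=-\sigma(x,y)\varepsilon(x,y)[\beta(y),\alpha(x)],
\end{equation*}
and then use the symmetry of $\sigma$ (condition (1)) to rewrite $\sigma(x,y)=\sigma(y,x)$, recovering $-\varepsilon(x,y)[\beta(y),\alpha(x)]^\sigma$. This is exactly the discussion preceding the proposition, so it is only a matter of transcribing it.

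The only nontrivial step is the $\varepsilon$-BiHom-Jacobi condition, which will be the main obstacle. For homogeneous $x,y,z$, since $\alpha,\beta$ are even, the inner bracket $[\beta(y),\alpha(z)]$ has degree $y+z$, so
\begin{equation*}
[\beta^2(x),[\beta(y),\alpha(z)]^\sigma]^\sigma=\sigma(x,y+z)\,\sigma(y,z)\,[\beta^2(x),[\beta(y),\alpha(z)]].
\end{equation*}
By condition (2), the scalar $\sigma(y,z)\sigma(x,y+z)$ is invariant under cyclic permutation of $(x,y,z)$; denote this common value by $C$. Then
\begin{equation*}
\circlearrowleft_{x,y,z}\varepsilon(z,x)\,[\beta^2(x),[\beta(y),\alpha(z)]^\sigma]^\sigma = C\,\circlearrowleft_{x,y,z}\varepsilon(z,x)\,[\beta^2(x),[\beta(y),\alpha(z)]] = 0,
\end{equation*}
where the last equality is the $\varepsilon$-BiHom-Jacobi identity for the original bracket. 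This closes the proof.

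In short, everything reduces to: (i) symmetry of $\sigma$ turns $\varepsilon$-skewsymmetry of $[\cdot,\cdot]$ into $\varepsilon$-skewsymmetry of $[\cdot,\cdot]^\sigma$; (ii) the cocycle-like condition (2) factors a common scalar out of the cyclic Jacobi sum; and (iii) since $\alpha,\beta$ are even and unchanged, the compatibility with the twisted bracket is automatic. The only place where care is needed is tracking degrees correctly inside the nested bracket so that the scalar $\sigma(x,y+z)\sigma(y,z)$ really is the one that (2) certifies to be cyclically invariant.
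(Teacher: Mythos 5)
Your proposal is correct and follows essentially the same route as the paper's proof: evenness of $\alpha,\beta$ gives compatibility with the twisted bracket, symmetry of $\sigma$ gives the BiHom-$\varepsilon$-skewsymmetry, and the cyclic invariance of $\sigma(y,z)\sigma(x,y+z)$ (condition (2)) lets the common scalar factor out of the cyclic sum so that the $\varepsilon$-BiHom-Jacobi condition for $[\cdot,\cdot]^\sigma$ reduces to that for $[\cdot,\cdot]$. No gaps; your degree-tracking inside the nested bracket matches the paper's computation.
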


\begin{proof}
 For any homogeneous elements $x, y \in \mathcal{A}$, we have
\begin{eqnarray*}
[\alpha(x),\alpha(y)]^\sigma&=& \sigma(\alpha(x),\alpha(y))[\alpha(x),\alpha(y)]\\
&=& \sigma(x,y)\alpha([x,y])\\
&=& \alpha([x,y]^\sigma).
\end{eqnarray*}
Similarly, one can prove that
$\beta([x, y]^\sigma)= [\beta(x),\beta(y)]^\sigma$.\\
Since $\sigma$ is symmetric, we have
$$
[\beta(x),\alpha(y)]^\sigma=\sigma(x,y)[\beta(x),\alpha(y)]=-\sigma(y,x)\varepsilon(x,y)[\beta(y),\alpha(x)]=-\varepsilon(x,y)[\beta(y),\alpha(x)]^\sigma.
$$
Now we prove the $\varepsilon$-BiHom-Jacobi condition. For any homogeneous elements $x, y \in \mathcal{A}$, we have
\begin{eqnarray*}
\varepsilon(z,x)[\beta^2(x),[\beta(y),\alpha(z)]^\sigma]^\sigma&=& \varepsilon(z,x)[\beta^2(x),\sigma(y,z)[\beta(y),\alpha(z)]]^\sigma\\
&=&\varepsilon(z,x)\sigma(y,z)\sigma(x,y+z)[\beta^2(x),[\beta(y),\alpha(z)]].
\end{eqnarray*}
Using (2), we have $\circlearrowleft_{x,y,z}\varepsilon(z,x)[\beta^2(x),[\beta(y),\alpha(z)]^\sigma]^\sigma=0$.
\end{proof}

\begin{cor}
Let $(\mathcal A',[~,~]',\varepsilon',\alpha,\beta)$ be a second BiHom-Lie colour algebra and $\sigma$ be a symmetric multiplier on $\Gamma$. Let $f:\mathcal A\rightarrow \mathcal A'$ be a homomorphism of BiHom-Lie colour algebra, so $f$ is also a homomorphism of BiHom-Lie colour algebras $(\mathcal A^\sigma, [~,~]^\sigma,\varepsilon, \alpha,\beta)$ into $(\mathcal A'^\sigma, [~,~]'^\sigma,\varepsilon', \alpha,\beta)$.
\end{cor}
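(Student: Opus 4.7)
The plan is to observe that twisting the bracket by $\sigma$ only modifies the multiplication, while the hypotheses on a BiHom-Lie colour algebra homomorphism $f$ split into two parts: compatibility with the structure maps $\alpha,\beta$ (which is bracket-free) and compatibility with the bracket. The first part transfers verbatim to the $\sigma$-twist because $\alpha$ and $\beta$ are unchanged under twisting. So the whole content of the proof is to check that $f$ intertwines $[\cdot,\cdot]^\sigma$ and $[\cdot,\cdot]'^\sigma$, and the key ingredient enabling this is that $f$ is even.

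Concretely, I would first recall from the hypothesis that, for homogeneous $x,y\in\mathcal{A}$, one has $f\circ\alpha=\alpha\circ f$, $f\circ\beta=\beta\circ f$, and $f([x,y])=[f(x),f(y)]'$. The first two identities immediately give the corresponding compatibilities with the structure maps of the $\sigma$-twisted algebras, since the twist leaves $\alpha$ and $\beta$ intact. Next, since $f$ is an even map of $\Gamma$-graded vector spaces, we have $\overline{f(x)}=\overline{x}$ and $\overline{f(y)}=\overline{y}$, so $\sigma(f(x),f(y))=\sigma(x,y)$.

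The main (and essentially only) computation then reads
\begin{eqnarray*}
f\bigl([x,y]^\sigma\bigr)
&=& f\bigl(\sigma(x,y)[x,y]\bigr)
\;=\; \sigma(x,y)\,f([x,y]) \\
&=& \sigma(x,y)\,[f(x),f(y)]'
\;=\; \sigma(f(x),f(y))\,[f(x),f(y)]' \\
&=& [f(x),f(y)]'^{\sigma},
\end{eqnarray*}
which is exactly the bracket compatibility in the twisted setting. There is no real obstacle here; the only subtle point to flag is that one needs $f$ to be even in order to identify $\sigma(x,y)$ with $\sigma(f(x),f(y))$, and this is automatic from the definition of a homomorphism of $\Gamma$-graded algebras. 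Extending from homogeneous elements to arbitrary elements is done by bilinearity. This shows that $f:(\mathcal{A}^\sigma,[\cdot,\cdot]^\sigma,\varepsilon,\alpha,\beta)\to(\mathcal{A}'^\sigma,[\cdot,\cdot]'^\sigma,\varepsilon',\alpha,\beta)$ is a homomorphism of BiHom-Lie colour algebras, completing the proof.
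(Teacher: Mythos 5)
Your proof is correct and follows essentially the same route as the paper: use $f\circ\alpha=\alpha\circ f$, $f\circ\beta=\beta\circ f$, and the one-line computation $f([x,y]^\sigma)=\sigma(x,y)f([x,y])=\sigma(x,y)[f(x),f(y)]'=[f(x),f(y)]'^\sigma$. Your explicit remark that evenness of $f$ guarantees $\sigma(f(x),f(y))=\sigma(x,y)$ is a point the paper leaves implicit, but it does not change the argument.
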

\begin{proof}
We have $f\circ \alpha=\alpha\circ f$ and $f\circ \beta=\beta\circ f$. For any homogenous element $x,y\in\mathcal A$, we have
\begin{eqnarray*}
  f([x,y]^\sigma) = f(\sigma(x,y)[x,y])
   = \sigma(x,y) f([x,y]) =\sigma(x,y)[f(x),f(y)]'
   =[f(x),f(y)]'^\sigma.
\end{eqnarray*}

\end{proof}

\begin{rem}
It is easy to construct a large class of symmetric multipliers on $\Gamma$ as follows. Let $\omega$ be an arbitrary mapping of $\Gamma$. Then the map $\tau:\Gamma\times\Gamma\rightarrow\mathbb{K^{\ast}}$ defined by
$$
\tau(x,y)=\omega(x+y)\omega(x)^{-1}\omega(y)^{-1}, \forall x,y\in \Gamma,
$$
is a symmetric multiplier on $\Gamma$.

Let  $\sigma:\Gamma\times\Gamma\rightarrow\mathbb{K^{\ast}}$ be a map endowing $\mathcal A$ with a new
multiplication defined by \eqref{sigma twist1}, we define a mapping $\delta:\Gamma\times\Gamma\rightarrow\mathbb{K^{\ast}}$ by
\begin{equation}\label{sigma twist2}
    \delta(x,y)=\sigma(x,y)\sigma(y,x)^{-1},~~\forall x,y\in \Gamma.
\end{equation}
Then it follows that
\begin{equation}\label{sigma twist3}
    [x,y]^\sigma=-\varepsilon(x,y)\delta(x,y)[y,x]^\sigma
\end{equation}
for any homogenous $x, y\in \mathcal A$.

In \cite{scheunert1979generalized}, M. Scheunert provided the necessary and sufficient condition on $\sigma$  which ensure that $\varepsilon\delta$ is a bicharacter on $\Gamma$ where $\varepsilon\delta(x,y)=\varepsilon(x,y)\delta(x,y)$ for all $x,y\in\Gamma$. It turns out that  $\mathcal A^\sigma$ is a $\Gamma$-graded $\varepsilon\delta$-Lie algebra if and only if
\begin{equation}\label{sigma twist4}
    \sigma(x,y+z)\sigma(y,z)=\sigma(x,y)\sigma(x+y,z),~~\forall x,y,z\in \Gamma.
\end{equation}
\end{rem}

For any multiplier $\sigma$, the mapping $\delta$ defined by Equation \eqref{sigma twist2} is a bicharacter on $\Gamma$, which is said to be associated with $\sigma$. Note that $\delta(x,x)=1$ and it follows that from equation \eqref{sigma twist4} that $\sigma(0,x)=\sigma(x,0)\sigma(0,0)$ for all $x\in \Gamma$.

\begin{prop}

 Let $(\mathcal A, [~,~],\varepsilon,\alpha,\beta)$ be a BiHom-Lie colour algebra. Let $\delta$ be the bicharacter on $\Gamma$ associated with a given multiplier $\sigma$. Then $(\mathcal A^\sigma, [~,~]^\sigma,\varepsilon\delta,\alpha,\beta)$ is a BiHom-Lie colour algebra.
\end{prop}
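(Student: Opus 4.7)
My plan is to verify the BiHom-Lie colour algebra axioms for $(\mathcal{A}^\sigma,[\cdot,\cdot]^\sigma,\varepsilon\delta,\alpha,\beta)$ one by one. The commutation $\alpha\circ\beta=\beta\circ\alpha$ is inherited from the original structure, and $\varepsilon\delta$ is a skewsymmetric bicharacter on $\Gamma$: $\delta(x,y)\delta(y,x)=1$ is immediate from the definition of $\delta$, and the bimultiplicativity of $\delta$—hence of $\varepsilon\delta$—is exactly the content of the Scheunert condition \eqref{sigma twist4}, as already noted after the remark. Compatibility of $\alpha$ with $[\cdot,\cdot]^\sigma$ follows at once from $\alpha$ being even: $\alpha([x,y]^\sigma)=\sigma(x,y)\alpha([x,y])=\sigma(x,y)[\alpha(x),\alpha(y)]=[\alpha(x),\alpha(y)]^\sigma$, and the analogous identity holds for $\beta$.

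For the $\varepsilon\delta$-skewsymmetry I would simply expand both sides: on the one hand $[\beta(x),\alpha(y)]^\sigma=\sigma(x,y)[\beta(x),\alpha(y)]=-\sigma(x,y)\varepsilon(x,y)[\beta(y),\alpha(x)]$, and on the other
$$-\varepsilon\delta(x,y)[\beta(y),\alpha(x)]^\sigma=-\varepsilon(x,y)\sigma(x,y)\sigma(y,x)^{-1}\sigma(y,x)[\beta(y),\alpha(x)];$$
the factors of $\sigma(y,x)$ cancel and the two expressions agree.

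The main step—and the only substantive difficulty—is the $\varepsilon\delta$-BiHom-Jacobi identity. Pulling scalars out of the nested brackets yields
$$[\beta^2(x),[\beta(y),\alpha(z)]^\sigma]^\sigma=\sigma(y,z)\sigma(x,y+z)\,[\beta^2(x),[\beta(y),\alpha(z)]],$$
so the condition to verify becomes $\circlearrowleft_{x,y,z}\varepsilon(z,x)\,C(x,y,z)\,[\beta^2(x),[\beta(y),\alpha(z)]]=0$, where $C(x,y,z):=\delta(z,x)\sigma(y,z)\sigma(x,y+z)$. Since the original $\varepsilon$-BiHom-Jacobi identity is available, it suffices to show that $C$ is invariant under the cyclic permutation $(x,y,z)\mapsto(y,z,x)$, so that $C$ can be factored out of the sum and the identity reduces to the original one.

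Establishing $C(x,y,z)=C(y,z,x)$ is the core calculation and the main obstacle. I would first use \eqref{sigma twist4} with arguments $(x,y,z)$ to rewrite $C(x,y,z)=\delta(z,x)\sigma(x,y)\sigma(x+y,z)$, then substitute $\delta(u,v)=\sigma(u,v)\sigma(v,u)^{-1}$ in the ratio $C(x,y,z)/C(y,z,x)$, and finally apply \eqref{sigma twist4} twice more: once in the form $\sigma(y,z)\sigma(y+z,x)=\sigma(y,z+x)\sigma(z,x)$ (arguments $(y,z,x)$), and once as $\sigma(y,x)\sigma(x+y,z)=\sigma(y,z+x)\sigma(x,z)$ (arguments $(y,x,z)$, using commutativity of addition in $\Gamma$). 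After the resulting cancellations the ratio collapses to $1$, giving $C(x,y,z)=C(y,z,x)=C(z,x,y)$, which completes the argument.
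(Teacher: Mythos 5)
Your proof is correct and follows essentially the same route as the paper: pull the scalar factor $\delta(z,x)\sigma(y,z)\sigma(x,y+z)$ out of each term of the Jacobi sum, observe that it is invariant under cyclic permutation of $(x,y,z)$, and thereby reduce the $\varepsilon\delta$-BiHom-Jacobi condition to the original $\varepsilon$-BiHom-Jacobi identity, with the skewsymmetry and compatibility with $\alpha,\beta$ checked directly. The only difference is one of care: the paper simply asserts that the common factor is $\sigma(x,y)\sigma(y,z)\sigma(z,x)$, whereas you prove the needed cyclic invariance directly from the cocycle identity \eqref{sigma twist4}, which is a more rigorous justification of the same step.
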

\begin{proof}
Using  Equation \eqref{sigma twist4} and the fact that $\sigma$ is symmetric, it follows that
$$
[\beta(x),\alpha(y)]^\sigma=-\varepsilon(x,y)\sigma(x,y)[\beta(y),\alpha(x)]^\sigma.
$$
Now for any homogenous elements $x,y,z\in\mathcal A$, one has
$$
\circlearrowleft_{x,y,z}\varepsilon\delta(z,x)[\beta^2(x),[\beta(y),\alpha(z)]^\sigma]^\sigma=\sigma(x,y)\sigma(y,z)\sigma(z,x)\circlearrowleft_{x,y,z}\varepsilon(z,x)[\beta^2(x),[\beta(y),\alpha(z)]]=0.
$$
Then  $(\mathcal A^\sigma, [~,~]^\sigma,\varepsilon\delta,\alpha,\beta)$ is a BiHom-Lie colour algebra.
\end{proof}
\begin{cor}

 Let $(\mathcal A', [~,~]',\varepsilon,\alpha,\beta)$ be a second BiHom-Lie colour algebra. Suppose we are given a multiplier $\sigma$ on $\Gamma$; let $\delta$ be the bicharacter on $\Gamma$  associated with it. If $f:\mathcal A\rightarrow\mathcal A'$ is a homomorphism of BiHom-Lie colour algebras, then $f$ is also a homomorphism of BiHom-Lie colour algebra $(\mathcal A^\sigma, [~,~]^\sigma,\varepsilon\delta,\alpha,\beta)$ into $(\mathcal A'^\sigma, [~,~]'^\sigma,\varepsilon\delta,\alpha,\beta)$.
\end{cor}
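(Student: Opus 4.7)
The plan is to mimic the proof of the earlier corollary (for the $\varepsilon$-twist), since the statement differs only in that the bicharacter on the target BiHom-Lie colour algebra is now $\varepsilon\delta$ rather than $\varepsilon$. The key observation is that the even homomorphism $f$ preserves degrees, so the scalar factors $\sigma(x,y)$ on the source side match the scalar factors $\sigma(f(x),f(y))$ on the target side automatically.

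First I would record the data inherited from $f$ being a BiHom-Lie colour algebra homomorphism: $f$ is even, $f\circ\alpha=\alpha\circ f$, $f\circ\beta=\beta\circ f$, and $f([x,y])=[f(x),f(y)]'$ for all homogeneous $x,y\in\mathcal A$. Evenness implies $\overline{f(x)}=\overline{x}$, hence $\sigma(f(x),f(y))=\sigma(x,y)$ and $\delta(f(x),f(y))=\delta(x,y)$ for every pair of homogeneous elements.

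Next I would perform the one-line verification that $f$ intertwines the twisted brackets:
\begin{eqnarray*}
f([x,y]^\sigma) &=& f(\sigma(x,y)[x,y]) \\
 &=& \sigma(x,y)\,f([x,y]) \\
 &=& \sigma(x,y)\,[f(x),f(y)]' \\
 &=& \sigma(f(x),f(y))\,[f(x),f(y)]' \\
 &=& [f(x),f(y)]'^\sigma.
\end{eqnarray*}
Combined with $f\circ\alpha=\alpha\circ f$ and $f\circ\beta=\beta\circ f$, this is exactly what is needed for $f$ to be a morphism of BiHom-Lie colour algebras in the twisted setting. The bicharacters $\varepsilon\delta$ on source and target agree on corresponding homogeneous elements (again by evenness of $f$), so the ambient $\varepsilon\delta$-skewsymmetry and $\varepsilon\delta$-BiHom-Jacobi identity guaranteed by the previous proposition are respected by $f$.

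There is no substantive obstacle here: the content is purely bookkeeping about degrees. The only point that deserves to be spelled out is the evenness of $f$, which forces the multiplier $\sigma$ and its associated bicharacter $\delta$ to behave identically on $\mathcal A$ and on $f(\mathcal A)\subseteq\mathcal A'$, so the twist operation commutes with $f$. Everything else is a direct translation of the corresponding $\varepsilon$-statement.
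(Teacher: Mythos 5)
Your proposal is correct and is essentially the paper's own argument: the paper leaves this corollary without a separate proof precisely because it is the same one-line computation $f([x,y]^\sigma)=\sigma(x,y)[f(x),f(y)]'=[f(x),f(y)]'^\sigma$ already given for the $\varepsilon$-twist corollary, together with $f\circ\alpha=\alpha\circ f$ and $f\circ\beta=\beta\circ f$. Your additional remark that evenness of $f$ makes $\sigma$ and $\delta$ agree on corresponding degrees is exactly the bookkeeping point that justifies replacing $\varepsilon$ by $\varepsilon\delta$, so nothing is missing.
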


\section{BiHom-Lie colour admissible algebras}
In this section, we aim to extend the notions and results about Lie admissible (colour)
algebras \cite{Goze2004admissible,yuan2010hom} to more generalized cases: BiHom-Lie colour admissible algebras and flexible
BiHom-Lie colour admissible algebras. We will also explore some other general classes of such kind
of algebras: $G$-BiHom-associative colour algebras, which we use to classify   BiHom-Lie colour
admissible algebras. In this section, we always assume that the structure maps $\alpha$ and $\beta$ are bijective.

\begin{df} Let $(\mathcal{A},\mu,\varepsilon,\alpha,\beta)$ be a BiHom-Lie colour algebra on the $\Gamma$-graded vector space $\mathcal{A}$ defined
by an even mutiplication $\mu$ and an algebra endomorphism $\alpha$ and $\beta$. Let $\varepsilon$ be a bicharacter on $\Gamma$.
Then $(\mathcal{A},[.,.],\varepsilon,\alpha,\beta)$ is said to be a BiHom-Lie colour admissible algebra if the bracket defined by
$$[x, y] = xy -\varepsilon(x, y)(\alpha^{-1}\beta(y)) (\alpha\beta^{-1}(x))$$
satisfies the BiHom-Jacobi identity for all homogeneous elements $x, y, z \in \mathcal{A}$.
\end{df}

Let $(\mathcal{A},[.,.],\varepsilon,\alpha,\beta)$ be a BiHom-Lie colour algebra. Define a new commutator product $[.,.]^{'}$ by
$$ [x,y]^{'}=[x,y]-\varepsilon(x,y)[\alpha^{-1}\beta(y),\alpha\beta^{-1}(x)],~~\forall~~ x, y \in \mathcal{H}(\mathcal{A}).$$
It is easy to see that $[\beta(x),\alpha(y)]^{'}= -\varepsilon(x,y)[\beta(y),\alpha(x)]^{'}$. Moreover, we have
\begin{eqnarray*}
  && \circlearrowleft_{x,y,z}\varepsilon(z,x)[\beta^2(x),[\beta(y),\alpha(z)]^{'}]^{'} \\
  &=& \circlearrowleft_{x,y,z} \varepsilon(z,x)
  [\beta^2(x),[\beta(y),\alpha(z)]-\varepsilon(y,z)[\alpha^{-1}(\beta^2(z)),\beta^{-1}(\alpha^2(y))]]^{'}\\
  &=& \circlearrowleft_{x,y,z} \varepsilon(z,x)\Big( [\beta^2(x),[\beta(y),\alpha(z)]]^{'}-\varepsilon(y,z)[\beta^2(x),[\alpha^{-1}(\beta^2(z)),\beta^{-1}(\alpha^2(y))]]^{'}\Big)\\
  &=& \circlearrowleft_{x,y,z} \varepsilon(z,x)\Big( [\beta^2(x),[y,z]]-\varepsilon(x,y+z)[\alpha^{-1}\beta[\beta(y),\alpha(z)],\beta^{-1}\alpha(\beta^2(x))]-\varepsilon(y,z)[\alpha(x),[z,y]]\\
  &+& \varepsilon(y,z)\varepsilon(x,z+y)[[z,y],\alpha(x)]\Big)\\
  &=& 4 \circlearrowleft_{x,y,z} \varepsilon(z,x)[\beta^2(x),[\beta(y),\alpha(z)]]=0.
\end{eqnarray*}
Our discussion above now shows:
\begin{prop}Any BiHom-Lie colour algebra is BiHom-Lie colour admissible.
\end{prop}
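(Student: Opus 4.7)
The plan is to observe that the new bracket $[x,y]'$ is essentially just a scalar multiple of the original bracket $[x,y]$, which immediately reduces the $\varepsilon$-BiHom-Jacobi identity for $[\cdot,\cdot]'$ to that of $[\cdot,\cdot]$. So rather than expanding the quadruply-nested expression $\circlearrowleft \varepsilon(z,x)[\beta^2(x),[\beta(y),\alpha(z)]']'$ term by term, I would first simplify the definition of $[\cdot,\cdot]'$ itself.

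The key step is to rewrite $[\alpha^{-1}\beta(y),\alpha\beta^{-1}(x)]$ using the $\varepsilon$-skewsymmetry identity $[\beta(a),\alpha(b)] = -\varepsilon(a,b)[\beta(b),\alpha(a)]$ of the original BiHom-Lie colour bracket. Setting $a = \alpha^{-1}(y)$ and $b = \beta^{-1}(x)$ and using the commutativity $\alpha\circ\beta = \beta\circ\alpha$ (so that $\beta(a) = \alpha^{-1}\beta(y)$ and $\alpha(b) = \alpha\beta^{-1}(x)$), skewsymmetry gives
$$[\alpha^{-1}\beta(y),\alpha\beta^{-1}(x)] = -\varepsilon(y,x)[x,y].$$
Multiplying by $\varepsilon(x,y)$ and using $\varepsilon(x,y)\varepsilon(y,x) = 1$ yields $\varepsilon(x,y)[\alpha^{-1}\beta(y),\alpha\beta^{-1}(x)] = -[x,y]$, and therefore $[x,y]' = 2[x,y]$.

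Once this identification is in hand, the rest of the proof is essentially free. The $\varepsilon$-skewsymmetry $[\beta(x),\alpha(y)]' = -\varepsilon(x,y)[\beta(y),\alpha(x)]'$ is inherited directly from that of $[\cdot,\cdot]$, and the $\varepsilon$-BiHom-Jacobi identity picks up a factor of $2\cdot 2 = 4$ from the two layers of bracketing, giving
$$\circlearrowleft_{x,y,z}\varepsilon(z,x)[\beta^2(x),[\beta(y),\alpha(z)]']' = 4\circlearrowleft_{x,y,z}\varepsilon(z,x)[\beta^2(x),[\beta(y),\alpha(z)]] = 0,$$
where the last equality is the original $\varepsilon$-BiHom-Jacobi identity for $(\mathcal{A},[\cdot,\cdot],\varepsilon,\alpha,\beta)$. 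This is consistent with the factor of $4$ that appears in a direct term-by-term expansion.

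The only genuine obstacle is bookkeeping: one must verify that $\alpha^{-1}\beta = \beta\alpha^{-1}$ and $\alpha\beta^{-1} = \beta^{-1}\alpha$ in order to legitimately put the expression $[\alpha^{-1}\beta(y),\alpha\beta^{-1}(x)]$ into the form $[\beta(\,\cdot\,),\alpha(\,\cdot\,)]$ that skewsymmetry recognizes. Both facts follow from the bijectivity of $\alpha,\beta$ (assumed throughout this section) together with the commutation relation $\alpha\circ\beta = \beta\circ\alpha$. Once this is confirmed, the proof collapses to the observation $[x,y]' = 2[x,y]$ and there is nothing further to check.
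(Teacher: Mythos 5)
Your proof is correct. The key identity is exactly right: with $\alpha,\beta$ bijective (as the paper assumes throughout that section) and $\alpha\beta=\beta\alpha$, setting $a=\alpha^{-1}(y)$, $b=\beta^{-1}(x)$ in the BiHom-skewsymmetry $[\beta(a),\alpha(b)]=-\varepsilon(a,b)[\beta(b),\alpha(a)]$ gives $[\alpha^{-1}\beta(y),\alpha\beta^{-1}(x)]=-\varepsilon(y,x)[x,y]$ (the bicharacter factor is $\varepsilon(y,x)$ because $\alpha^{-1},\beta^{-1}$ are even), hence $[x,y]'=2[x,y]$, and both the skewsymmetry of $[\cdot,\cdot]'$ and its $\varepsilon$-BiHom-Jacobi identity follow immediately with the factor $4=2\cdot 2$. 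The paper reaches the same conclusion by a different organization of the computation: it never states $[x,y]'=2[x,y]$, but instead unfolds the prime bracket twice inside $\circlearrowleft_{x,y,z}\varepsilon(z,x)[\beta^2(x),[\beta(y),\alpha(z)]']'$ and identifies each of the four resulting terms, via the same skewsymmetry manipulation, with $\varepsilon(z,x)[\beta^2(x),[\beta(y),\alpha(z)]]$, which is where its final factor $4$ comes from. So the underlying mechanism is identical, but your version isolates the scalar identity as a preliminary lemma, which collapses the proof to one line and avoids the four-term bookkeeping (and the attendant slips) in the paper's expansion; the paper's term-by-term template is the one that would still be needed in situations where the modified bracket is not simply a scalar multiple of the original, e.g. when the commutator is built from a product that is not already BiHom-skewsymmetric.
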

Let $(\mathcal{A},\mu,\varepsilon,\beta,\alpha)$ be a colour BiHom-algebra, that is respectively a vector space $\mathcal{A}$, a multiplication $\mu$,    a bicharacter $\varepsilon$ on the abelian group $\Gamma$ and two linear maps $\alpha$ and $\beta$. Notice  that there is no conditions  required on the given data. Let
$$[x,y]=xy-\varepsilon(x,y)(\alpha^{-1}\beta(y)) (\beta^{-1}\alpha(x))$$
be the associated colour commutative. A BiHom associator $as_{\alpha,\beta}$ of $\mu$ is defined by
\begin{equation}\label{BiHomAssociator}
 as_{\alpha,\beta}(x,y,z)=\alpha(x)(yz)-(xy)\beta(z),~~\forall~~x,y,z\in \mathcal{H}(\mathcal{A}).
 \end{equation}
A colour BiHom-algebra is said to be flexible if $as_{\alpha,\beta}(x,y,x)=0,$ for all $x,y\in \mathcal{H}(\mathcal{A}).$\\
Now let us introduce the notation:
\begin{eqnarray*}
S(x,y,z)&=& \circlearrowleft_{x,y,z} \varepsilon(z,x)as_{\alpha,\beta}(\alpha^{-1}\beta^2(x),\beta(y),\alpha(z)).
\end{eqnarray*}
Then we have the following properties:

\begin{lem}\label{admiss}
\begin{eqnarray*}
S(x,y,z)&=&\circlearrowleft_{x,y,z} \varepsilon(z,x)[\beta^2(x),\beta(y)\alpha(z)].
\end{eqnarray*}
\end{lem}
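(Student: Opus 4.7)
The plan is to expand both sides in terms of the underlying multiplication and then exploit the cyclic symmetry to match them termwise.

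First I would substitute the definition \eqref{BiHomAssociator} of the BiHom-associator into the expression for $S(x,y,z)$. With $u=\alpha^{-1}\beta^2(x)$, $v=\beta(y)$, $w=\alpha(z)$ this gives
\begin{equation*}
as_{\alpha,\beta}(\alpha^{-1}\beta^2(x),\beta(y),\alpha(z))
=\beta^2(x)\bigl(\beta(y)\alpha(z)\bigr)-\bigl(\alpha^{-1}\beta^2(x)\,\beta(y)\bigr)\alpha\beta(z),
\end{equation*}
using $\alpha\circ\alpha^{-1}\beta^2=\beta^2$ and $\beta\circ\alpha=\alpha\circ\beta$. Next I would expand the target bracket using the defining commutator $[u,v]=uv-\varepsilon(u,v)(\alpha^{-1}\beta(v))(\alpha\beta^{-1}(u))$. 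Since $\alpha^{-1}\beta$ is an algebra morphism (as $\alpha,\beta$ are bijective morphisms), $\alpha^{-1}\beta(\beta(y)\alpha(z))=\alpha^{-1}\beta^2(y)\,\beta(z)$, and $\alpha\beta^{-1}(\beta^2(x))=\alpha\beta(x)$, so
\begin{equation*}
[\beta^2(x),\beta(y)\alpha(z)]=\beta^2(x)\bigl(\beta(y)\alpha(z)\bigr)-\varepsilon(x,y)\varepsilon(x,z)\bigl(\alpha^{-1}\beta^2(y)\,\beta(z)\bigr)\alpha\beta(x).
\end{equation*}

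The leading term $\beta^2(x)(\beta(y)\alpha(z))$ is common to both expressions, so after multiplying by $\varepsilon(z,x)$ and taking the cyclic sum, their difference reduces to
\begin{equation*}
\circlearrowleft_{x,y,z}\Bigl\{-\varepsilon(z,x)\bigl(\alpha^{-1}\beta^2(x)\beta(y)\bigr)\alpha\beta(z)+\varepsilon(x,y)\bigl(\alpha^{-1}\beta^2(y)\beta(z)\bigr)\alpha\beta(x)\Bigr\},
\end{equation*}
where the simplification $\varepsilon(z,x)\varepsilon(x,y)\varepsilon(x,z)=\varepsilon(x,y)$ comes from $\varepsilon(z,x)\varepsilon(x,z)=1$. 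Writing $A(x,y,z):=(\alpha^{-1}\beta^2(x)\beta(y))\alpha\beta(z)$, the two terms in the sum become $-\varepsilon(z,x)A(x,y,z)$ and $\varepsilon(x,y)A(y,z,x)$, which are precisely cyclic shifts of one another; the cyclic sum therefore vanishes.

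The computation is routine and the only subtlety is bookkeeping: keeping track of the correct composition of $\alpha^{\pm 1}$ and $\beta^{\pm 1}$ inside each factor (using $\alpha\beta=\beta\alpha$ and the fact that both maps are algebra morphisms), and collapsing the bicharacter factors via $\varepsilon(z,x)\varepsilon(x,z)=1$ so that the cyclic relabeling $(x,y,z)\mapsto(y,z,x)$ actually carries one sum onto the other. No deeper structural fact about BiHom-Lie colour algebras beyond the definitions of $as_{\alpha,\beta}$, the commutator, and basic properties of $\varepsilon$ is needed.
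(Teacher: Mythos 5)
Your proposal is correct and follows essentially the same route as the paper: both expand each commutator $[\beta^2(x),\beta(y)\alpha(z)]$ via the defining bracket $[u,v]=uv-\varepsilon(u,v)(\alpha^{-1}\beta(v))(\alpha\beta^{-1}(u))$, collapse the bicharacter factors using $\varepsilon(z,x)\varepsilon(x,z)=1$, and identify the resulting six terms with the cyclic sum of $\varepsilon(z,x)\,as_{\alpha,\beta}(\alpha^{-1}\beta^2(x),\beta(y),\alpha(z))$. Your reformulation of the final step as the vanishing of a cyclically relabeled difference is just a repackaging of the paper's term-by-term matching, so no further comment is needed.
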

\begin{proof}The assertion follows by  expanding the commutators on the right hand side:
\begin{eqnarray*}
   && \varepsilon(z,x)[\beta^2(x),\beta(y)\alpha(z)]+\varepsilon(x,y)[\beta^2(y),\beta(z)\alpha(x)]+\varepsilon(y,z)[\beta^2(z),\beta(x)\alpha(y)]\\
   && = \varepsilon(z,x)\beta^2(x)(\beta(y)\alpha(z))-\varepsilon(x,y)(\alpha^{-1}\beta^2(y)\beta(z))\alpha(\beta(x))
   +\varepsilon(x,y)\beta^2(y)(\beta(z)\alpha(x))\\
   && -\varepsilon(y,z)(\alpha^{-1}\beta^2(z)\beta(x))\alpha(\beta(y))
   +\varepsilon(y,z)\beta^2(z)(\beta(x)\alpha(y))-\varepsilon(z,x)(\alpha^{-1}\beta^2(x)\beta(y))\alpha(\beta(z))\\
   && = \circlearrowleft_{x,y,z}\varepsilon(z,x)as_{\alpha,\beta}(\alpha^{-1}\beta^2(x),\beta(y),\alpha(z))\\
   && = S(x,y,z).
\end{eqnarray*}
\end{proof}
\begin{prop}A colour BiHom-algebra $(\mathcal{A},\mu,\varepsilon,\beta,\alpha)$ is BiHom-Lie colour admissible if and only if
it satisfies
\begin{eqnarray*}
S(x, y, z) &=& \varepsilon(x, y)\varepsilon(y, z)\varepsilon(z, x)S(x, z, y),~~\forall x, y, z \in \mathcal{H}(\mathcal{A}).
\end{eqnarray*}
\end{prop}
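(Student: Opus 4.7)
The plan is to unfold the $\varepsilon$-BiHom-Jacobi identity for the commutator $[x,y]=xy-\varepsilon(x,y)(\alpha^{-1}\beta(y))(\alpha\beta^{-1}(x))$ and compare it term by term with the expressions $S(x,y,z)$ and $S(x,z,y)$ produced by Lemma \ref{admiss}. By definition, admissibility amounts to the vanishing of
$$J(x,y,z):=\circlearrowleft_{x,y,z}\varepsilon(z,x)[\beta^2(x),[\beta(y),\alpha(z)]],$$
so the core of the argument will be to establish the identity
$$J(x,y,z)=S(x,y,z)-\varepsilon(x,y)\varepsilon(y,z)\varepsilon(z,x)\,S(x,z,y),$$
after which the claimed equivalence is immediate.

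First I would compute the inner bracket directly from the definition: since $\alpha\circ\beta=\beta\circ\alpha$, the twists $\alpha^{-1}\beta\alpha$ and $\alpha\beta^{-1}\beta$ collapse to $\beta$ and $\alpha$, giving $[\beta(y),\alpha(z)]=\beta(y)\alpha(z)-\varepsilon(y,z)\beta(z)\alpha(y)$. Linearity of $[\beta^2(x),\,\cdot\,]$ in the second slot then splits $J(x,y,z)$ into two cyclic sums; the first is exactly $\circlearrowleft_{x,y,z}\varepsilon(z,x)[\beta^2(x),\beta(y)\alpha(z)]$, which by Lemma \ref{admiss} equals $S(x,y,z)$.

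The main obstacle is to identify the second cyclic sum $-\circlearrowleft_{x,y,z}\varepsilon(z,x)\varepsilon(y,z)[\beta^2(x),\beta(z)\alpha(y)]$ with $-\varepsilon(x,y)\varepsilon(y,z)\varepsilon(z,x)\,S(x,z,y)$. I would expand $S(x,z,y)=\circlearrowleft_{x,z,y}\varepsilon(y,x)[\beta^2(x),\beta(z)\alpha(y)]$ as three explicit summands, multiply through by $-\varepsilon(x,y)\varepsilon(y,z)\varepsilon(z,x)$, and collect the bicharacter factors using the relations $\varepsilon(a,b)\varepsilon(b,a)=1$ and $\varepsilon(a,b+c)=\varepsilon(a,b)\varepsilon(a,c)$. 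For example, the coefficient in front of $[\beta^2(x),\beta(z)\alpha(y)]$ becomes $-\varepsilon(x,y)\varepsilon(y,z)\varepsilon(z,x)\varepsilon(y,x)=-\varepsilon(y,z)\varepsilon(z,x)$, which is exactly the coefficient appearing in the second cyclic sum; the remaining two summands, obtained from the cyclic shifts $(x,y,z)\mapsto(y,z,x)$ and $(z,x,y)$, match in the same way. This is essentially a reindexing exercise.

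Combining these two identifications yields the announced formula for $J(x,y,z)$, and hence $J(x,y,z)=0$ for all homogeneous $x,y,z$ if and only if $S(x,y,z)=\varepsilon(x,y)\varepsilon(y,z)\varepsilon(z,x)\,S(x,z,y)$. Note that the $\varepsilon$-skew-symmetry $[\beta(x),\alpha(y)]=-\varepsilon(x,y)[\beta(y),\alpha(x)]$ of the commutator follows automatically from its definition together with $\alpha\beta=\beta\alpha$ and $\varepsilon(x,y)\varepsilon(y,x)=1$, so no separate verification is needed and the equivalence reduces entirely to the Jacobi computation above.
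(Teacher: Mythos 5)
Your proposal is correct and follows essentially the same route as the paper: both use Lemma \ref{admiss} to rewrite $S(x,y,z)$ as $\circlearrowleft_{x,y,z}\varepsilon(z,x)[\beta^2(x),\beta(y)\alpha(z)]$, expand the inner commutator $[\beta(y),\alpha(z)]=\beta(y)\alpha(z)-\varepsilon(y,z)\beta(z)\alpha(y)$, and identify $S(x,y,z)-\varepsilon(x,y)\varepsilon(y,z)\varepsilon(z,x)S(x,z,y)$ with the cyclic $\varepsilon$-BiHom-Jacobi expression (you merely read the computation in the opposite direction, starting from $J$). The bicharacter bookkeeping you indicate matches the paper's, so the equivalence follows as claimed.
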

\begin{proof}From Lemma \ref{admiss}, for all $x,y,z \in \mathcal{H}(\mathcal{A})$ we have
\begin{eqnarray*}
   && S(x,y,z)-\varepsilon(x,y) \varepsilon(y,z) \varepsilon(z,x)S(x,z,y)\\
   &&= \circlearrowleft_{x,y,z}\varepsilon(z,x)[\beta^2(x),\beta(y)\alpha(z)]-\varepsilon(x,y) \varepsilon(y,z) \varepsilon(y,x)\circlearrowleft_{x,y,z}\varepsilon(y,x)[\beta^2(x),\beta(z)\alpha(y)]\\
   && = \circlearrowleft_{x,y,z}\varepsilon(z,x)[\beta^2(x),\beta(y)\alpha(z)-\varepsilon(y,z)\beta(z)\alpha(y)]\\
   && =\circlearrowleft_{x,y,z}\varepsilon(z,x)[\beta^2(x),[\beta(y),\alpha(z)]]=0,
\end{eqnarray*}
which proves the result.
\end{proof}
In the following, we explore some other general classes of BiHom-Lie colour admissible algebras, $G$-Hom-associative colour algebras, extending the class of Hom-associative algebras
and Hom-associative superalgebras. We will provide a classification of BiHom-Lie colour admissible algebras using the symmetric group $S_3$, whereas it was classified in \cite{makhlouf2008silvi} and \cite{ammar2010hom} for the
Hom-Lie and Hom-Lie super cases, respectively.
Let $S_3$ be the symmetric group generated by $\sigma_1 = (1~~2), \sigma_2 = (2~~3)$. Let $(\mathcal{A},\mu,\beta,\alpha)$
be a colour BiHom-algebra with respect to bicharcater $\varepsilon$. Suppose that $S_3$ acts on $\mathcal{A}^3$ in the usual way, i.e. $\sigma(x_1, x_2, x_3) =
(x_{\sigma(1)}, x_{\sigma(2)}, x_{\sigma(3)})$, for any $\sigma\in S_3$ and $x_1, x_2, x_3 \in \mathcal{H}(\mathcal{A})$. For convenience, we introduce a
notion of  degree of the transposition $\sigma_i$ with $i \in \{1, 2\}$ by setting
\begin{eqnarray*}
\varepsilon (\sigma; (x_1, x_2, x_3))=|\sigma_i(x_1, x_2, x_3)| &=& \varepsilon(x_i, x_{i+1}),~~\forall x_1, x_2, x_3 \in \mathcal{H}(\mathcal{A}).
\end{eqnarray*}
It is natural to assume that the degree of the identity is $1$ and for the composition $\sigma_i\circ \sigma_j$ , it
is defined by:
\begin{eqnarray*}
|\sigma_i\circ \sigma_j(x_1, x_2, x_3)| &=& |\sigma_j(x_1, x_2, x_3)||\sigma_i(\sigma_j(x_1, x_2, x_3))|\\
&=& |\sigma_j(x_1, x_2, x_3)||\sigma_i(x_{\sigma_j(1)}, x_{\sigma_j(2)}, x_{\sigma_j(3)})|.
\end{eqnarray*}
One can define by induction the degree for any composition. Hence we have
\begin{eqnarray*}
|Id(x_1, x_2, x_3)| &=& 1,\\
|\sigma_1(x_1, x_2, x_3)|&=&\varepsilon(x_1, x_2),\\
|\sigma_2(x_1, x_2, x_3)|&=&\varepsilon(x_2, x_3),\\
|\sigma_1\circ \sigma_2(x_1, x_2, x_3)|&=&\varepsilon(x_2, x_3)\varepsilon(x_1, x_3),\\
|\sigma_2\circ \sigma_1(x_1, x_2, x_3)|&=&\varepsilon(x_1, x_2)\varepsilon(x_1, x_3),\\
|\sigma_2\circ \sigma_1\circ \sigma_2(x_1, x_2, x_3)|&=&\varepsilon(x_2, x_3)\varepsilon(x_1, x_3)\varepsilon(x_1, x_2),\\
\end{eqnarray*}
for any homogeneous elements $x_1, x_2, x_3$ in $\mathcal{A}$. Let $sgn(\sigma)$ denote the signature of $\sigma\in S_3$.
We have the following useful lemma:
\begin{lem}A colour BiHom-algebra $(\mathcal{A},\mu,\varepsilon,\beta,\alpha)$ is a BiHom-Lie admissible colour
algebra if and only if the following condition holds:
\begin{eqnarray*}
\sum_{\sigma \in S_3}(-1)^{sgn(\sigma)}(-1)^{|\sigma(x_1, x_2, x_3)|} as_{\alpha,\beta}\circ \sigma(\alpha^{-1}\beta^2(x_1),\beta(x_2),\alpha(x_3))&=& 0.
\end{eqnarray*}
\end{lem}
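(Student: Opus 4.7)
The plan is to deduce this lemma directly from the proposition immediately preceding it, which already characterizes BiHom-Lie colour admissibility by the single equation
\[
S(x,y,z) \;=\; \varepsilon(x,y)\varepsilon(y,z)\varepsilon(z,x)\,S(x,z,y).
\]
So I only need to show that this symmetry condition on $S$ can be rewritten as the stated alternating sum over $S_3$ of $as_{\alpha,\beta}$. The strategy is simply to expand both sides of the equation above using Lemma \ref{admiss} and collect terms.

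First I would expand $S(x,y,z)$. By Lemma \ref{admiss} it is the sum of three associators, corresponding to the identity and the two $3$-cycles $\sigma_1\sigma_2$ and $\sigma_2\sigma_1$ applied to $(\alpha^{-1}\beta^2(x_1),\beta(x_2),\alpha(x_3))$, each multiplied by a cyclic $\varepsilon$-factor. These three $\sigma$'s are exactly the even elements of $S_3$, i.e.\ those with $(-1)^{\mathrm{sgn}(\sigma)}=+1$. Second, I would apply the same expansion to $S(x,z,y)$; since swapping the last two arguments amounts to acting by $\sigma_2$, the three associators now appearing are indexed by the three odd permutations $\sigma_2$, $\sigma_1$, $\sigma_2\sigma_1\sigma_2$. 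Multiplying by the global prefactor $\varepsilon(x,y)\varepsilon(y,z)\varepsilon(z,x)$ and then moving $\varepsilon(x,y)\varepsilon(y,z)\varepsilon(z,x)S(x,z,y)$ to the left-hand side produces a sum of six associators, one for each element of $S_3$.

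To finish, I would match the six resulting coefficients with the formula claimed in the lemma. The three even terms carry sign $+1$ and bicharacter weight $|\sigma(x_1,x_2,x_3)|$ as defined in the paper (one checks, e.g., that $\varepsilon(z,x)$ attached to the identity coincides with the weight assigned to $\mathrm{Id}$ up to symmetry, and similarly for $\sigma_1\sigma_2$, $\sigma_2\sigma_1$ using bimultiplicativity and $\varepsilon(a,b)\varepsilon(b,a)=1$). The three odd terms pick up an extra minus sign (becoming $(-1)^{\mathrm{sgn}(\sigma)}=-1$) from having been transported across the equality, and their combined $\varepsilon$-factors collapse, again by bimultiplicativity and the skew-symmetry of $\varepsilon$, to the weights $|\sigma(x_1,x_2,x_3)|$ for the odd $\sigma$'s listed in the paper. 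Since every step is an equivalence, this simultaneously proves both directions.

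The main obstacle will be the combinatorial bookkeeping of the $\varepsilon$-weights: one must verify, one permutation at a time, that the product of the cyclic factor produced by Lemma \ref{admiss} with the prefactor $\varepsilon(x,y)\varepsilon(y,z)\varepsilon(z,x)$ really equals the degree $|\sigma(x_1,x_2,x_3)|$ prescribed in the definition. Once this is checked for, say, $\sigma=\sigma_2$, the remaining two odd permutations follow by cyclically relabelling $(x_1,x_2,x_3)$ and invoking the bicharacter identities $\varepsilon(a,b+c)=\varepsilon(a,b)\varepsilon(a,c)$ and $\varepsilon(a,b)\varepsilon(b,a)=1$. No further algebraic input is needed beyond the previous proposition and Lemma \ref{admiss}.
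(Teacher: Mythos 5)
Your route is sound and genuinely different in organization from the paper's. The paper proves the lemma by a single direct computation: it expands the cyclic sum $\circlearrowleft_{x_1,x_2,x_3}\varepsilon(x_3,x_1)[\beta^2(x_1),[\beta(x_2),\alpha(x_3)]]$ of the associated commutator into products and regroups them as the alternating $S_3$-sum of BiHom-associators, so admissibility (the $\varepsilon$-BiHom-Jacobi condition) is immediately the vanishing of that sum. You instead chain the preceding proposition (admissibility $\Leftrightarrow S(x,y,z)=\varepsilon(x,y)\varepsilon(y,z)\varepsilon(z,x)S(x,z,y)$) with the observation that $S$ is, by its very definition (not by Lemma \ref{admiss}, which goes the opposite way, from associators to brackets), already a weighted sum of the three associators indexed by the cyclic permutations, while $\varepsilon(x,y)\varepsilon(y,z)\varepsilon(z,x)S(x,z,y)$ supplies the three transposition terms; only $\varepsilon$-bookkeeping then remains. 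This buys a shorter argument that reuses the bracket-to-product expansion already hidden in the proposition's proof, at the price that the entire burden falls on the coefficient matching.

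On that matching there is one point you must state correctly: the coefficients do \emph{not} coincide term by term with the weights $(-1)^{sgn(\sigma)}|\sigma(x_1,x_2,x_3)|$ — for instance the identity permutation carries the factor $\varepsilon(x_3,x_1)$ inside $S(x,y,z)$ but weight $1$ in the lemma, and no bicharacter identity turns $\varepsilon(x_3,x_1)$ into $1$, so ``up to symmetry'' is not a justification. What is true (and what you need) is that all six coefficients of $S(x_1,x_2,x_3)-\varepsilon(x_1,x_2)\varepsilon(x_2,x_3)\varepsilon(x_3,x_1)S(x_1,x_3,x_2)$ equal the single common factor $\varepsilon(x_3,x_1)$ times the corresponding weights of the lemma, after using $\varepsilon(a,b)\varepsilon(b,a)=1$, bimultiplicativity, and some care about whether $\sigma$ or $\sigma^{-1}$ labels each associator. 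Since $\varepsilon(x_3,x_1)\in\mathbb{K}^{\ast}$ is invertible, the two vanishing conditions remain equivalent and your ``iff'' survives; but the common overall factor must be made explicit, otherwise the key step of your proof is asserted rather than proved. (The paper's own displayed identity is loose on exactly this normalisation, so this is a caveat to repair, not a refutation of your approach.)
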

\begin{proof}One only needs to verify the BiHom-$\varepsilon$-Jacobi identity. By straightforward calculation,
the associated color commutator satisfies
\begin{eqnarray*}
  && \circlearrowleft_{x_1,x_2,x_3}\varepsilon(x_3,x_1)[\beta^2(x_1),[\beta(x_2),\alpha(x_3)]]\\
  &&=\sum_{\sigma \in S_3}(-1)^{sgn(\sigma)}(-1)^{|\sigma(x_1, x_2, x_3)|}as_{\alpha,\beta}\circ \sigma(\alpha^{-1}\beta^2(x_1),\beta(x_2),\alpha(x_3)).\\
\end{eqnarray*}
\end{proof}
Let $G$ be a subgroup of $S_3$, any  colour BiHom-algebra $(\mathcal{A},\mu,\varepsilon,\beta,\alpha)$ is said to be $G$-BiHom-associative if the following equation holds:
\begin{eqnarray*}
\sum_{\sigma \in G}(-1)^{sgn(\sigma)}(-1)^{|\sigma(x_1, x_2, x_3)|} as_{\alpha,\beta}\circ \sigma(\alpha^{-1}\beta^2(x_1),\beta(x_2),\alpha(x_3))&=& 0,~~\forall~~x_1,x_2,x_3 \in \mathcal{H}(\mathcal{A}).
\end{eqnarray*}
\begin{prop}Let $G$ be a subgroup of the symmetric group $S_3$. Then any $G$-BiHom-associative colour algebra $(\mathcal{A},\mu,\varepsilon,\beta,\alpha)$ is BiHom-Lie admissible.
\end{prop}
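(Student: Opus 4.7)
The plan is to apply the preceding lemma, which characterises BiHom-Lie admissibility as the vanishing, for every homogeneous triple $(x_1,x_2,x_3)$, of the alternating sum
\[
\Sigma(x_1,x_2,x_3):=\sum_{\sigma\in S_3}(-1)^{\mathrm{sgn}(\sigma)}(-1)^{|\sigma(x)|}\,as_{\alpha,\beta}\circ\sigma(u_1,u_2,u_3),
\]
where $x=(x_1,x_2,x_3)$ and $(u_1,u_2,u_3)=(\alpha^{-1}\beta^2(x_1),\beta(x_2),\alpha(x_3))$. Since $G$ is a subgroup of $S_3$, I would fix a transversal $T$ for the left cosets, write $S_3=\bigsqcup_{\tau\in T}\tau G$, and correspondingly decompose $\Sigma=\sum_{\tau\in T}\Sigma_\tau$, where $\Sigma_\tau$ aggregates the terms with $\sigma=\tau\rho$, $\rho\in G$. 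It then suffices to show $\Sigma_\tau=0$ for each $\tau\in T$.

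Fix $\tau\in T$ and introduce the auxiliary triple $u'_i:=u_{\tau(i)}$. Since the action of $S_3$ on triples is an anti-action, $(\tau\rho)(u)=\rho(\tau(u))=\rho(u')$, and therefore $as_{\alpha,\beta}\circ(\tau\rho)(u)=as_{\alpha,\beta}\circ\rho(u')$. Bijectivity of $\alpha$ and $\beta$ makes the even maps $\psi_1=\alpha^{-1}\beta^2$, $\psi_2=\beta$, $\psi_3=\alpha$ bijective, so I would set $y_i:=\psi_i^{-1}(u'_i)=\psi_i^{-1}\psi_{\tau(i)}(x_{\tau(i)})$, which satisfies $\psi_i(y_i)=u'_i$ and, by evenness, $|y_i|=|x_{\tau(i)}|$. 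The composition rule for the colour-sign then gives $|\tau\rho(x)|=|\tau(x)|\,|\rho(\tau(x))|$, and because the Koszul-type sign $|\rho(\cdot)|$ depends only on the degrees of its argument, which coincide for $\tau(x)$ and $y$, this simplifies to $|\tau\rho(x)|=|\tau(x)|\,|\rho(y)|$.

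Combining this with $\mathrm{sgn}(\tau\rho)=\mathrm{sgn}(\tau)\mathrm{sgn}(\rho)$, the scalar $(-1)^{\mathrm{sgn}(\tau)}(-1)^{|\tau(x)|}$ factors out of $\Sigma_\tau$, leaving
\[
\Sigma_\tau=(-1)^{\mathrm{sgn}(\tau)}(-1)^{|\tau(x)|}\sum_{\rho\in G}(-1)^{\mathrm{sgn}(\rho)}(-1)^{|\rho(y)|}\,as_{\alpha,\beta}\circ\rho(\psi_1(y_1),\psi_2(y_2),\psi_3(y_3)).
\]
The inner sum is exactly the left-hand side of the $G$-BiHom-associativity identity at $(y_1,y_2,y_3)$, hence vanishes by hypothesis. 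Summing over $\tau\in T$ yields $\Sigma=0$, establishing BiHom-Lie admissibility.

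The main technical obstacle is the colour-degree bookkeeping that produces the factorisation $|\tau\rho(x)|=|\tau(x)|\,|\rho(y)|$: one has to keep the anti-action convention for $S_3$ straight and exploit that $\alpha,\beta$ are even and bijective, so that the substitution $x_{\tau(i)}\leftrightarrow y_i$ preserves colour degrees exactly and the Koszul rearrangement is faithfully tracked across the coset decomposition. Once this is in place, the remainder of the argument is a formal extraction of the $\rho$-independent scalar from $\Sigma_\tau$.
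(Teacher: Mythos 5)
Your proposal is correct and takes essentially the same route as the paper: decompose the full $S_3$-sum from the preceding lemma into left cosets $\tau G$ and show each coset subsum vanishes by the $G$-BiHom-associativity hypothesis. You in fact go further than the paper's proof, which only asserts that each coset sum is zero: your substitution $y_i=\psi_i^{-1}(u_{\tau(i)})$ (legitimate because $\alpha,\beta$ are assumed bijective in this section, so the even maps $\psi_1=\alpha^{-1}\beta^2$, $\psi_2=\beta$, $\psi_3=\alpha$ are invertible and degree-preserving) together with the factorisation of the $\rho$-independent sign is exactly the missing justification.
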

\begin{proof}The $\varepsilon$-skew symmetry follows straightaway from the definition. Assume that $G$ is a
subgroup of $S_3$. Then $S_3$ can be written as the disjoint union of the left cosets of $G$. Say
$S_3 = \bigcup\limits_{\sigma \in I} \sigma G$, with $I \subseteq S_3$ and for any $\sigma, \sigma^{'} \in I,$
\begin{eqnarray*}
&& \sigma \neq \sigma^{'}\Longrightarrow \sigma G\bigcap \sigma^{'} G=\varnothing.
\end{eqnarray*}
Then one has
\begin{eqnarray*}
  && \sum_{\sigma \in S_3}(-1)^{sgn(\sigma)}(-1)^{|\sigma(x_1, x_2, x_3)|}as_{\alpha,\beta}\circ \sigma(\alpha^{-1}\beta^2(x_1),\beta(x_2),\alpha(x_3))\\
  && =\sum_{\tau \in I}\sum_{\sigma \in \tau G}(-1)^{sgn(\sigma)}(-1)^{|\sigma(x_1, x_2, x_3)|}as_{\alpha,\beta}\circ \sigma(\alpha^{-1}\beta^2(x_1),\beta(x_2),\alpha(x_3))=0.
\end{eqnarray*}
\end{proof}

Now we provide a classification of  BiHom-Lie colour admissible algebras via $G$-BiHom-associative colour algebras. The subgroups of $S_3$ are:
\begin{eqnarray*}
&& G_1=\{Id\},~~G_2=\{Id,\sigma_1\},~~G_3=\{Id,\sigma_2\},~~G_4=\{Id,\sigma_2\sigma_1\sigma_2=(1~~3)\},~~G_5=A_3,~~G_6=S_3,
\end{eqnarray*}
where $A_3$ is the alternating subgroup of $S_3$.\\
We obtain the following types of BiHom-Lie admissible colour algebras.
\begin{enumerate}
\item[]$\bullet$ The $G_1$-BiHom-associative colour algebras are the colour BiHom-algebras defined in Definition (\ref{assoc}).\\
\item[]$\bullet$ The $G_2$-BiHom-associative colour algebras satisfy the condition:
\begin{eqnarray*}
&&\beta^2(x)(\beta(y)\alpha(z))-(\alpha^{-1}\beta^2(x)\beta(y))\alpha(\beta(z))\\
&&=\varepsilon(x,y)\Big(\alpha(\beta(y))(\alpha^{-1}\beta^2(x)\alpha(z))
-(\beta(y)\alpha^{-1}\beta^2(x))\alpha(\beta(z)) \Big).
\end{eqnarray*}
\item[]$\bullet$ The $G_3$-BiHom-associative colour algebras satisfy the condition:
\begin{eqnarray*}
 \beta^2(x)(\beta(y)\alpha(z))-(\alpha^{-1}\beta^2(x)\beta(y))\alpha\beta(z) =\varepsilon(y,z)\Big(\beta^2(x)(\beta(z)\alpha(y))-(\alpha^{-1}\beta^2(x)\alpha(z))\beta^2(y) \Big).
\end{eqnarray*}
\item[]$\bullet$ The $G_4$-BiHom-associative colour algebras satisfy the condition:
\begin{eqnarray*}
&&\beta^2(x)(\beta(y)\alpha(z))-(\alpha^{-1}\beta^2(x)\beta(y))\alpha\beta(z) \\ &=&\varepsilon(x,y)\varepsilon(y,z)\varepsilon(x,z)\Big(\alpha^2(z)(\beta(y)\alpha^{-1}\beta^2(x))-(\alpha(z)\beta(y))\alpha^{-1}\beta^3(x) \Big).
\end{eqnarray*}
\item[]$\bullet$ The $G_5$-BiHom-associative colour algebras satisfy the condition:
\begin{eqnarray*}
 && \beta^2(x)(\beta(y)\alpha(z))-\varepsilon(x,y+z)\alpha\beta(y)(\alpha(z)\alpha^{-1}\beta^2(x))-\varepsilon(x+y,z)\alpha^2(z)(\alpha^{-1}\beta^2(x)\beta(y))\\ &&=(\alpha^{-1}\beta^2(x)\beta(y))\alpha\beta(z)-\varepsilon(x,y+z)(\beta(y)\alpha(z))\alpha^{-1}\beta^3(x)-\varepsilon(x+y,z)(\alpha(z)\alpha^{-1}\beta^2(x))\beta^2(y).
\end{eqnarray*}
\item[]$\bullet$ The $G_6$-BiHom-associative colour algebras are the BiHom-Lie colour admissible algebras.
\end{enumerate}
\begin{rem}
Moreover, if in the previous identities we consider $\beta= \alpha$, then we obtain a classification
of Hom-Lie admissible colour algebras \cite{yuan2010hom}.
\end{rem}

\section{Cohomology and Representations of BiHom-Lie colour algebras}

\subsection{Cohomology of BiHom-Lie colour algebras}
%
%
%

We extend first to  BiHom-Lie colour algebras, the concept of $\A$-module  introduced in \cite{sadaoui2011cohomology,benayadi2010hom,sheng2010representations}, and then define a family of cohomology complexes for BiHom-Lie colour  algebras.\\

\begin{df}Let $(\mathcal{A},[\cdot,\cdot],\varepsilon,\alpha,\beta)$ be a regular BiHom-Lie colour algebra. A representation of $\mathcal{A}$ is a $4$-tuple $(V,\rho,\alpha_V,\beta_V)$, where $V$ is a $\Gamma$-graded vector space, $\alpha_V,\beta_V:V \longrightarrow V$ are two even commuting linear maps and $\rho:\mathcal{A}\longrightarrow End(V)$ is an even linear map such that, for all $x,y \in \mathcal{H}(\mathcal{A})$ and $v\in V$, we have
\begin{eqnarray}
  \rho(\alpha(x))\circ \alpha_V &=& \alpha_V \circ \rho(x), \nonumber\\
  \rho(\beta(x))\circ \beta_V &=& \beta_V \circ \rho(x),\nonumber \\
  \rho([\beta(x),y])\circ \beta_V(v)&=& \rho(\alpha\beta(x))\circ \rho(y)(v)-\varepsilon(x,y)\rho(\beta(y))\circ \rho(\alpha(x))(v).\label{3.39}
\end{eqnarray}
\end{df}

The cohomology of Lie colour algebras was introduced in \cite{scheunert1979generalized}. In the following, we  define cohomology complexes of  BiHom-Lie colour   algebras.\\

Let $(\mathcal{A},[\cdot,\cdot],\varepsilon,\alpha,\beta)$ be a regular BiHom-Lie colour algebra and $(V,\rho,\alpha_V,\beta_V)$ be a representation of $\mathcal{A}$. In the sequel, we denote $\rho(x)(v)$ by a bracket $[x,v]_V$.

The set of $n$-cochains on $\mathcal{A}$ with
values in $V$, which we denote by $C^n(\mathcal{A},V)$, is the set of skewsymmetric $n$-linear maps $f:\mathcal{A}^{n}\rightarrow V $, that is
$$f(x_{1},...,x_{i},x_{i+1},...,x_{n})=-\varepsilon(x_{i},x_{i+1})f(x_{1},...,x_{i+1},x_{i},...,x_{n}),~~\forall~~ 1\leq i \leq n-1.$$
For $n=0$, we have $C^{0}(\mathcal{A},V)=V$.

We set
\begin{eqnarray*}
 C_{\alpha,\beta}^n(\mathcal{A},V) &=& \{f:\mathcal{A}^{n}\longrightarrow V : \ f\in C^n(\mathcal{A},V) \text{ and } f \circ \alpha=\alpha_V \circ f,~~f \circ \beta=\beta_V \circ f \}.
\end{eqnarray*}


We extend this definition to the case of integers $n < 0$  and set
$$C_{\alpha,\beta}^{n}(\mathcal{A},V)=\{0\}\ \text{ if } n < -1\quad \text{ and }\quad
C_{\alpha,\beta}^{0}(\mathcal{A},V)=V.$$
A map $f\in C^n(\mathcal{A},V)$ is called even (resp. of degree  $\gamma$) when $f(x_{1},...,x_{i},...,x_{n}) \in V_{\gamma_{1}+...+\gamma_{i}+...+\gamma_{n}}$ for all elements $x_i\in \mathcal{A}_{\gamma_{i}}$
 (resp. $f(x_{1},...,x_{i},...,x_{n}) \in V_{\gamma+\gamma_{1}+...+\gamma_{i}+...+\gamma_{n}}$).

A homogeneous element $f \in C_{\alpha,\beta}^{n}(\mathcal{A},V)$ is called $n$-cochain or sometimes in the literature $n$-Hom-cochain.\\
Next, for a given  integer $r$, we define the coboundary operator $\delta_{r}^{n}$.
\begin{df} We call, for $n\geq 1$ and for  any integer $r$, a $n$-coboundary operator of the  BiHom-Lie colour algebra $(\mathcal{A},[.,.],\varepsilon,\alpha,\beta)$ the linear map $\delta_{r}^{n}:C_{\alpha,\beta}^{n}(\mathcal{A},V)\longrightarrow C_{\alpha,\beta}^{n+1}(\mathcal{A},V)$ defined by
\begin{eqnarray}\label{ECHLC}
&& \delta_{r}^{n}(f)(x_{0},....,x_{n}) = \\
&&\quad  \nonumber \sum \limits_{0\leq s < t \leq n}(-1)^{t}\varepsilon(x_{s+1}+...+x_{t-1},x_{t})
 f(\beta(x_{0}),...,\beta(x_{s-1}),[\alpha^{-1}\beta(x_{s}),x_{t}],\beta(x_{s+1}),...,\widehat{x_{t}},...,\beta(x_{n}))\\
&& \nonumber \quad + \sum \limits_{s=0}^{n}(-1)^{s}\varepsilon(\gamma+x_{0}+...+x_{s-1},x_{s})[\alpha\beta^{r+n-1}(x_{s}),f(x_{0},...,\widehat{x_{s}},..,x_{n})]_{V},
\end{eqnarray}
where $f \in C_{\alpha,\beta}^{n}(\mathcal{A},V)$, $\gamma$ is the degree of $f$, $(x_{0}, ...., x_{n}) \in \mathcal{H}(\mathcal{A})^{\otimes n+1} $ and  $\widehat{x}$ indicates that the element $x$ is omitted.\\
In the sequel we assume that the BiHom-Lie colour algebra $(\mathcal{A},[.,.],\varepsilon,\alpha,\beta)$ is multiplicative.
\end{df}
For $n=1$, we have
$$\begin{array}{cccc}
  \delta_{r}^{1}: & C^{1}(\mathcal{A},V) & \longrightarrow & C^{2}(\mathcal{A},V)  \\
   & f & \longmapsto & \delta_{r}^{1}(f)
\end{array}$$
such that for two homogeneous elements $x,y$ in $\mathcal{A}$
\begin{equation}\label{cobord1}
    \delta_{r}^{1}(f)(x,y) =\varepsilon(\gamma,x)[\alpha\beta(x),f(y)]_V-\varepsilon(\gamma+x,y)[\alpha\beta(y),f(x)]_V-f([\alpha^{-1}\beta(x),y])
\end{equation}
and for $n=2$, we have
$$\begin{array}{cccc}
  \delta_{r}^{2}: & C^{2}(\mathcal{A},V) & \longrightarrow & C^{3}(\mathcal{A},V)  \\
   & f & \longmapsto & \delta_{r}^{2}(f)
\end{array}$$
such that, for three homogeneous elements $x,y,z$ in $\mathcal{A}$, we have
\begin{eqnarray}\label{2-cocycle}
\delta_{r}^2(f)(x,y,z)&=&\varepsilon(\gamma,x)[\alpha\beta^2(x),f(y,z)]_V-\varepsilon(\gamma+x,y)[\alpha\beta^2(y),f(x,z)]_V \nonumber\\
&+&\varepsilon(\gamma+x+y,z)[\alpha\beta^2(z),f(x,y)]_V-f([\alpha^{-1}\beta(x),y],\beta(z))\nonumber\\
&+&\varepsilon(y,z)f([\alpha^{-1}\beta(x),z],\beta(y))+f(\beta(x),[\alpha^{-1}\beta(y),z]).
\end{eqnarray}

\begin{lem} With the above notations, for any $f\in C^{n}_{\alpha,\beta}(\mathcal{A},V)$, we have
\begin{eqnarray*}
\delta_{r}^{n}(f)\circ \alpha &=& \alpha_V \circ \delta_{r}^{n}(f),\\
\delta_{r}^{n}(f)\circ \beta &=& \beta_V \circ \delta_{r}^{n}(f),~~\forall~~ n \geq 2
\end{eqnarray*}
Thus, we obtain a well defined map $\delta_{r}^{n}:C_{\alpha,\beta}^{n}(\mathcal{A},V)\longrightarrow C_{\alpha,\beta}^{n+1}(\mathcal{A},V).$
\end{lem}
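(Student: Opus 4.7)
The plan is to prove both identities by direct computation, evaluating $\delta_r^n(f)$ on the tuple $(\alpha(x_0),\ldots,\alpha(x_n))$ and showing term-by-term that we can factor out $\alpha_V$ to recover $\alpha_V(\delta_r^n(f)(x_0,\ldots,x_n))$. The key structural ingredients are the three commutation relations that hold by hypothesis: $\alpha\circ\beta=\beta\circ\alpha$, $\alpha([x,y])=[\alpha(x),\alpha(y)]$, and the compatibility of $f$ with $\alpha$, namely $f\circ\alpha=\alpha_V\circ f$ (applied entrywise to the $n$ arguments). The representation axiom $\rho(\alpha(x))\circ\alpha_V=\alpha_V\circ\rho(x)$, rewritten as $[\alpha(y),\alpha_V(v)]_V=\alpha_V([y,v]_V)$, handles the module-side terms.

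For the first (bracket-type) double sum in \eqref{ECHLC}, I would argue as follows. Substituting $\alpha(x_i)$ for $x_i$, each $\beta(\alpha(x_i))$ becomes $\alpha(\beta(x_i))$ because $\alpha\beta=\beta\alpha$. The bracket entry transforms as $[\alpha^{-1}\beta(\alpha(x_s)),\alpha(x_t)]=\alpha([\alpha^{-1}\beta(x_s),x_t])$ since $\alpha$ commutes with $\beta$ and preserves the bracket. Hence the inner argument of $f$ is exactly $\alpha$ applied entrywise to the original inner argument, and the hypothesis $f\circ\alpha=\alpha_V\circ f$ pulls $\alpha_V$ outside. For the second sum, the outer coefficient $\alpha\beta^{r+n-1}(\alpha(x_s))$ becomes $\alpha(\alpha\beta^{r+n-1}(x_s))$, while the inner $f(\alpha(x_0),\ldots,\widehat{\alpha(x_s)},\ldots,\alpha(x_n))$ equals $\alpha_V(f(x_0,\ldots,\widehat{x_s},\ldots,x_n))$, and then the representation identity $[\alpha(y),\alpha_V(v)]_V=\alpha_V([y,v]_V)$ again extracts $\alpha_V$. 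Since the signs and $\varepsilon$-factors do not depend on $x_i$ versus $\alpha(x_i)$ (evenness of $\alpha$ preserves degrees), all terms are multiplied by $\alpha_V$, yielding $\delta_r^n(f)\circ\alpha=\alpha_V\circ\delta_r^n(f)$.

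The proof of $\delta_r^n(f)\circ\beta=\beta_V\circ\delta_r^n(f)$ is formally identical, replacing $\alpha$ by $\beta$ everywhere: one uses $\beta\alpha=\alpha\beta$, $\beta([x,y])=[\beta(x),\beta(y)]$, $f\circ\beta=\beta_V\circ f$, and $\rho(\beta(x))\circ\beta_V=\beta_V\circ\rho(x)$. I do not anticipate any genuine obstacle; the exercise is purely bookkeeping, and the hypotheses have been arranged precisely so that the decoration $\alpha\beta^{r+n-1}$ in the coefficient of the second sum, the single $\beta$'s on the undistinguished entries, and the $\alpha^{-1}\beta$ on the distinguished entry $x_s$ all transform coherently under both $\alpha$ and $\beta$. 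The mildly tedious point is just to verify that the $\varepsilon$-coefficients and signs are unaffected, which follows because $\alpha$ and $\beta$ are even; once that is observed, the two identities drop out immediately, and together with the manifest skew-symmetry of $\delta_r^n(f)$ they establish that $\delta_r^n$ maps $C_{\alpha,\beta}^n(\mathcal{A},V)$ into $C_{\alpha,\beta}^{n+1}(\mathcal{A},V)$.
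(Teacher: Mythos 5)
Your proposal is correct and matches the paper's own proof: the paper likewise evaluates $\delta_r^n(f)$ on $(\alpha(x_0),\ldots,\alpha(x_n))$, uses $\alpha\beta=\beta\alpha$ and multiplicativity to rewrite each inner argument as $\alpha$ applied entrywise, pulls out $\alpha_V$ via $f\circ\alpha=\alpha_V\circ f$ and the representation axiom $\rho(\alpha(x))\circ\alpha_V=\alpha_V\circ\rho(x)$, and then repeats the same bookkeeping with $\beta$.
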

\begin{proof} Let $f\in C^{n}_{\alpha,\beta}(\mathcal{A},V)$ and $(x_{0},....,x_{n})\in \mathcal{H}(\mathcal{A})^{\otimes n+1}$, we have
\begin{small}
\begin{eqnarray*}
&& \delta_{r}^{n}(f)\circ \alpha(x_{0},....,x_{n})\\
&&=\small \small \delta_{r}^{n}(f)(\alpha(x_{0}),....,\alpha(x_{n}))\\
&&=\small \sum \limits_{0\leq s < t \leq n}(-1)^{t}\varepsilon(x_{0}+...+x_{t-1},x_{t})f(\alpha\beta(x_{0}),...,
\alpha\beta(x_{s-1}), [\alpha\alpha^{-1}\beta(x_{s}),\alpha(x_{t})],\alpha\beta(x_{s+1}),...,\widehat{x_{t}},...,\alpha\beta(x_{n}))\\
&&+\sum \limits_{s=0}^{n}(-1)^{s}\varepsilon(\gamma+x_{0}+...+x_{s-1},x_{s})
[\alpha^2 \beta^{n-1+r}(x_{s}),f(\alpha(x_{0}),...,\widehat{x_{s}},..,\alpha(x_{n}))]_{V}\\
&&=\small \sum \limits_{0\leq s < t \leq n}(-1)^{t}\varepsilon(x_{0}+...+x_{t-1},x_{t})\alpha_V \circ f(\beta(x_{0}),...,
\beta(x_{s-1}),[\alpha^{-1}\beta(x_{s}),x_{t}],\beta(x_{s+1}),...,\widehat{x_{t}},...,\beta(x_{n}))\\
&&+\sum \limits_{s=0}^{n}(-1)^{s}\varepsilon(\gamma+x_{0}+...+x_{s-1},x_{s})
\alpha_V[\alpha \beta^{n-1+r}(x_{s}),f(x_{0},...,\widehat{x_{s}},..,x_{n})]_{V}\\
&& =\alpha_V \circ \delta_{r}^{n}(f)(x_{0},....,x_{n}).
\end{eqnarray*}
\end{small}
Then $\delta_{r}^{n}(f)\circ \alpha=\alpha_V \circ \delta_{r}^{n}(f)$.\\
Similarly, we have
\begin{eqnarray*}
 \delta_{r}^{n}(f)\circ \beta(x_{0},....,x_{n})
&=& \beta_V \circ \delta_{r}^{n}(f)(x_{0},....,x_{n}),
\end{eqnarray*}
 which completes the proof.
\end{proof}
\begin{thm}\label{ker} Let $(\mathcal{A},[.,.],\varepsilon,\alpha,\beta)$ be a multiplicative BiHom-Lie colour algebra and $(V,\alpha_V,\beta_V)$ be an $\mathcal{A}$-module.
Then the pair $(\bigoplus\limits_{n\geq 0}C_{\alpha,\beta}^{n}, \delta_{r}^{n})$ is a cohomology complex. That is  the maps $\delta_{r}^{n}$ satisfy  $\delta_{r}^{n}\circ \delta_{r}^{n-1}=0,~~\forall~~ n \geq 2,\forall~~ r \geq 1.$
\end{thm}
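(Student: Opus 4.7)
The plan is to establish $\delta_r^n \circ \delta_r^{n-1} = 0$ by direct expansion and cancellation, which is the standard strategy for verifying that a cochain complex is well defined. Starting from $f \in C_{\alpha,\beta}^{n-1}(\mathcal{A},V)$, I would substitute the formula \eqref{ECHLC} for $\delta_r^{n-1}(f)$ inside the expression $\delta_r^n(\delta_r^{n-1}(f))(x_0,\ldots,x_n)$. The result is a double sum whose terms fall into three classes according to whether each of the two applications of $\delta$ produces an \emph{internal} bracket inside the argument of $f$ or an \emph{external} $\rho$-action term with coefficient $[\alpha\beta^{n-1+r}(x_s),\,\cdot\,]_V$.

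The internal-internal terms split further into two subtypes. When both bracket insertions involve disjoint pairs of indices, each such configuration occurs twice with opposite signs (once for each order in which the two brackets are introduced), so these terms cancel pairwise. When the two insertions interact, they produce doubly nested brackets such as $[[\alpha^{-1}\beta(x_i),x_j],x_k]$ in a single slot of $f$; the three corresponding terms combine to zero via the $\varepsilon$-BiHom-Jacobi identity applied to the three indices $i,j,k$. The mixed terms (one internal, one external) and the external-external terms cancel together through the module identity \eqref{3.39}, which rewrites $\rho([\beta(x_s),x_t])\circ\beta_V$ as $\rho(\alpha\beta(x_s))\rho(x_t) - \varepsilon(x_s,x_t)\rho(\beta(x_t))\rho(\alpha(x_s))$, thereby pairing each external-external commutator with a unique mixed term of opposite sign.

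The main obstacle will be the bookkeeping of signs: tracking the alternating $(-1)^{s+t}$ factors together with the bicharacter coefficients $\varepsilon(x_{s+1}+\cdots+x_{t-1},x_t)$ and the degree shift $\varepsilon(\gamma+x_0+\cdots+x_{s-1},x_s)$, and verifying that each prospective cancelling pair truly has opposite overall sign. The multiplicativity of $\alpha$ and $\beta$, together with the fact that they commute with each other and with the bracket, is used repeatedly to match the twist $\alpha\beta^{n-1+r}$ coming from the outer $\delta_r^n$ against the twist $\alpha\beta^{n-2+r}$ coming from the inner $\delta_r^{n-1}$ (after one extra application of $\beta$). A clean implementation splits the expansion according to the relative order of the distinguished indices $s,t$ across the outer and inner $\delta$, and then verifies in each of the resulting subcases that the $\varepsilon$-factors, signs, and $\alpha,\beta$-twists match between paired terms.

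Once this reindexing is in place, every remaining cancellation reduces either to the $\varepsilon$-BiHom-Jacobi identity or to \eqref{3.39}, both of which are available by hypothesis, so no additional input is needed beyond the definitions already in the paper. The computation is entirely routine in spirit but combinatorially heavy; organising the terms by the classes above, rather than attempting a single brute-force expansion, is the key to keeping the argument readable.
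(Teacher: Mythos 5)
Your proposal is correct and follows essentially the same route as the paper's own proof: expand $\delta_r^n\circ\delta_r^{n-1}(f)$ into the double sum, cancel the disjoint internal--internal insertions pairwise, kill the nested-bracket terms with the $\varepsilon$-BiHom-Jacobi condition, and use the representation identity \eqref{3.39} to absorb the external--external module actions against the terms where the inserted bracket sits in the coefficient of the module action, with the remaining mixed terms cancelling pairwise. The only difference is bookkeeping detail, which the paper carries out term by term exactly along the lines you describe.
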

 \begin{proof} For any $f \in C^{n-1}(\mathcal{A},V)$, we have
 \begin{small}
 \begin{eqnarray}\label{1}
  && \delta_{r}^{n}\circ \delta_{r}^{n-1}(f)(x_{0},\dots,x_{n})\nonumber\\
&=& \quad\quad \sum_{s < t}(-1)^{t}\varepsilon(x_{0}+\dots+x_{t-1},x_{t})
 \delta_{r}^{n-1}(f)(\beta(x_{0}),\dots,\beta(x_{s-1}),[\alpha^{-1}\beta(x_{s}),x_{t}],\beta(x_{s+1}),\dots,
\widehat{x_{t}},\dots,\beta(x_{n}))\nonumber \\
\end{eqnarray}
\begin{eqnarray}\label{2}
     &+&\sum_{s=0}^{n}(-1)^{s}\varepsilon(f+x_{0}+...+x_{s-1},x_{s})[\alpha\beta^{r+n-1}(x_{s}),
 \delta_{r}^{n-1}(f)(x_{0},...,\widehat{x_{s}},..,x_{n})]_{V}.
\end{eqnarray}
From $(\ref{1})$ we have
\begin{small}
\begin{eqnarray*}
&&\delta_{r}^{n-1}(f)(\beta(x_{0}),\dots,\beta(x_{s-1}),[\alpha^{-1}\beta(x_{s}),x_{t}],\beta(x_{s+1}),\dots,
\widehat{x_{t}},\dots,\beta(x_{n}))\\
&=&\sum \limits_{s^{'} < t^{'}< s }(-1)^{t^{'}}\varepsilon(x_{s^{'}+1}+...+x_{t^{'}-1},x_{t^{'}})f\Big(\beta^{2}(x_{0})
,...,\beta^{2}(x_{s^{'}-1}),[\alpha^{-1}\beta\beta(x_{s^{'}}),\beta(x_{t^{'}})],\beta^{2}(x_{s^{'}+1}),
\end{eqnarray*}
 \begin{equation}\label{01}
...,\widehat{x_{t^{'}}},...,\beta^{2}(x_{s-1}),\beta([\alpha^{-1}\beta(x_{s}),x_{t}]),\beta^{2}(x_{s+1}),...,\widehat{x_{t}},...,\beta^{2}(x_{n}) \Big)
\end{equation}
\begin{eqnarray*}
&+&\sum \limits_{s^{'}< s }(-1)^{s}\varepsilon(x_{s^{'}+1}+...+x_{s-1},x_{s})
 \end{eqnarray*}
 \begin{eqnarray}\label{02}
&& f\Big(\beta^{2}(x_{0}),...,\beta^{2}(x_{s^{'}-1}),[\beta(x_{s^{'}-1}),[\alpha^{-2}\beta^2(x_{s}),\alpha^{-1}\beta(x_{t})]],\beta^{2}(x_{s^{'}+1}),...,
\widehat{x_{s,t}},...,\beta^{2}(x_{n}) \Big)
\end{eqnarray}
\begin{eqnarray*}
&+&\sum \limits_{s^{'} < s< t^{'}< t }(-1)^{t^{'}}\varepsilon(x_{s^{'}+1}+...+[x_{s},x_{t}]+...+x_{t^{'}-1},x_{t^{'}})
 \end{eqnarray*}
 \begin{equation}\label{03}
f \Big(\beta^{2}(x_{0}),...,\beta^{2}(x_{s^{'}-1}),[\alpha^{-1}\beta\beta(x_{s^{'}}),\beta(x_{t^{'}})],\beta^{2}(x_{s^{'}+1}),\beta([\alpha^{-1}\beta(x_{s}),x_{t}])
,...,\widehat{x_{t^{'}}},...,\beta^{2}(x_{n}) \Big)
\end{equation}
\begin{eqnarray*}
&+&\sum \limits_{s^{'}< s< t< t^{'}}
(-1)^{t^{'}}\varepsilon(x_{s^{'}+1}+...x_{s-1}+[x_{s},x_{t}]+x_{s+1}+...+\widehat{x_{t}}+...+x_{t^{'}-1},x_{t^{'}})\end{eqnarray*}
\begin{equation}\label{04}
 f \Big(\beta^{2}(x_{0}),...,\beta^{2}(x_{s^{'}-1}),[\alpha^{-1}\beta\beta(x_{s^{'}}),\beta(x_{t^{'}})],\beta^{2}(x_{s^{'}+1})
,\beta([\alpha^{-1}\beta(x_{s}),x_{t}]),...,\widehat{x_{t}},...,\widehat{x_{t^{'}}},...,\beta^{2}(x_{n}) \Big)
\end{equation}
\begin{small}
\begin{eqnarray}\label{05}
+\sum \limits_{ s< t^{'}< t}
(-1)^{t^{'}}\varepsilon(x_{s+1}+...+x_{t^{'}-1},x_{t^{'}}) f \Big(\beta^{2}(x_{0}),...,[[\alpha^{-2}\beta^2(x_{s}),\alpha^{-1}\beta(x_{t})],\beta(x_{t^{'}})],
\beta^{2}(x_{s+1}),...,\widehat{x_{t,t^{'}}},...,\beta^{2}(x_{n}) \Big)
\end{eqnarray}
\end{small}
\begin{eqnarray*}
&+&\sum \limits_{ s<t <t^{'}}(-1)^{t^{'}-1}\varepsilon(x_{s+1}+...+\widehat{x_{t}}+...+x_{t^{'}-1},x_{t^{'}})\end{eqnarray*}
\begin{eqnarray}\label{06}
f\Big(\beta^{2}(x_{0}),...,\beta^{2}(x_{s-1}),[[\alpha^{-2}\beta^2(x_{s}),\alpha^{-1}\beta(x_{t})],\beta(x_{t^{'}})],\beta^{2}(x_{s+1}),...,\widehat{x_{t,t^{'}}}
,...,\beta^{2}(x_{n}) \Big)
\end{eqnarray}
\begin{eqnarray*}
&+&\sum \limits_{ s<s^{'}< t^{'}< t}(-1)^{t^{'}}\varepsilon(x_{s^{'}+1}+...+x_{t^{'}-1},x_{t^{'}})
f\Big(\beta^{2}(x_{0}),...,\beta^{2}(x_{s-1}),\beta([\alpha^{-1}\beta(x_{s}),x_{t}]),\end{eqnarray*}
\begin{eqnarray}\label{07}
[\alpha^{-1}\beta\beta(x_{s^{'}}),\beta(x_{t^{'}})],...,\widehat{x_{t^{'}}},...,\widehat{x_{t}},...,\beta^{2}(x_{n}) \Big)
\end{eqnarray}
\begin{eqnarray*}
&+&\sum \limits_{ s<s^{'}< t<t^{'}}(-1)^{t^{'}}\varepsilon(x_{s^{'}+1}+...+\widehat{x_{t}}+...+x_{t^{'}-1},x_{t^{'}})
\end{eqnarray*}
\begin{eqnarray}\label{08}
f \Big(\beta^{2}(x_{0}),...,\beta^{2}(x_{s-1}),\beta([\alpha^{-1}\beta(x_{s}),x_{t}]),\beta^{2}(x_{s+1}),..., [\alpha^{-1}\beta\beta((x_{s^{'}}),\beta(x_{t^{'}})],...,\widehat{x_{t}},...,\widehat{x_{t^{'}}},...,\beta^{2}(x_{n}) \Big)
\end{eqnarray}
\begin{eqnarray*}
&+&\sum_{ t<s^{'}<t^{'}}(-1)^{t^{'}}\varepsilon(x_{s^{'}+1}+...+\widehat{x_{t,t^{'}}}+...+x_{t^{'}-1},x_{t^{'}})
\end{eqnarray*}
\begin{eqnarray}\label{O9}
f \Big(\beta^{2}(x_{0}),...,\beta^{2}(x_{s-1}),\beta([\alpha^{-1}\beta(x_{s}),x_{t}]),\beta^{2}(x_{s+1}),...,\widehat{x_{t}},..., [\alpha^{-1}\beta\beta(x_{s^{'}}),\beta(x_{t^{'}})],...,\widehat{x_{t^{'}}},...,\beta^{2}(x_{n}) \Big)
\end{eqnarray}
\begin{eqnarray}\label{010}
&+&\sum \limits_{ 0< s^{'}< s}(-1)^{s^{'}}\varepsilon(\gamma+x_{0}+...+x_{s^{'}-1},x_{s^{'}}) [\alpha^{r+n-1}(x_{s^{'}}),f(\beta(x_{0}),...\widehat{x_{s^{'}}}
,[x_{s},x_{t}],...,\widehat{x_{t^{'}}},...,\beta(x_{n}))]_{V}\nonumber \\
\end{eqnarray}
\begin{eqnarray*}
&+&(-1)^{s}\varepsilon(\gamma+x_{0}+...+x_{s-1},[x_{s},x_{t}])
\end{eqnarray*}
\begin{eqnarray}\label{011}
[\alpha\beta^{r+n-3}([\alpha^{-1}\beta(x_{s}),x_{t}]),
f(\beta(x_{0}),...,\widehat{[x_{s},x_{t}]},\beta(x_{s+1}),...,\widehat{x_{t}},...,\beta(x_{n}))]_{V}
\end{eqnarray}
\begin{eqnarray*}
& +&\sum \limits_{ s< s^{'}< t}(-1)^{s^{'}}\varepsilon(\gamma+x_{0}+...+[x_{s},x_{t}]+...+x_{s^{'}-1},x_{s^{'}})\end{eqnarray*}
\begin{eqnarray}\label{012}
&&[\alpha\beta^{r+n-2}(x_{s^{'}}),f(\beta(x_{0}),...,[\alpha^{-1}\beta(x_{s}),x_{t}],...,\widehat{x_{s^{'},t}},...,\beta(x_{n}))]_{V}
\end{eqnarray}
\begin{eqnarray*}
&+&\sum \limits_{ t< s^{'}}(-1)^{s^{'}}\varepsilon(\gamma+x_{0}+..[x_{s},x_{t}]+...+\widehat{x_{t}}+...+x_{s^{'}-1},x_{s^{'}})\end{eqnarray*}
\begin{eqnarray}\label{013}
&&[\alpha\beta^{r+n-2}(x_{s^{'}}),f(\beta(x_{0}),...,[\alpha^{-1}\beta(x_{s}),x_{t}],...,\widehat{x_{t,s^{'}}},...,\beta(x_{n}))]_{V}.
\end{eqnarray}
The identity $(\ref{2})$ implies that
\begin{eqnarray*}[\alpha\beta^{r+n-1}(x_{s}), \delta_{r}^{n-1}(f)(x_{0},...,\widehat{x_{s}},..,x_{n})]_{V}
&=&[\alpha\beta^{r+n-1}(x_{s}),\sum \limits_{s^{'} < t^{'}< s }(-1)^{t^{'}}\varepsilon(x_{s^{'}+1}+...+x_{t^{'}-1},x_{t^{'}})
\end{eqnarray*}
\begin{eqnarray*}
&& f \Big(\beta(x_{0}),...,\beta(x_{s^{'}-1}),[\alpha^{-1}\beta(x_{s^{'}}),x_{t^{'}}],\beta(x_{s^{'}+1}),...,\widehat{x_{s^{'},t^{'},t}},\beta(x_{s+1}),...,\beta(x_{n}) \Big)]_{V}
\end{eqnarray*}
\begin{eqnarray*}
&+&[\alpha\beta^{r+n-1}(x_{s}),\sum_{s^{'} < s<t }(-1)^{t^{'}-1}\varepsilon(x_{s^{'}+1}+...+\widehat{x_{s}}+...+x_{t^{'}-1},x_{t^{'}})\end{eqnarray*}
\begin{equation}\label{015}
f(\beta(x_{0}),...,\beta(x_{s^{'}-1}),[\alpha^{-1}\beta(x_{s^{'}}),x_{t^{'}}],\beta(x_{s^{'}+1}),...,\widehat{x_{t,s^{'}}},...,\beta(x_{n}))]_{V}
\end{equation}
\begin{eqnarray*}&+&[\alpha\beta^{r+n-1}(x_{s}),\sum \limits_{s < s^{'}<t^{'} }(-1)^{t^{'}}\varepsilon(x_{s^{'}+1}+...+x_{t^{'}-1},x_{t^{'}})\end{eqnarray*}
\begin{equation}\label{016}
f(\beta(x_{0}),...,\widehat{x_{s}},...,\beta(x_{s^{'}-1}),[\alpha^{-1}\beta(x_{s^{'}}),x_{t^{'}}],\beta(x_{s^{'}+1}),...,\widehat{x_{t^{'}}},...,\beta(x_{n}))]_{V}
\end{equation}
\begin{equation}\label{017}
+[\alpha\beta^{r+n-1}(x_{s}),\sum_{s^{'}=0}^{s-1}(-1)^{s}\varepsilon(\gamma+x_{0}+...+x_{s^{'}-1},x_{s^{'}})[\alpha\beta^{r+n-2}(x_{s^{'}}),
f(x_{0},...,\widehat{x_{s^{'},s}},...,x_{n})]_{V}]_{V}
\end{equation}

\begin{align}
&+[\alpha\beta^{r+n-1}(x_{s}),\sum_{s^{'}=s+1}^{n}(-1)^{s^{'}-1}\varepsilon(\gamma+x_{0}+...+\widehat{x_{s}}+...+x_{s^{'}-1},x_{s^{'}})
[\alpha\beta^{n+r-2}(x_{s^{'}}),f(x_{0},...,\widehat{x_{s^{'},s}},...,x_{n})]_{V}]_{V}.
\nonumber \\ \label{018} &&
\end{align}
\end{small}
By the $\varepsilon$-Bihom-Jacobi condition, we obtain
$$\sum \limits_{ s<t }(-1)^{t}\varepsilon(x_{s+1}+...+x_{t-1},x_{t})((\ref{02})+(\ref{05})+(\ref{06}))=0.$$
Also, we have
\begin{eqnarray*}
  (\ref{011}) &=&[\alpha\beta^{r+n-3}([\alpha^{-1}\beta(x_{s}),x_{t}]),
f(\beta(x_{0}),...,\widehat{[x_{s},x_{t}]},\beta(x_{s+1}),...,\widehat{x_{t}},...,\beta(x_{n}))]_{V} \\
   &=& [\alpha\beta^{n+r-1}(x_{s}),[\alpha\beta^{r+n-2}(x_{t}),f(x_{0},...,\widehat{x_{s,t}},...,x_{n})]_{V}]_{V}\\
   &-&[\alpha\beta^{r+n-1}(x_{t}),[\alpha\beta^{r+n-2}(x_{s}),f(x_{0},...,\widehat{x_{s,t}},...,x_{n})]_{V}]_{V}.
\end{eqnarray*}
Thus
\begin{eqnarray*}
&&\ \ \ \sum \limits_{ s < t }(-1)^{t}\varepsilon(x_{s+1}+...+x_{t-1},x_{t})(\ref{011})
+\sum \limits_{s=0}^{n}(-1)^{s}\varepsilon(\gamma+x_{0}+...+x_{s-1},x_{s})(\ref{017})\\
&&\ \ \ +\sum \limits_{s=0}^{n}(-1)^{s}\varepsilon(\gamma+x_{0}+...+x_{s-1},x_{s})(\ref{018})=0.
\end{eqnarray*}
By a simple calculation, we get
\begin{eqnarray*}
\sum \limits_{ s < t }(-1)^{t}\varepsilon(x_{s+1}+...+x_{t-1},x_{t})(\ref{010})
 +\sum \limits_{s=0}^{n}(-1)^{s}\varepsilon(\gamma+x_{0}+...+x_{s-1},x_{s})(\ref{016})&=&0,\\
 \sum \limits_{ s < t }(-1)^{t}\varepsilon(x_{s+1}+...+x_{t-1},x_{t})(\ref{013})
+\sum \limits_{s=0}^{n}(-1)^{s}\varepsilon(\gamma+x_{0}+...+x_{s-1},x_{s})(\ref{015})&=&0,
\end{eqnarray*}
and
\begin{eqnarray*}&&\ \ \ \sum \limits_{ s < t }(-1)^{t}\varepsilon(x_{s+1}+...+x_{t-1},x_{t})((\ref{03})+(\ref{08}))\\
&&\ \ \ =\sum \limits_{ s < t }(-1)^{t}\varepsilon(x_{s+1}+...+x_{t-1},x_{t})\Big(\sum \limits_{s^{'} < s< t^{'}< t} (-1)^{t^{'}} \varepsilon(x_{s^{'}+1}+...+[x_{s},x_{t}]+...+x_{t^{'}-1},x_{t^{'}})\\
&&\ \ \ \quad f(\alpha^{2}(x_{0}),..., \alpha^{2}(x_{s^{'}-1}),[\alpha(x_{s^{'}}),\alpha(x_{t^{'}})] ,\alpha^{2}(x_{s^{'}+1})
,\alpha([x_{s},x_{t}]),..., \widehat{x_{t^{'}}},...,\alpha^{2}(x_{n}))\Big)\\
&&\ \ \ +\sum \limits_{ s < t }(-1)^{t}\varepsilon(x_{s+1}+...+x_{t-1},x_{t})\Big( \sum_{ s<s^{'}< t<t^{'}}
(-1)^{t^{'}}\varepsilon(x_{s^{'}+1}+...+\widehat{x_{t}}+...+x_{t^{'}-1},x_{t^{'}})\\
&&\ \ \ \quad f(\alpha^{2}(x_{0}),...\alpha^{2}(x_{s-1}),\alpha([x_{s},x_{t}]), \alpha^{2}(x_{s+1}),...,\widehat{x_{t,t^{'}}},...,[\alpha(x_{s^{'}}),\alpha(x_{t^{'}})],...,\alpha^{2}(x_{n}))\Big)\\
&&\ \ \ =0.\end{eqnarray*}\end{small}
Similarly, we have
$$\sum \limits_{ s < t }(-1)^{t}\varepsilon(x_{s+1}+...+x_{t-1},x_{t})((\ref{01})+(\ref{O9}))=0$$
and
 $$\sum \limits_{ s < t }(-1)^{t}\varepsilon(x_{s+1}+...+x_{t-1},x_{t})((\ref{04})+(\ref{07}))=0.$$
Therefore  $\delta_{r}^{n}\circ \delta_{r}^{n-1}=0$, which completes the proof.
\end{proof}

Let $Z_{r}^{n}(\mathcal{A},V)$ (resp. $B_{r}^{n}(\mathcal{A},V)$) denote the kernel of $\delta_{r}^{n}$ (resp. the image of $\delta_{r}^{n-1}$). The spaces $Z_{r}^{n}(\mathcal{A},V)$ and $B_{r}^{n}(\mathcal{A},V)$ are graded submodules of $C_{\alpha,\beta}^{n}(\mathcal{A},V)$ and according to Proposition \ref{ker}, we have
\begin{equation}\label{BZ}
    B_{r}^{n}(\mathcal{A},V)\subseteq Z_{r}^{n}(\mathcal{A},V).
\end{equation}
The elements of $Z_{r}^{n}(\mathcal{A},V)$ are called $n$-cocycles, and  the elements of $B_{r}^{n}(\mathcal{A},V)$ are called the $n$-coboundaries. Thus, we define a so-called cohomology groups
$$H_{r}^{n}(\mathcal{A},V)=\frac{Z_{r}^{n}(\mathcal{A},V)}{B_{r}^{n}(\mathcal{A},V)}.$$
We denote by $H_{r}^{n}(\mathcal{A},V)=\bigoplus_{\gamma \in \Gamma}(H_{r}^{n}(\mathcal{A},V))_{\gamma}$ the space of all $r$-cohomology group of degree $\gamma$ of the  BiHom-Lie colour algebra  $\mathcal{A}$ with values in $V$.\\
Two elements of $Z_{r}^{n}(\mathcal{A},V)$ are said to be cohomologous if their residue classes modulo $B_{r}^{n}(\mathcal{A},V)$ coincide, that is if their difference lies in $B_{r}^{n}(\mathcal{A},V)$.

\subsection{Adjoint representations of  BiHom-Lie colour algebras}
In this section, we generalize to BiHom-Lie colour algebras some results from \cite{sadaoui2011cohomology} and \cite{sheng2010representations}. Let $(\mathcal{A},[.,.],\varepsilon,\alpha,\beta)$ be a regular BiHom-Lie colour algebra. We consider that $\mathcal{A}$ represents on itself
via the bracket with respect to the morphisms $\alpha,\beta$.

Now, we discuss  adjoint representations of a  BiHom-Lie colour algebra.

The adjoint representations are generalized in the following way.
\begin{df} An $\alpha^{s}\beta^{l}$-adjoint representation, denoted  by $ad_{s,l}$, of a  BiHom-Lie colour algebra $(\mathcal{A},[.,.],\varepsilon,\alpha,\beta)$  is defined  as
$$ad_{s,l}(a)(x)=[\alpha^{s}\beta^l(a),x],~~\forall~~ a,x \in \mathcal{H}(\mathcal{A}).$$
\end{df}
\begin{lem} With the above notations, we have  $(\mathcal{A},ad_{s,l}(.)(.),\alpha,\beta)$ is a representation of the  BiHom-Lie colour algebra $(\mathcal{A},[.,.],\varepsilon,\alpha,\beta)$. It satisfies
\begin{eqnarray*}
ad_{s,l}(\alpha(x))\circ \alpha &=& \alpha\circ ad_{s,l}(x),\\
ad_{s,l}(\beta(x))\circ \beta &=& \beta\circ ad_{s,l}(x),\\
ad_{s,l}([\beta(x),y])\circ \beta &=& ad_{s,l}(\alpha \beta(x))\circ ad_{s,l}(y)-\varepsilon(x,y)ad_{s,l}(\beta(y))\circ ad_{s,l}(x).
\end{eqnarray*}
\end{lem}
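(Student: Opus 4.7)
The plan is to verify the three claimed identities one after another. The first two are direct consequences of the fact that $\alpha$ and $\beta$ are commuting, even algebra morphisms of the bracket. Concretely, for any homogeneous $y\in \mathcal{A}$,
\begin{eqnarray*}
ad_{s,l}(\alpha(x))\circ \alpha(y) &=& [\alpha^{s+1}\beta^{l}(x),\alpha(y)]\\
&=& \alpha([\alpha^{s}\beta^{l}(x),y])\\
&=& \alpha\circ ad_{s,l}(x)(y),
\end{eqnarray*}
where the middle step uses $\alpha\circ \beta=\beta\circ \alpha$ and $\alpha([u,v])=[\alpha(u),\alpha(v)]$. The identity for $\beta$ is proved in the same way using $\beta([u,v])=[\beta(u),\beta(v)]$.

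For the third relation, I would apply both sides to a homogeneous element $z\in \mathcal{A}$ and expand. Using that $\alpha^{s}\beta^{l}$ is a bracket morphism, the left-hand side becomes
\[
ad_{s,l}([\beta(x),y])\circ\beta(z)=[\alpha^{s}\beta^{l}([\beta(x),y]),\beta(z)]=[[\alpha^{s}\beta^{l+1}(x),\alpha^{s}\beta^{l}(y)],\beta(z)],
\]
whereas the right-hand side unfolds to
\[
[\alpha^{s+1}\beta^{l+1}(x),[\alpha^{s}\beta^{l}(y),z]]-\varepsilon(x,y)\,[\alpha^{s}\beta^{l+1}(y),[\alpha^{s}\beta^{l}(x),z]].
\]
So it suffices to show these two expressions agree.

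To produce such an equality I plan to invoke the $\varepsilon$-BiHom-Jacobi identity with the substitutions $u=\alpha^{s-1}\beta^{l-1}(x)$, $v=\alpha^{s-1}\beta^{l-1}(y)$, $w=\beta^{-1}(z)$, which is legitimate because $\mathcal{A}$ is regular and hence $\alpha,\beta$ are invertible. The Jacobi identity then reads
\[
\varepsilon(z,x)[\beta^{2}(u),[\beta(v),\alpha(w)]]+\varepsilon(x,y)[\beta^{2}(v),[\beta(w),\alpha(u)]]+\varepsilon(y,z)[\beta^{2}(w),[\beta(u),\alpha(v)]]=0,
\]
and, exploiting that $\alpha^{s}\beta^{l}$ preserves brackets, two of the three cyclic terms will coincide (up to sign) with the two terms of the right-hand side above. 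The remaining third term, which carries the bracket $[\beta^{2}(w),[\beta(u),\alpha(v)]]$, is first reshaped by the $\varepsilon$-BiHom-skewsymmetry $[\beta(a),\alpha(b)]=-\varepsilon(a,b)[\beta(b),\alpha(a)]$ so that its outer bracket lands on $\beta(z)$ on the right, matching the left-hand side.

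The main obstacle will be the bookkeeping of bicharacter factors and the correct exponents of $\alpha$ and $\beta$. In particular, the outer bracket of the left-hand side is $[[\cdots],\beta(z)]$ rather than the $[\beta(\cdot),\alpha(\cdot)]$ pattern dictated by the $\varepsilon$-BiHom-Jacobi identity, so aligning it requires one careful use of $\varepsilon$-BiHom-skewsymmetry (after factoring out appropriate powers of $\alpha,\beta$) together with the bicharacter manipulation $\varepsilon(x+y,z)=\varepsilon(x,z)\varepsilon(y,z)$ to clear the signs. Once the indices and bicharacters are aligned, the equality follows directly from the Jacobi identity and the proof is complete.
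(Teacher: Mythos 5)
Your verification of the first two identities is correct and coincides with the paper's computation. The problem is the third identity. The substance of the lemma is that $ad_{s,l}$ satisfies the representation condition \eqref{3.39}, whose last factor is $\rho(\beta(y))\circ\rho(\alpha(x))$; the displayed formula in the lemma (with $ad_{s,l}(x)$ instead of $ad_{s,l}(\alpha(x))$) is a misprint, and the paper's own proof establishes the $\alpha(x)$ version. You expand the right-hand side literally, obtaining $[\alpha^{s+1}\beta^{l+1}(x),[\alpha^{s}\beta^{l}(y),z]]-\varepsilon(x,y)[\alpha^{s}\beta^{l+1}(y),[\alpha^{s}\beta^{l}(x),z]]$, whereas the identity that actually follows from the $\varepsilon$-BiHom-Jacobi condition has $[\alpha^{s+1}\beta^{l}(x),z]$ in the last inner bracket. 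The difference between the two versions amounts to the identity $[\beta(y),[x,z]]=[\beta(y),[\alpha(x),z]]$ for all homogeneous $x,y,z$, which is false in general for a regular BiHom-Lie colour algebra with $\alpha\neq \mathrm{Id}$ (it already fails in the twisted $osp(1,2)$ example with $\lambda^{2}\neq 1$). So the equality you set out to prove cannot be derived from the Jacobi identity, and no bookkeeping of bicharacters will close it.

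Relatedly, your proposed substitution $u=\alpha^{s-1}\beta^{l-1}(x)$, $v=\alpha^{s-1}\beta^{l-1}(y)$, $w=\beta^{-1}(z)$ does not align with either version: the first Jacobi term then reads $\varepsilon(z,x)[\alpha^{s-1}\beta^{l+1}(x),[\alpha^{s-1}\beta^{l}(y),\alpha\beta^{-1}(z)]]$, and no single application of a map $\alpha^{a}\beta^{b}$ to this bracket can produce $[\alpha^{s+1}\beta^{l+1}(x),[\alpha^{s}\beta^{l}(y),z]]$, because the required shifts on the $x$-slot and the $y$-slot differ. The working choice (implicitly the paper's) is asymmetric in $\alpha$: apply the Jacobi condition to $\alpha^{s+1}\beta^{l-1}(x)$, $\alpha^{s}\beta^{l-1}(y)$, $\alpha^{-1}(z)$, after first rewriting $ad_{s,l}(a)(b)=-\varepsilon(a,b)[\alpha^{-1}\beta(b),\alpha^{s+1}\beta^{l-1}(a)]$ by BiHom-skewsymmetry so that the left-hand side $ad_{s,l}([\beta(x),y])(\beta(z))$ becomes $-\varepsilon(x+y,z)[\alpha^{-1}\beta^{2}(z),[\alpha^{s+1}\beta^{l}(x),\alpha^{s+1}\beta^{l-1}(y)]]$; then two Jacobi terms give exactly $ad_{s,l}(\alpha\beta(x))\circ ad_{s,l}(y)-\varepsilon(x,y)\,ad_{s,l}(\beta(y))\circ ad_{s,l}(\alpha(x))$ applied to $z$. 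To repair your argument, correct the target identity to the form required by \eqref{3.39} and redo the substitution with these exponents.
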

\begin{proof} First, the result follows from
\begin{eqnarray*}
  ad_{s,l}(\alpha(x))(\alpha(y)) =[\alpha^{s}\beta^l(\alpha(x)),\alpha(y)]
   = \alpha([\alpha^{s}\beta^l(x),y])
   =  \alpha\circ ad_{s,l}(x)(y).
\end{eqnarray*}
Similarly, we have
\begin{eqnarray*}
  ad_{s,l}(\beta(x))(\beta(y)) &=&  \beta \circ ad_{s,l}(x)(y).
\end{eqnarray*}
Note that the $\varepsilon$-BiHom skew symmetry condition implies
 \begin{eqnarray*}
  ad_{s,l}(x)(y) &=&  -\varepsilon(x,y)[\alpha^{-1}\beta(y),\alpha^{s+1}\beta^{l-1}(x)].
\end{eqnarray*}
On one hand, we have
\begin{small}
\begin{eqnarray*}
  ad_{s,l}([\beta(x),y])(\beta(z)) &=& -\varepsilon(x+y,z)[\alpha^{-1}\beta^2(z),\alpha^{s+1}\beta^{l-1}([\beta(x),y])]\\
   &=&-\varepsilon(x+y,z)[\alpha^{-1}\beta^2(z),[\alpha^{s+1}\beta^l(x),\alpha^{s+1}\beta^{l-1}(y)]].
\end{eqnarray*}
\end{small}
On the other hand, we have
\begin{small}
\begin{eqnarray*}
  &&\Big(ad_{s,l}(\alpha\beta(x))\circ ad_{s,l}(y)\Big)(z)- \varepsilon(x,y)\Big( ad_{s,l}(\beta(y))\circ ad_{s,l}(\alpha(x))\Big)(z)\\
  &=& ad_{s,l}(\alpha\beta(x))(-[\alpha^{-1}\beta(z),\alpha^{s+2}\beta^{l-1}(y)])
  -\varepsilon(x,y)ad_{s,l}(\beta(y))(-[\alpha^{-1}\beta(z),\alpha^{s+2}\beta^{l-1}(x)])\\
  &=& \varepsilon(x,y+z)\varepsilon(y,z)[\alpha^{-1}\beta[\alpha^{-1}\beta(z),\alpha^{s+1}\beta^{l-1}(y)],\alpha^{s+1}\beta^{l-1}\alpha\beta(x)]\\
  && - \varepsilon(x+y,z)[\alpha^{-1}\beta[\alpha^{-1}\beta(z),\alpha^{s+2}\beta^{l-1}(x)],\alpha^{s+1}\beta^{l-1}\beta(y)]\\
   &=&-\varepsilon(x+y,z)\varepsilon(x,y)[\beta[\alpha^{-2}\beta(z),\alpha^{s}\beta^{l-1}(y)],\alpha^{s+2}\beta^{l}(x)]
   -\varepsilon(x+y,z)[\beta[\alpha^{-2}\beta(z),\alpha^{s+1}\beta^{l-1}(x)],\alpha^{s+1}\beta^{l}(y)]\\
   &=&\varepsilon(y,z) [\beta\alpha^{s+1}\beta^l(x),\alpha[\alpha^{-2}\beta(z),\alpha^{s}\beta^{l-1}(y)]]
   +\varepsilon(x,y+z)[\beta\alpha^{s}\beta^l(y),\alpha[\alpha^{-2}\beta(z),\alpha^{s+1}\beta^{l-1}(x)]]\\
   &=& -[\alpha^{s+1}\beta^{l+1}(x),[\alpha^{-1}\beta(z),\alpha^{s+1}\beta^{l-1}(y)]]
   +\varepsilon(x,y+z)[\alpha^{s}\beta^{l+1}(y),[\alpha^{-1}\beta(z),\alpha^{s+2}\beta^{l-1}(x)]]\\
   &=&[\alpha^{s+1}\beta^{l+1}(x),[\alpha^{s}\beta^l(y),z]]
   +\varepsilon(x,y+z)[\alpha^{s}\beta^{l+1}(y),[\alpha^{-1}\beta(z),\alpha^{s+2}\beta^{l-1}(x)]]\\
   &=& [\beta^2\alpha^{s+1}\beta^{l-1}(x),[\beta\alpha^{s}\beta^{l-1}(y),\alpha\alpha^{-1}(z)]]
   +\varepsilon(x,y+z)[\beta^2\alpha^{s}\beta^{l-1}(y),[\beta\alpha^{-1}(z),\alpha\alpha^{s+1}\beta^{l-1}(x)]]\\
   &=& -\varepsilon(x+y,z)[\beta^{2}\alpha^{-1}(z),[\beta\alpha^{s+1}\beta^{l-1}(x),\alpha\alpha^{s}\beta^{l-1}(y)]]\\
   &=& -\varepsilon(x+y,z)[\alpha^{-1}\beta^{2}(z),[\alpha^{s+1}\beta^{l}(x),\alpha^{s+1}\beta^{l-1}(y)]].
\end{eqnarray*}
\end{small}
Thus, the definition of $\alpha^{s}\beta^{l}$-adjoint representation is well defined. The proof is completed.
\end{proof}

The set of $n$-cochains on $\mathcal{A}$ with coefficients in $\mathcal{A}$, which we denote by $C^{n}_{\alpha,\beta}(\mathcal{A},\mathcal{A})$, is given by
$$C^{n}_{\alpha,\beta}(\mathcal{A},\mathcal{A})=\{f \in C^{n}(\mathcal{A},\mathcal{A}) ~:~f\circ \alpha^{\otimes n}=\alpha\circ f,~~f\circ \beta^{\otimes n}=\beta\circ f\}.$$
In particular, the set of $0$-cochains is given by
$$C^{0}_{\alpha,\beta}(\mathcal{A},\mathcal{A})=\{x \in \mathcal{H}(\mathcal{A})~:~\alpha(x)=x,~\beta(x)=x\}.$$

Now, we aim to study cochain complexes associated to  $\alpha^{s}\beta^l$-adjoint representations of a BiHom-Lie colour algebra $(\mathcal{A},[.,.],\varepsilon,\alpha,\beta)$.

\begin{prop}\label{deriv} Associated to the $\alpha^{s}\beta^l$-adjoint representation $ad_{s,l}$ of the BiHom-Lie colour algebra $(\mathcal{A},[.,.],\varepsilon,\alpha,\beta)$,
$D \in C^{1}_{\alpha,\beta}(\mathcal{A},\mathcal{A})$ is $1$-cocycle of degree $\gamma$ if and only if $D$ is an $\alpha^{s+2}\beta^{l-1}$-derivation of degree $\gamma$ (i.e. $D\in (Der_{\alpha^{s+2}\beta^{l-1}}(\mathcal{A}))_{\gamma})$.
\end{prop}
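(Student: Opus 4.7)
The plan is to unfold the cocycle condition $\delta^{1}(D) = 0$ explicitly for the $\alpha^{s}\beta^{l}$-adjoint representation $ad_{s,l}$, and then show by a change of variable and one application of the $\varepsilon$-BiHom skew-symmetry that this condition coincides with the identity defining an element of $(Der_{\alpha^{s+2}\beta^{l-1}}(\mathcal{A}))_{\gamma}$.

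First I would specialize formula~(\ref{cobord1}) to the representation $(\mathcal{A}, ad_{s,l}, \alpha, \beta)$ by substituting $[a, v]_{V} = ad_{s,l}(a)(v) = [\alpha^{s}\beta^{l}(a), v]$. Each bracket of the form $[\alpha\beta^{\,\cdot}(x), D(y)]_{V}$ becomes a bracket in $\mathcal{A}$ whose first entry is twisted by the composition of $\alpha^{s}\beta^{l}$ with the coboundary twist, and once the coboundary parameter is chosen so that the resulting twists match the target derivation class, the identity $\delta^{1}(D)(x, y) = 0$ reads
\begin{equation*}
D([\alpha^{-1}\beta(x), y]) \;=\; \varepsilon(\gamma, x)\,[\alpha^{s+1}\beta^{l}(x), D(y)] \;-\; \varepsilon(\gamma+x, y)\,[\alpha^{s+1}\beta^{l}(y), D(x)].
\end{equation*}

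Next I would introduce the change of variable $u := \alpha^{-1}\beta(x)$, equivalently $x = \alpha\beta^{-1}(u)$. Because $\alpha, \beta$ are even we have $|u| = |x|$; because $D \in C^{1}_{\alpha, \beta}(\mathcal{A}, \mathcal{A})$ commutes with both $\alpha$ and $\beta$, we have $D(x) = \alpha\beta^{-1} D(u)$. This transforms the left-hand side into $D([u, y])$, turns the first bracket on the right into $[\alpha^{s+2}\beta^{l-1}(u), D(y)]$, and rewrites the second bracket as $[\alpha^{s+1}\beta^{l}(y), \alpha\beta^{-1} D(u)]$.

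The key algebraic step is then to apply the $\varepsilon$-BiHom skew-symmetry $[\beta(a), \alpha(b)] = -\varepsilon(a, b)[\beta(b), \alpha(a)]$ to this last bracket, by factoring its arguments respectively as $\beta(\alpha^{s+1}\beta^{l-1}(y))$ and $\alpha(\beta^{-1} D(u))$. Simplifying the resulting $\varepsilon$-factors by means of $\varepsilon(a, b)\varepsilon(b, a) = 1$ collapses the cocycle condition into
\begin{equation*}
D([u, y]) \;=\; [D(u), \alpha^{s+2}\beta^{l-1}(y)] \;+\; \varepsilon(\gamma, u)\,[\alpha^{s+2}\beta^{l-1}(u), D(y)],
\end{equation*}
which is precisely the identity defining an element of $(Der_{\alpha^{s+2}\beta^{l-1}}(\mathcal{A}))_{\gamma}$. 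The converse direction is obtained by reading the same chain of equalities in reverse, starting from the derivation identity and arriving at $\delta^{1}(D) = 0$. The main obstacle is purely bookkeeping: one must factor the twists so that the $\varepsilon$-BiHom skew-symmetry applies cleanly, and then track the $\varepsilon$-signs carefully during both the change of variable and the simplification; once this is done, the algebraic content of the equivalence reduces to a single use of skew-symmetry.
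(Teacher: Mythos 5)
Your proposal is correct and follows essentially the same route as the paper: specialize the coboundary formula \eqref{cobord1} to the adjoint action $ad_{s,l}$, rewrite the term $[\alpha^{s+1}\beta^{l}(y),D(x)]$ via the $\varepsilon$-BiHom skew-symmetry, and recognize the $\alpha^{s+2}\beta^{l-1}$-derivation identity. If anything, your change of variable $u=\alpha^{-1}\beta(x)$ and explicit tracking of the twists and $\varepsilon$-factors make precise the bookkeeping that the paper's own two-line argument leaves implicit.
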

\begin{proof} The conclusion follows directly from the definition of the coboundary $\delta$. $D$ is closed if and only if
$$\delta (D)(x,y)=-D([\alpha^{-1}\beta(x),y])+\varepsilon(\gamma,x)[\alpha^{s+1}\beta^l(x),D(y)]-\varepsilon(\gamma+x,y)[\alpha^{s+1}\beta^l(y),D(x)]=0.$$
So $$D([\alpha^{-1}\beta(x),y])=[D(x),\alpha^{s+1}\beta^l(y)]+\varepsilon(\gamma,x)[\alpha^{s+1}\beta^l(x),D(y)]$$
which implies that $D$ is an $\alpha^{s+2}\beta^{l-1}$-derivation of $(\mathcal{A},[.,.],\varepsilon,\alpha,\beta)$ of degree $\gamma$.
\end{proof}

\begin{prop} Associated to the $\alpha^{s}\beta^l$-adjoint representation $ad_{s,l}$, we have
\begin{eqnarray*}
H^{0}(\mathcal{A},\mathcal{A})&=&\{x \in \mathcal{H}(\mathcal{A}) ~:~ \alpha(x)=x,~~\beta(x)=x,~~[x,y]=0\}.\\
H^{1}(\mathcal{A},\mathcal{A})&=& \frac{Der_{\alpha^{s+2}\beta^{l-1}}(\mathcal{A})}{Inn_{\alpha^{s+2}\beta^{l-1}}(\mathcal{A})}.
\end{eqnarray*}
\end{prop}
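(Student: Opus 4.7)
The plan is to treat the two equalities separately, in both cases simply unpacking the coboundary formula and using the defining constraints of $C^0_{\alpha,\beta}(\mathcal{A},\mathcal{A})$ together with the BiHom-$\varepsilon$-skewsymmetry. Since $C^{-1}_{\alpha,\beta}(\mathcal{A},\mathcal{A}) = 0$ by definition, one has $B^0 = 0$ and therefore $H^0 = Z^0 = \ker\delta^0$, while for $H^1$ one exploits the already-established Proposition \ref{deriv} and computes $B^1$ explicitly.

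For the description of $H^0$, I would take $x \in C^0_{\alpha,\beta}(\mathcal{A},\mathcal{A})$, which by definition means $\alpha(x) = x$ and $\beta(x) = x$, and specialize the coboundary formula \eqref{ECHLC} to $n = 0$. Only the single $s = 0$ term survives, so $\delta^0(x)(y)$ reduces to a bracket of the form $\varepsilon(\gamma,y)\,[\alpha^{s+1}\beta^{l+r-1}(y),x]$ in the $\alpha^s\beta^l$-adjoint representation. Using bijectivity of $\alpha$ and $\beta$, the vanishing of this for all $y$ is equivalent to $[z,x] = 0$ for every $z \in \mathcal{H}(\mathcal{A})$. Applying the BiHom-$\varepsilon$-skewsymmetry $[\beta(u),\alpha(v)] = -\varepsilon(u,v)[\beta(v),\alpha(u)]$ with the substitutions $u = \beta^{-1}(z)$, $v = \alpha^{-1}(x) = x$ (since $x$ is $\alpha$-fixed) and then using $\beta(x) = x$, this rewrites as $[x,\alpha\beta^{-1}(z)] = 0$; bijectivity of $\alpha\beta^{-1}$ then gives $[x,y] = 0$ for every $y$, which is exactly the stated description of $H^0$.

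For $H^1$, Proposition \ref{deriv} already identifies $Z^1(\mathcal{A},\mathcal{A}) = \mathrm{Der}_{\alpha^{s+2}\beta^{l-1}}(\mathcal{A})$, so the only remaining task is to identify $B^1 = \delta^0\bigl(C^0_{\alpha,\beta}(\mathcal{A},\mathcal{A})\bigr)$ with $\mathrm{Inn}_{\alpha^{s+2}\beta^{l-1}}(\mathcal{A})$. Given $a \in C^0_{\alpha,\beta}(\mathcal{A},\mathcal{A})$, the computation above gives $\delta^0(a)(y) = \varepsilon(\gamma,y)[\alpha^{s+1}\beta^{l+r-1}(y),a]$; applying BiHom-$\varepsilon$-skewsymmetry and using $\alpha(a) = \beta(a) = a$ to absorb the twists onto the $a$-slot, this rewrites (up to a scalar from $\varepsilon$) as $[\alpha^{s+2}\beta^{l-1}(a),y] = \mathrm{ad}_{s+2,l-1}(a)(y)$, which is exactly the generic inner $\alpha^{s+2}\beta^{l-1}$-derivation. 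Passing to the quotient yields $H^1 = \mathrm{Der}_{\alpha^{s+2}\beta^{l-1}}(\mathcal{A})/\mathrm{Inn}_{\alpha^{s+2}\beta^{l-1}}(\mathcal{A})$.

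The main obstacle is bookkeeping of the $\alpha,\beta$-exponents when applying BiHom-$\varepsilon$-skewsymmetry: one must carefully move powers of $\alpha$ and $\beta$ across the bracket, using bijectivity to invert them and the $\alpha,\beta$-invariance of the fixed element $x$ (respectively $a$) to simplify, so that the coboundary term emerges in the canonical inner-derivation form $[\alpha^{s+2}\beta^{l-1}(a),y]$. Once the exponents are matched, both identifications are essentially formal.
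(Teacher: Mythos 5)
Your argument is correct and follows essentially the same route as the paper: unpack $\delta^{0}$ on an $\alpha,\beta$-fixed element, use BiHom-$\varepsilon$-skewsymmetry together with bijectivity of $\alpha,\beta$ to reduce the $0$-cocycle condition to $[x,y]=0$ for all $y$, note $B^{0}=\{0\}$, and for $H^{1}$ invoke Proposition \ref{deriv} for $Z^{1}=Der_{\alpha^{s+2}\beta^{l-1}}(\mathcal{A})$ while identifying the image of $\delta^{0}$ with the inner derivations. The one loose point in your $B^{1}$ step --- after skewsymmetry the twist actually lands on the argument $y$ (giving $y\mapsto \varepsilon\cdot[a,\alpha^{s+2}\beta^{l+r-2}(y)]$), so the identification with $Inn_{\alpha^{s+2}\beta^{l-1}}(\mathcal{A})$ holds only up to nonzero scalars and precomposition with a bijection --- is glossed over in exactly the same way in the paper, which even writes $Inn_{\alpha^{s+1}\beta^{l-1}}(\mathcal{A})$ before quotienting by $Inn_{\alpha^{s+2}\beta^{l-1}}(\mathcal{A})$.
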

\begin{proof} For any $0$-BiHom cochain $x\in C^{0}_{\alpha,\beta}(\mathcal{A},\mathcal{A})$, we have $d_{s,l}(x)(y)=-[\alpha^{s+1}\beta^{l-1}(y),x]=[\alpha^{-1}\beta(x),\alpha^{s+2}\beta^{l-2}(y)]$.\\
Therefore, $x$ is a closed $0$-BiHom-cochain if and only if
$$[\alpha^{-1}\beta(x),\alpha^{s+2}\beta^{l-2}(y)]=0,$$
which is equivalent to
$$\alpha^{-s-2}\beta^{-l+2}\Big( [\alpha^{-1}\beta(x),\alpha^{s+2}\beta^{l-2}(y)]\Big)=[x,y]=0.$$
Therefore, the set of $0$-BiHom cocycle $Z^{0}(\mathcal{A},\mathcal{A})$ is given by
$$Z^{0}(\mathcal{A},\mathcal{A})=\{x \in C^{0}_{\alpha,\beta}(\mathcal{A},\mathcal{A}) ~:~[x,y]=0,~~\forall~~ y \in \mathcal{H}(\mathcal{A})\}.$$
As, $B^{0}(\mathcal{A},\mathcal{A})=\{0\}$, we deduce that $$ H^{0}(\mathcal{A},\mathcal{A})=\{x \in \mathcal{H}(\mathcal{A}) ~:~ \alpha(x)=x,~~\beta(x)=x,~~[x,y]=0,~~\forall~~y\in \mathcal{H}(\mathcal{A})\}.$$
By Proposition \ref{deriv}, we have $Z^1(\mathcal{A},\mathcal{A}) = Der_{\alpha^{s+2}\beta^{l-1}}(\mathcal{A})$. Furthermore, it is obvious that
any exact $1$-BiHom-cochain is of the form $-[\alpha^{s+1}\beta^{l-1}(\cdot),x]$ for some $x\in C^{0}_{\alpha,\beta}(\mathcal{A},\mathcal{A})$.
Therefore, we have $B^1(\mathcal{A},\mathcal{A}) = Inn_{\alpha^{s+1}\beta^{l-1}}(\mathcal{A})$, which implies that
\begin{eqnarray*}
H^{1}(\mathcal{A},\mathcal{A})&=& \frac{Der_{\alpha^{s+2}\beta^{l-1}}(\mathcal{A})}{Inn_{\alpha^{s+2}\beta^{l-1}}(\mathcal{A})}.
\end{eqnarray*}
\end{proof}
\subsection{The coadjoint representation $\widetilde{ad}$}
In this subsection, we explore the dual representations and coadjoint representations of BiHom-Lie colour algebras.
Let $(\mathcal{A},[.,.],\varepsilon,\alpha,\beta)$ be a BiHom-Lie colour algebra and $(V,\rho,\alpha_V,\beta_V)$ be a representation of $\mathcal{A}$. Let $V^{\ast}$ be the dual vector space of $V$. We define a linear map \\
$\widetilde{\rho}:\mathcal{A} \longrightarrow End(V^{\ast})$ by $\widetilde{\rho}(x)=-^{t}\rho(x)$.
Let $f \in  V^{\ast}, x , y \in \mathcal{H}(\mathcal{A})$ and $v \in V$. We compute the right hand side of the identity $(\ref{3.39})$.
\begin{align*}
&\widetilde{\rho}(\alpha\beta(x))\circ\widetilde{\rho}(y)-\varepsilon(x,y)\widetilde{\rho}(\beta(y))\circ\widetilde{\rho}(\alpha(x)))(f)(v) \\ &=\widetilde{\rho}(\alpha \beta(x))\Big(-\varepsilon(y,f)f\circ \rho(y)(v)\Big)-\varepsilon(x,y)\widetilde{\rho}(\beta(y))\Big(-\varepsilon(x,f) f\circ \rho(\alpha(x))(v)\Big)\\
&=-\varepsilon(x+y,f) f\Big(\rho(\alpha(x))\circ \rho(\beta(y))(v)-\varepsilon(x,y) \rho(y)\circ \rho(\alpha\beta(x))(v)\Big).
\end{align*}
On the other hand, we set that the twisted map for $\widetilde{\rho}$ is $\widetilde{\beta}=^{t}\beta$, the left hand side of $(\ref{3.39})$ writes
\begin{eqnarray*}
  \widetilde{\rho}([\beta(x),y])\circ \widetilde{\beta_M}(f)(v) &=& \widetilde{\rho}([\beta(x),y])(f\circ \beta)(v) \\
   &=& -\varepsilon(x+y,f)f\circ \beta (\rho([\beta(x),y])(v)).
\end{eqnarray*}
Therefore, we have the following Proposition:
\begin{prop} Let $(\mathcal{A},[.,.],\varepsilon,\alpha)$ be a BiHom-Lie colour algebra and $(M,\rho,\beta)$ be a representation of $\mathcal{A}$. Let $M^{\ast}$ be the dual vector space of $V$. The triple $(V^{\ast},\widetilde{\rho},\widetilde{\beta})$, where $\widetilde{\rho}:\mathcal{A} \longrightarrow End(V^{\ast})$ is given by $\widetilde{\rho}(x)=-^{t}\rho(x)$, defines a representation of BiHom-Lie colour algebra $(\mathcal{A},[.,.],\varepsilon,\alpha)$ if and only if
\begin{equation}\label{rep coad}
    \beta \circ \rho([\beta(x),y])= \rho(\alpha(x))\circ \rho(\beta(y))- \varepsilon(x,y)\rho(y)\circ \rho(\alpha\beta(x)).
\end{equation}
\end{prop}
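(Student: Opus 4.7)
The plan is to establish the equivalence by unpacking the three defining identities of a representation for $(V^{\ast},\widetilde{\rho},\widetilde{\beta})$ and observing that the first two are automatic while the third is precisely equivalent to the stated compatibility. The starting point is the definition of the graded transpose: for homogeneous $x\in\mathcal{H}(\mathcal{A})$, $f\in V^{\ast}$ and $v\in V$, $\widetilde{\rho}(x)(f)(v)=-\varepsilon(x,f)\,f(\rho(x)(v))$, and analogously $\widetilde{\alpha}(f)(v)=\varepsilon(\alpha,f)\,f(\alpha_V(v))$, $\widetilde{\beta}(f)(v)=\varepsilon(\beta,f)\,f(\beta_V(v))$ (with $\alpha,\beta$ even, so the signs are trivial).

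First I would verify the two covariance conditions $\widetilde{\rho}(\alpha(x))\circ\widetilde{\alpha}=\widetilde{\alpha}\circ\widetilde{\rho}(x)$ and $\widetilde{\rho}(\beta(x))\circ\widetilde{\beta}=\widetilde{\beta}\circ\widetilde{\rho}(x)$. Both are free: they follow by transposing the identities $\rho(\alpha(x))\circ\alpha_V=\alpha_V\circ\rho(x)$ and $\rho(\beta(x))\circ\beta_V=\beta_V\circ\rho(x)$ of the original representation, since taking transposes reverses composition and the extra $\varepsilon$-signs cancel because $\alpha$ and $\beta$ are of degree $0$. So no content is lost in passing between $\rho$ and $\widetilde{\rho}$ at the level of these two structural identities.

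The substantive part is the colour-twisted Jacobi-type identity \eqref{3.39} for $(V^{\ast},\widetilde{\rho},\widetilde{\beta})$. The two calculations preceding the statement already evaluate the left-hand side and the right-hand side of \eqref{3.39} applied to an arbitrary homogeneous $f\in V^{\ast}$ and tested against an arbitrary $v\in V$: the right-hand side becomes
\[
-\varepsilon(x+y,f)\,f\!\left(\rho(\alpha(x))\rho(\beta(y))(v)-\varepsilon(x,y)\rho(y)\rho(\alpha\beta(x))(v)\right),
\]
while the left-hand side equals
\[
-\varepsilon(x+y,f)\,f\!\left(\beta_V\circ\rho([\beta(x),y])(v)\right).
\]
Thus \eqref{3.39} for $\widetilde{\rho}$ amounts to
\[
f\!\left(\Bigl(\beta_V\circ\rho([\beta(x),y])-\rho(\alpha(x))\rho(\beta(y))+\varepsilon(x,y)\rho(y)\rho(\alpha\beta(x))\Bigr)(v)\right)=0
\]
for all homogeneous $x,y\in\mathcal{H}(\mathcal{A})$, all $v\in V$ and all $f\in V^{\ast}$.

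Finally I would close the equivalence in both directions. If the displayed identity $\beta\circ\rho([\beta(x),y])=\rho(\alpha(x))\rho(\beta(y))-\varepsilon(x,y)\rho(y)\rho(\alpha\beta(x))$ holds, the expression inside $f(\cdot)$ vanishes and \eqref{3.39} is satisfied, so $(V^{\ast},\widetilde{\rho},\widetilde{\beta})$ is a representation. Conversely, assuming \eqref{3.39} holds for $\widetilde{\rho}$, letting $f$ and $v$ range over all homogeneous elements of $V^{\ast}$ and $V$ respectively forces the bracketed map to be zero on $V$, i.e.\ the stated identity holds on $\mathcal{A}$. There is no real obstacle here; the only bookkeeping issue is checking that the colour-signs arising from $\widetilde{\rho}(x)$ acting on $f$ combine cleanly on both sides, which they do because each side of \eqref{3.39} produces the common global factor $-\varepsilon(x+y,f)$ shown above.
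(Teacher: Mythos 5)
Your proposal is correct and follows essentially the same route as the paper: the paper's own justification consists precisely of the two displayed computations before the statement, which evaluate $\widetilde{\rho}([\beta(x),y])\circ\widetilde{\beta}(f)(v)$ and $\bigl(\widetilde{\rho}(\alpha\beta(x))\widetilde{\rho}(y)-\varepsilon(x,y)\widetilde{\rho}(\beta(y))\widetilde{\rho}(\alpha(x))\bigr)(f)(v)$ and extract the common factor $-\varepsilon(x+y,f)$, after which the equivalence follows by nondegeneracy of the pairing exactly as you argue. Your explicit check of the two covariance identities and of the converse direction only makes explicit what the paper leaves implicit.
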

We obtain the following characterization in the case of adjoint representation.
\begin{cor} Let $(\mathcal{A},[.,.],\varepsilon,\alpha)$ be a BiHom-Lie colour algebra and $(\mathcal{A},ad,\alpha)$ be the adjoint representation of $\mathcal{A}$, where $ad: \mathcal{A} \longrightarrow End(\mathcal{A})$. We set $\widetilde{ad}:\mathcal{A} \longrightarrow End(\mathcal{A}^{\ast})$ and $\widetilde{ad}(x)(f)=-f \circ ad(x)$. Then $(\mathcal{A}^{\ast},\widetilde{ad},\widetilde{\alpha})$ is a representation of $\mathcal{A}$ if and only if
 $$\alpha \circ ad([x,y])= ad(x)\circ ad(\alpha(y))- \varepsilon(x,y)ad(y)\circ ad(\alpha(x)),~~\forall~~ x,y \in \mathcal{H}(\mathcal{A}).$$
\end{cor}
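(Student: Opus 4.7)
The plan is to derive this corollary as a direct specialization of the preceding proposition. The setup of the preceding proposition is general: given a representation $(V,\rho,\alpha_V,\beta_V)$ of a BiHom-Lie colour algebra, the dual triple $(V^{\ast},\widetilde{\rho},\widetilde{\beta})$ is a representation if and only if the identity \eqref{rep coad} holds. Here the corollary is stated in the Hom-Lie colour setting (one twist $\alpha$), so I would specialize by taking $V=\mathcal{A}$, $\rho=ad$, and $\beta=\alpha$, and then simply rewrite the resulting identity in the form asked for.

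Concretely, I would first note that the adjoint map $ad$ is a representation in the sense of the paper (this is essentially the content of the earlier lemma on $\alpha^{s}\beta^{l}$-adjoint representations, specialised to $s=l=0$ and $\beta=\alpha$). Substituting $\rho=ad$ and $\beta=\alpha$ into \eqref{rep coad} yields
\begin{eqnarray*}
\alpha\circ ad([\alpha(x),y]) &=& ad(\alpha(x))\circ ad(\alpha(y))-\varepsilon(x,y)\,ad(y)\circ ad(\alpha^{2}(x)).
\end{eqnarray*}
Then, since we are in the regular case (the hypothesis of this section is that $\alpha$ is bijective), replacing $x$ by $\alpha^{-1}(x)$ throughout converts the identity into
\begin{eqnarray*}
\alpha\circ ad([x,y]) &=& ad(x)\circ ad(\alpha(y))-\varepsilon(x,y)\,ad(y)\circ ad(\alpha(x)),
\end{eqnarray*}
which is exactly the condition stated in the corollary. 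Conversely, the same substitution (this time replacing $x$ by $\alpha(x)$) recovers \eqref{rep coad} from the corollary's identity, so the two are equivalent.

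There is no real obstacle here beyond bookkeeping: the only subtlety is verifying that the transported map $\widetilde{ad}$ and the transported twist $\widetilde{\alpha}=\,^t\alpha$ agree with the $(V^{\ast},\widetilde{\rho},\widetilde{\beta})$ construction of the proposition when one sets $V=\mathcal{A}$, and that the $\varepsilon$-factor from the grading behaves correctly under the substitution $x\mapsto \alpha^{\pm 1}(x)$ (which it does, because $\alpha$ is even and hence degree-preserving). Once these trivial checks are in place, the corollary follows immediately.
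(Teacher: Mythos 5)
Your proposal is correct and follows exactly the route the paper intends: the corollary is the specialization of the preceding proposition (Equation \eqref{rep coad}) to $V=\mathcal{A}$, $\rho=ad$, $\beta=\alpha$, with the substitution $x\mapsto\alpha^{\pm1}(x)$ reconciling the two forms of the identity. Your explicit use of regularity of $\alpha$ and the remark that $\varepsilon$ is unaffected because $\alpha$ is even are precisely the (implicit) bookkeeping the paper leaves to the reader.
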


\section{Generalized $\alpha^{k}\beta^{l}$-Derivations of BiHom-Lie colour algebras}
The purpose of this section is to study the homogeneous generalized $\alpha^{k}\beta^{l}$-derivations and homogeneous $\alpha^{k}\beta^{l}$-centroid of  BiHom-Lie colour algebras, as well as $\alpha^{k}\beta^{l}$-quasi-derivations and $\alpha^{k}\beta^{l}$-quasi-centroid. The homogeneous generalized derivations were discussed first  in \cite{LIn ni}. \\

 Let $(\mathcal{A},[.,.],\varepsilon,\alpha,\beta)$ be a BiHom-Lie colour algebra.
We set $Pl_{\gamma}(\mathcal{A})=\{D \in End(\mathcal{A}):~~D(\mathcal{A}_{\gamma})\subset \mathcal{A}_{\gamma+\tau}$ for all $\tau \in \Gamma \}$.

 It turns out that  $\Big(Pl(\mathcal{A})=\bigoplus_{\gamma \in \Gamma}Pl_{\gamma}(\mathcal{A}), [.,.],\alpha\Big)$ is a BiHom-Lie colour algebra with the  Lie colour bracket
$$[D_{\gamma},D_{\tau}]=D_{\gamma}\circ D_{\tau}-\varepsilon(\gamma,\tau)D_{\tau}\circ D_{\gamma}$$
for all $D_{\gamma},D_{\tau} \in \mathcal{H}(Pl(\mathcal{A}))$ and with $\alpha:\mathcal{A} \longrightarrow \mathcal{A}$ is an even homomorphism.\\
A homogeneous $\alpha^{k}\beta^{l}$-derivation of degree $\gamma$ of $\mathcal{A}$ is an endomorphism $D \in Pl_{\gamma}(\mathcal{A})$ such that
\begin{eqnarray*}
 & [D,\alpha]=0,~~[D,\beta]=0,\\
 & D([x,y])=[D(x),\alpha^{k}\beta^{l}(y)]+\varepsilon(\gamma,x)[\alpha^{k}\beta^{l}(x),D(y)],~~\forall x,y \in \mathcal{H}(\mathcal{A}).
\end{eqnarray*}
We denote the set of all homogeneous $\alpha^{k}\beta^{l}$-derivations of degree $\gamma$ of $\mathcal{A}$ by $Der^{\gamma}_{\alpha^{k}\beta^{l}}(\mathcal{A})$. We set
$$
Der_{\alpha^{k}\beta^{l}}(\mathcal{A})=
\bigoplus_{\gamma \in \Gamma}
Der _{ \alpha ^{k}\beta^{l}} ^ {\gamma }  (\mathcal{A} ),  \
Der(\mathcal{A})=\bigoplus_{k,l\geq 0}Der_{\alpha^{k}\beta^{l}}(\mathcal{A}).$$
The set $Der(\mathcal{A})$ provided with the colour-commutator is a Lie colour algebra. Indeed, the fact that $Der_{\alpha^{k}\beta^{l}}(\mathcal{A})$ is $\Gamma$-graded implies that $Der(\mathcal{A})$ is $\Gamma$-graded
$$(Der(\mathcal{A}))_{\gamma }=\bigoplus_{k,l \geq 0}(Der_{\alpha^{k}\beta^{l}}(\mathcal{A}))_{\gamma},~~\forall~~ \gamma \in \Gamma.$$

\begin{df}
\item
\begin{enumerate}
\item A linear mapping $D\in End(\mathcal{A})$ is said to be an $\alpha^{k}\beta^{l}$-generalized derivation of degree
$\gamma$ of $\mathcal{A}$ if there exist linear mappings $D', D'' \in End(\mathcal{A})$ of degree $\gamma$ such that
\begin{eqnarray*}
& [D,\alpha] = 0,~~[D^{'},\alpha]=0,~~[D^{''},\alpha]=0,[D,\beta]=0,~~[D^{'},\beta]=0,~~[D^{''},\beta]=0,\\
& D^{''}([x,y])= [D(x),\alpha^{k}\beta^{l}(y)]+\varepsilon(\gamma,x)[\alpha^{k}\beta^{l}(x),D^{'}(y)],~~\forall x,y \in \mathcal{H}(\mathcal{A})
\end{eqnarray*}
\item A linear mapping $D\in End(\mathcal{A})$ is said to be an $\alpha^{k}\beta^{l}$-quasi-derivation of degree
$\gamma$ of $\mathcal{A}$ if there exist linear mappings $D' \in End(\mathcal{A})$ of degree $\gamma$ such that
\begin{eqnarray*}
& [D,\alpha] = 0,~~[D^{'},\alpha]=0, [D,\beta]=0,~~[D^{'},\beta]=0,\\
& D^{'}([x,y])= [D(x),\alpha^{k}\beta^{l}(y)]+\varepsilon(\gamma,x)[\alpha^{k}\beta^{l}(x),D(y)],~~\forall~~ x,y \in \mathcal{H}(\mathcal{A}).
\end{eqnarray*}
\end{enumerate}
\end{df}
The sets of generalized derivations and quasi-derivations will be denoted by $GDer(\mathcal{A})$
and $QDer(\mathcal{A})$, respectively.
\begin{df}\item
\begin{enumerate}
\item The set $C(\mathcal{A})$ consisting of linear mappings $D$ with the property
\begin{eqnarray*}
&& [D,\alpha] = 0,\\
&& D([x,y])=[D(x),\alpha^{k}\beta^{l}(y)]=\varepsilon(\gamma,x)[\alpha^{k}\beta^{l}(x),D(y)],~~\forall~~ x,y \in \mathcal{H}(\mathcal{A}).
\end{eqnarray*}
is called the $\alpha^{k}\beta^l$-centroid of degree $\gamma$ of $\mathcal{A}$.
\item The set $QC(\mathcal{A})$ consisting of linear mappings $D$ with the property
\begin{eqnarray*}
&& [D,\alpha] = 0,\\
&& [D(x),\alpha^{k}\beta^{l}(y)]= \varepsilon(\gamma,x)[\alpha^{k}\beta^{l}(x),D(y)],~~\forall~~ x,y \in \mathcal{H}(\mathcal{A}).
\end{eqnarray*}
is called the $\alpha^{k}\beta^l$-quasi-centroid of degree $\gamma$ of $\mathcal{A}$.
\end{enumerate}
\end{df}

\begin{prop} Let $(\mathcal{A},[.,.],\varepsilon,\alpha,\beta)$ be a multiplicative BiHom-Lie colour algebra. Then
\begin{eqnarray*}
&& [Der(\mathcal{A}),C(\mathcal{A})]\subseteq C(\mathcal{A}).
\end{eqnarray*}
\end{prop}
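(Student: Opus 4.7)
The plan is to reduce to homogeneous generators. Fix an $\alpha^{k}\beta^{l}$-derivation $D\in\bigl(Der_{\alpha^{k}\beta^{l}}(\mathcal{A})\bigr)_{\gamma}$ and an element $D'\in\bigl(C_{\alpha^{s}\beta^{t}}(\mathcal{A})\bigr)_{\delta}$ of the centroid, and show that the colour commutator $[D,D']=D\circ D'-\varepsilon(\gamma,\delta)\,D'\circ D$ lies in $\bigl(C_{\alpha^{k+s}\beta^{l+t}}(\mathcal{A})\bigr)_{\gamma+\delta}$; by bilinearity and the bigrading of $Der(\mathcal{A})$ and $C(\mathcal{A})$, this suffices.

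The commutation conditions $[[D,D'],\alpha]=0$ and $[[D,D'],\beta]=0$ are automatic: $[D,\alpha]=[D',\alpha]=0$ forces $DD'\alpha=\alpha DD'$ and $D'D\alpha=\alpha D'D$, so the commutator also commutes with $\alpha$, and likewise with $\beta$.

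The core step is to compute $[D,D']([x,y])$ for homogeneous $x,y\in\mathcal{H}(\mathcal{A})$. First I would expand $D'([x,y])=[D'(x),\alpha^{s}\beta^{t}(y)]$ from the centroid property and then apply $D$ as an $\alpha^{k}\beta^{l}$-derivation of degree $\gamma$. Using $D\alpha^{s}\beta^{t}=\alpha^{s}\beta^{t}D$, this produces a principal term $[DD'(x),\alpha^{k+s}\beta^{l+t}(y)]$ plus a cross term carrying the factor $\varepsilon(\gamma,\delta+x)$. Dually, expanding $D([x,y])$ first by the derivation rule and then applying $D'$ (using the centroid identity on each of the two resulting brackets) produces $[D'D(x),\alpha^{k+s}\beta^{l+t}(y)]$ plus a cross term with factor $\varepsilon(\gamma,x)$. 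In the combination $D\circ D'-\varepsilon(\gamma,\delta)\,D'\circ D$ the bicharacter identity $\varepsilon(\gamma,\delta+x)=\varepsilon(\gamma,\delta)\,\varepsilon(\gamma,x)$ forces the two cross terms to cancel exactly, leaving $[[D,D'](x),\alpha^{k+s}\beta^{l+t}(y)]$, which is the first centroid identity.

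Running the analogous calculation starting from the equivalent form $D'([x,y])=\varepsilon(\delta,x)[\alpha^{s}\beta^{t}(x),D'(y)]$ yields the companion identity $\varepsilon(\gamma+\delta,x)[\alpha^{k+s}\beta^{l+t}(x),[D,D'](y)]$, so both defining relations of the centroid hold for $[D,D']$. The only real obstacle is bookkeeping: tracking the shifts of the colour degree under $D$ and $D'$ and invoking the bicharacter axioms at each junction so that the cross terms cancel; no identity beyond the commutativity of $\alpha,\beta,D,D'$ and the defining properties of $Der$ and $C$ is required.
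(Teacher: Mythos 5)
Your proposal is correct and follows essentially the same route as the paper: expand $[D,D']([x,y])$ using the $\alpha^{k}\beta^{l}$-derivation rule and the $\alpha^{s}\beta^{t}$-centroid identity, commute $D,D'$ past $\alpha,\beta$, and let the bicharacter relation $\varepsilon(\gamma,\delta+x)=\varepsilon(\gamma,\delta)\varepsilon(\gamma,x)$ cancel the cross terms. If anything, you carry the cancellation through to the conclusion and verify both defining identities of the centroid, steps the paper's write-up leaves partly implicit.
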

\begin{proof} Assume that $D_{\gamma} \in Der_{\alpha^{k}\beta^{l}}^\gamma(\mathcal{A}),~~D_{\eta} \in C_{\alpha^{s}\beta^{t}}^\eta(\mathcal{A})$. For arbitrary $x,y \in \mathcal{H}(\mathcal{A})$, we have
\begin{eqnarray*}
 [D_{\gamma}D_{\eta}(x),\alpha^{k+s}\beta^{l+t}(y)]&=& D_{\gamma}([D_{\eta}(x),\alpha^{s}\beta^{t}(y)])
-\varepsilon(\gamma,\eta+x)[\alpha^{k}\beta^{l}(D_{\eta}(x)),D_{\gamma}(\alpha^{s}\beta^{t}(y))]\\
 &=& D_{\gamma}([D_{\eta}(x),\alpha^{s}\beta^{t}(y)])
-\varepsilon(\gamma,\eta+x)[D_{\eta}(\alpha^{k}\beta^{l}(x)),D_{\gamma}(\alpha^{s}\beta^{t}(y))]\\
&=& D_{\gamma}D_{\eta}([x,y])-\varepsilon(\gamma,\eta+x)\varepsilon(\eta,x)[\alpha^{k+s}\beta^{l+t}(x),D_{\eta}D_{\gamma}(y)].
\end{eqnarray*}
and
\begin{eqnarray*}
 [D_{\eta}D_{\gamma}(x),\alpha^{k+s}\beta^{l+t}(y)]&=& D_{\eta}([D_{\gamma}(x),\alpha^{k}\beta^{l}(y)])\\
 &=& D_{\eta}D_{\gamma}([x,y])-\varepsilon(\gamma,x)[D_{\eta}(\alpha^{k}\beta^{l}(x)),D_{\gamma}(y)]\\
&=&  D_{\eta}D_{\gamma}([x,y])-\varepsilon(\gamma,x)D_{\eta}([\alpha^{k}\beta^{l}(x),D_{\gamma}(y)]).
\end{eqnarray*}

Now, let $\Delta_{\gamma^{'}} \in C_{\alpha^s \beta^{t}}^{s+t}(\mathcal{A})$ then we have$:$
\begin{align*}
\Delta_{\gamma^{'}}D^{''}_{\gamma}([x,y])
&=\Delta_{\gamma^{'}}([D_{\gamma}(x),\alpha^{k}\beta^{l}(y)]+\varepsilon(\gamma,x)[\alpha^{k}\beta^{l}(x),D^{'}_{\gamma}(y)])\\
&=[\Delta_{\gamma^{'}}D_{\gamma}(x),\alpha^{k+s}\beta^{l+t}(y)]+\varepsilon(\gamma+\gamma^{'},x)[\alpha^{k+s}\beta^{l+t}(x),\Delta_{\gamma^{'}}D^{'}_{\gamma}(y)].
\end{align*}
Then $\Delta_{\gamma^{'}} D_{\gamma} \in GDer_{\alpha^{k+s}\beta^{l+t}}(\mathcal{A})$ and is of degree $(\gamma+\gamma^{'})$.
\end{proof}
\begin{prop} $C(\mathcal{A})\subseteq QDer(\mathcal{A})$.
\end{prop}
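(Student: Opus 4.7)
The plan is to show that every centroid element is automatically a quasi\mbox{-}derivation by taking the ``witness'' map $D'$ to be a scalar multiple of $D$ itself. More precisely, given any homogeneous $D\in C(\mathcal A)$ of degree $\gamma$, I would set $D':=2D$ and verify that the pair $(D,D')$ fulfills the defining conditions of $QDer(\mathcal A)$.

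First I would unpack the two equalities packaged inside the centroid definition:
\[
D([x,y])=[D(x),\alpha^{k}\beta^{l}(y)]
\qquad\text{and}\qquad
D([x,y])=\varepsilon(\gamma,x)[\alpha^{k}\beta^{l}(x),D(y)],
\]
for all $x,y\in\mathcal H(\mathcal A)$. Adding these two identities gives
\[
2D([x,y])=[D(x),\alpha^{k}\beta^{l}(y)]+\varepsilon(\gamma,x)[\alpha^{k}\beta^{l}(x),D(y)],
\]
which is exactly the $\alpha^{k}\beta^{l}$-quasi\mbox{-}derivation identity with $D'=2D$. Since $\mathrm{char}\,\mathbb K=0$, the factor $2$ is harmless.

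It remains to check the compatibility of $D$ and $D'$ with the structure maps. By definition of $C(\mathcal A)$ we have $[D,\alpha]=0$ (and the analogous commutation with $\beta$, which the paper evidently intends as part of the centroid axioms, either directly or as a consequence of $D\in Pl_\gamma(\mathcal A)$ combined with multiplicativity). Then $[D',\alpha]=[2D,\alpha]=2[D,\alpha]=0$, and similarly $[D',\beta]=0$. Since $D'$ has the same degree $\gamma$ as $D$, all the homogeneity requirements are satisfied. Hence $D\in QDer(\mathcal A)$, proving $C(\mathcal A)\subseteq QDer(\mathcal A)$.

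No real obstacle is expected here: the content of the statement is essentially the observation that the centroid identity splits as the sum of two ``half'' Leibniz rules, so that $2D$ plays the role of the auxiliary map. The only mildly delicate point is keeping track of the $\varepsilon$-sign and the degree $\gamma$ of $D$, which enters in exactly the same way on both sides of the quasi\mbox{-}derivation identity.
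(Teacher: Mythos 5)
Your proposal is correct and matches the paper's own argument: the paper likewise adds the two equalities in the centroid condition to get $[D(x),\alpha^{k}\beta^{l}(y)]+\varepsilon(\gamma,x)[\alpha^{k}\beta^{l}(x),D(y)]=2D([x,y])$ and takes the auxiliary map to be $2D$ (written $D''_{\gamma}$ there). Your added remark on the commutation of $2D$ with $\alpha$ and $\beta$ is a harmless extra check that the paper leaves implicit.
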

\begin{proof}Let $D_{\gamma} \in C_{\alpha^{k}\beta^{l}}(\mathcal{A})$ and $x,y \in \mathcal{H}(\mathcal{A})$, then we have
\begin{align*}
[D_{\gamma}(x),\alpha^{k}\beta^{l}(y)]+ \varepsilon(\gamma,x)[\alpha^{k}\beta^{l}(x),D_{\gamma}(y)]
&=[D_{\gamma}(x),\alpha^{k}\beta^{l}(y)]+[D_{\gamma}(x),\alpha^{k}\beta^{l}(y)]\\
&=2 D_{\gamma}([x,y])\\
&=D^{"}_{\gamma}([x,y]).
\end{align*}
Then $D_{\gamma}\in QDer_{\alpha^{k}\beta^{l}}^{\gamma}(\mathcal{A})$.
\end{proof}

\begin{prop}$[QC(\mathcal{A}),QC(\mathcal{A})]\subseteq QDer(\mathcal{A})$.
\end{prop}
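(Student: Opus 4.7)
The plan is to take two homogeneous quasi-centroid elements, show their BiHom-commutator satisfies the quasi-derivation identity with the auxiliary map $D' = 0$, and hence conclude. More precisely, pick $D \in QC^{\gamma_1}_{\alpha^{k_1}\beta^{l_1}}(\mathcal{A})$ and $E \in QC^{\gamma_2}_{\alpha^{k_2}\beta^{l_2}}(\mathcal{A})$ and form
$$[D,E] = D\circ E - \varepsilon(\gamma_1,\gamma_2)\, E\circ D.$$
Then $[D,E]$ is homogeneous of degree $\gamma_1+\gamma_2$ and commutes with $\alpha,\beta$ because each of $D,E$ does. So it suffices to verify the quasi-derivation identity with twist $\alpha^{k_1+k_2}\beta^{l_1+l_2}$ and with $D'\equiv 0$; that is, I aim to show
$$[[D,E](x),\alpha^{k_1+k_2}\beta^{l_1+l_2}(y)] + \varepsilon(\gamma_1+\gamma_2,x)[\alpha^{k_1+k_2}\beta^{l_1+l_2}(x),[D,E](y)] = 0.$$

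The key computation is to evaluate $[DE(x),\alpha^{k_1+k_2}\beta^{l_1+l_2}(y)]$ by applying the quasi-centroid property twice in succession. First, since $E(x)$ has degree $\gamma_2+x$, the quasi-centroid condition for $D$ yields
$$[DE(x),\alpha^{k_1}\beta^{l_1}(\alpha^{k_2}\beta^{l_2}(y))] = \varepsilon(\gamma_1,\gamma_2+x)[\alpha^{k_1}\beta^{l_1}(E(x)),D\alpha^{k_2}\beta^{l_2}(y)].$$
Using that $E$ commutes with $\alpha,\beta$, rewrite $\alpha^{k_1}\beta^{l_1}(E(x)) = E(\alpha^{k_1}\beta^{l_1}(x))$ and similarly for $D$; then apply the quasi-centroid identity for $E$ to the resulting bracket (with $\alpha^{k_1}\beta^{l_1}(x)$ still of degree $x$). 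This produces
$$[DE(x),\alpha^{k_1+k_2}\beta^{l_1+l_2}(y)] = \varepsilon(\gamma_1,\gamma_2)\,\varepsilon(\gamma_1+\gamma_2,x)[\alpha^{k_1+k_2}\beta^{l_1+l_2}(x),ED(y)].$$
A completely symmetric computation (interchanging the roles of $D$ and $E$, and of $\gamma_1$ and $\gamma_2$) gives
$$[ED(x),\alpha^{k_1+k_2}\beta^{l_1+l_2}(y)] = \varepsilon(\gamma_2,\gamma_1)\,\varepsilon(\gamma_1+\gamma_2,x)[\alpha^{k_1+k_2}\beta^{l_1+l_2}(x),DE(y)].$$

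Combining the two, using $\varepsilon(\gamma_1,\gamma_2)\varepsilon(\gamma_2,\gamma_1)=1$, the expression for $[[D,E](x),\alpha^{k_1+k_2}\beta^{l_1+l_2}(y)]$ collapses to
$$-\varepsilon(\gamma_1+\gamma_2,x)\bigl[\alpha^{k_1+k_2}\beta^{l_1+l_2}(x),\,DE(y)-\varepsilon(\gamma_1,\gamma_2)ED(y)\bigr] = -\varepsilon(\gamma_1+\gamma_2,x)[\alpha^{k_1+k_2}\beta^{l_1+l_2}(x),[D,E](y)],$$
which is exactly the required identity with $D'=0$. Hence $[D,E] \in QDer^{\gamma_1+\gamma_2}_{\alpha^{k_1+k_2}\beta^{l_1+l_2}}(\mathcal{A})$, and since $QC(\mathcal{A})$ and $QDer(\mathcal{A})$ are the $\Gamma$-graded direct sums of their homogeneous components, the inclusion $[QC(\mathcal{A}),QC(\mathcal{A})]\subseteq QDer(\mathcal{A})$ follows by linearity.

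The only subtle point is sign bookkeeping: one must track the bicharacter factors carefully through both applications of the quasi-centroid identity to ensure the factor $\varepsilon(\gamma_1,\gamma_2)\varepsilon(\gamma_2,\gamma_1)$ appears and simplifies to $1$, allowing the two terms of $[D,E]$ to recombine. There are no conceptual difficulties beyond this routine bicharacter arithmetic.
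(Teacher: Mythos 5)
Your proposal is correct and follows essentially the same route as the paper: apply the quasi-centroid identity twice to each composite $DE$ and $ED$, combine, and conclude that the commutator satisfies the quasi-derivation identity with $D'=0$ (degree $\gamma_1+\gamma_2$, twist $\alpha^{k_1+k_2}\beta^{l_1+l_2}$). In fact your bookkeeping is more careful than the paper's, which in its intermediate step treats $D_\gamma\circ D_\tau$ as if it were itself quasi-centroidal (omitting the order switch to $ED$ and the factor $\varepsilon(\gamma_1,\gamma_2)$) before reaching the same final identity; like the paper, you implicitly use that quasi-centroid elements commute with $\beta$ as well as $\alpha$, consistent with the paper's conventions elsewhere.
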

\begin{proof} Assume that $D_{\gamma} \in \mathcal{H}(QC_{\alpha^{k}\beta^{l}}(\mathcal{A}))$ and $D_{\tau}\in \mathcal{H}(QC_{\alpha^{s}\beta^{t}}(\mathcal{A})) $. Then for all $x,y \in \mathcal{H}(\mathcal{A})$, we have
 $$[D_{\gamma}(x),\alpha^{k}\beta^{l}(y)]=\varepsilon(\gamma,x)[\alpha^{k}\beta^{l}(x),D_{\gamma}(y)]$$
and  $$[D_{\tau}(x),\alpha^{s}\beta^{t}(y)]=\varepsilon(\tau,x)[\alpha^{s}\beta^{t}(x),D_{\tau}(y)].$$
Hence,
on the other hand, we have
\begin{eqnarray*}
[[D_{\gamma},D_{\tau}](x),\alpha^{k+s}\beta^{l+t}(y)]&=&[(D_{\gamma}\circ D_{\tau}-\varepsilon(\gamma,\tau)D_{\tau}\circ D_{\gamma})(x),\alpha^{k+s}\beta^{l+t}(y)]\\
&=&[D_{\gamma}\circ D_{\tau}(x),\alpha^{k+s}\beta^{l+t}(y)]-\varepsilon(\gamma,\tau)[D_{\tau}\circ D_{\gamma}(x),\alpha^{k+s}\beta^{l+t}(y)]\\
&=&\varepsilon(\gamma+\tau,x)[\alpha^{k+s}\beta^{l+t}(x),D_{\gamma}\circ D_{\tau}(y)]\\
&&-\varepsilon(\gamma,\tau)\varepsilon(\gamma+\tau,x)[\alpha^{k+s}\beta^{l+t}(x),D_{\tau}\circ D_{\gamma}(y)]\\
&=&\varepsilon(\gamma+\tau,x)[\alpha^{k+s}\beta^{l+t}(x),[D_{\gamma},D_{\tau}](y)]+[[D_{\gamma},D_{\tau}](x),\alpha^{k+s}\beta^{l+t}(y)],
\end{eqnarray*}
which implies that $[[D_{\gamma},D_{\tau}](x),\alpha^{k+s}\beta^{l+t}(y)]+[[D_{\gamma},D_{\tau}](x),\alpha^{k+s}\beta^{l+t}(y)]=0.$\\
Then $[D_{\gamma},D_{\tau}] \in GDer_{\alpha^{k+s}\beta^{l+t}}(\mathcal{A})$ and is of degree $(\gamma+\tau).$
\end{proof}

\subsection{BiHom-Jordan colour algebras and Derivations}
We show in the following that $\alpha^{-1}\beta^2$-derivations of BiHom-Lie colour algebras give rise to BiHom-Jordan colour algebras. First we introduce a definition of colour BiHom-Jordan algebra.
\begin{df}

A colour BiHom-algebra $(\mathcal{A},\mu,\varepsilon,\alpha,\beta)$ is a BiHom-Jordan colour algebra if hold  the identities
    \begin{eqnarray}
& \alpha\circ \beta=\beta\circ \alpha,\\
& \mu(\beta(x),\alpha(y))= \varepsilon(x,y)\mu(\beta(y),\alpha(x)),\\
& \circlearrowleft_{x,y,w}\varepsilon(w,x+z) as_{\alpha,\beta}\Big(\mu(\beta^2(x),\alpha\beta(y)),\alpha^2\beta(z),\alpha^3(w)\Big)=0.\label{BiHomJordanId}\end{eqnarray}
 for all $x,y,z$ and $w$ in $\mathcal{H}(\mathcal{A}) $ and where $as_{\alpha,\beta}$ is the BiHom-associator defined in \eqref{BiHomAssociator}.\\
 The identity \eqref{BiHomJordanId} is called BiHom-Jordan colour  identity.

\end{df}
Observe that when $\beta=\alpha$, the BiHom-Jordan colour identity reduces to the Hom-Jordan colour identity.
\begin{prop}\label{DERhomJORDAN} Let $(\mathcal{A},[.,.],\varepsilon,\alpha,\beta)$ be a multiplicative BiHom-Lie colour algebra. Consider the operation
$D_{1}\bullet D_{2}=D_{1}\circ D_{2}+\varepsilon(d_1,d_2)D_{2}\circ D_{1}$ for all $\alpha^{-1} \beta^2$-derivations $D_{1}, D_{2} \in \mathcal{H}(Pl(\mathcal{A}))$. Then the $5$-tuple $(Pl(\mathcal{A}),\bullet,\varepsilon,\alpha,\beta)$ is a BiHom-Jordan colour algebra.
\end{prop}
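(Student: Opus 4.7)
\textit{Plan.} The plan is to verify the three defining axioms of a BiHom-Jordan colour algebra on $(Pl(\mathcal{A}),\bullet,\varepsilon,\alpha,\beta)$, where $\alpha$ and $\beta$ act on $Pl(\mathcal{A})$ by composition. This action is well-behaved on $\alpha^{-1}\beta^{2}$-derivations because every such $D$ satisfies $[D,\alpha]=[D,\beta]=0$, so $\alpha$ and $\beta$ can always be pushed past compositions of derivations. In particular, for any two homogeneous $\alpha^{-1}\beta^{2}$-derivations $D_1,D_2$ one has $\beta(D_1)\bullet\alpha(D_2)=\alpha\beta(D_1\bullet D_2)$ and similarly for any composition.

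The first axiom $\alpha\circ\beta=\beta\circ\alpha$ is inherited directly from $\mathcal{A}$. For the $\varepsilon$-symmetry axiom, I would first observe the colour-commutativity $D_1\bullet D_2=\varepsilon(d_1,d_2)\,D_2\bullet D_1$, which follows directly from the definition of $\bullet$ together with the bicharacter identity $\varepsilon(d_1,d_2)\varepsilon(d_2,d_1)=1$. Combined with the commutation of $\alpha,\beta$ with derivations, this gives
\[
\beta(D_1)\bullet\alpha(D_2)=\alpha\beta(D_1\bullet D_2)=\varepsilon(d_1,d_2)\,\alpha\beta(D_2\bullet D_1)=\varepsilon(d_1,d_2)\,\beta(D_2)\bullet\alpha(D_1),
\]
as required.

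The bulk of the work, and the main obstacle, is the BiHom-Jordan colour identity
\[
\circlearrowleft_{x,y,w}\varepsilon(w,x+z)\,as_{\alpha,\beta}\!\bigl((\beta^{2}x)\bullet(\alpha\beta y),\alpha^{2}\beta z,\alpha^{3}w\bigr)=0
\]
applied to $x=D_1,y=D_2,z=D_3,w=D_4$. The strategy is, first, to expand each BiHom-associator via $as_{\alpha,\beta}(A,B,C)=\alpha(A)\bullet(B\bullet C)-(A\bullet B)\bullet\beta(C)$; second, to exploit the commutation of $\alpha,\beta$ with every $D_i$ so that in each summand all twists can be collected into a common outer factor $\alpha^{p}\beta^{q}$; third, to expand each $\bullet$-product into two ordinary compositions inside the colour-associative algebra $(Pl(\mathcal{A}),\circ)$. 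What remains is a polynomial identity in length-four words in the $D_i$, which is the classical statement that the $\varepsilon$-symmetrised product in any colour-associative algebra satisfies the Jordan identity; this is verified using only the associativity of $\circ$ and the bicharacter identities $\varepsilon(a,b+c)=\varepsilon(a,b)\varepsilon(a,c)$ and $\varepsilon(a,b)\varepsilon(b,a)=1$.

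The genuine difficulty is therefore combinatorial rather than conceptual: one must track the $\alpha$- and $\beta$-powers attached to each term and reconcile the $\varepsilon$-signs produced both by the cyclic permutation on $(D_1,D_2,D_4)$ and by the colour-commutativity applied when collecting twists. Provided one is systematic in pushing all $\alpha,\beta$ to the outside and then invoking the associative-algebra Jordan identity for the remaining core expression, the three cyclic terms pair off and cancel, completing the verification.
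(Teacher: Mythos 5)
Your verification of the first two axioms is fine: under the composition action every $\alpha^{-1}\beta^{2}$-derivation commutes with $\alpha$ and $\beta$, so indeed $\beta(D_1)\bullet\alpha(D_2)=\alpha\beta(D_1\bullet D_2)$ and the $\varepsilon$-symmetry follows from the colour-commutativity of $\bullet$. The gap is in the BiHom-Jordan identity, precisely at the step where you claim that after pushing all twists outside ``what remains is the classical statement that the $\varepsilon$-symmetrised product in a colour-associative algebra satisfies the Jordan identity''. The twists do not collect into a common outer factor for the two halves of the BiHom-associator. Writing $A=\beta^{2}(D_x)\bullet\alpha\beta(D_y)=\alpha\beta^{3}(D_x\bullet D_y)$, $B=\alpha^{2}\beta(D_z)$, $C=\alpha^{3}(D_w)$, your own collection procedure gives
\[
\alpha(A)\bullet(B\bullet C)=\alpha^{7}\beta^{4}\bigl[(D_x\bullet D_y)\bullet(D_z\bullet D_w)\bigr],
\qquad
(A\bullet B)\bullet\beta(C)=\alpha^{6}\beta^{5}\bigl[\bigl((D_x\bullet D_y)\bullet D_z\bigr)\bullet D_w\bigr],
\]
so the cyclic sum you must kill has the form $\alpha^{7}\beta^{4}P-\alpha^{6}\beta^{5}Q$, where $P$ and $Q$ are the two cyclic sums appearing in the untwisted colour-Jordan identity. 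The classical fact you invoke yields only $P=Q$, not $P=Q=0$ (already for $\varepsilon\equiv 1$ and all $D_i$ equal to a single $D$ one finds $P=Q=24D^{4}$), so the mismatch between $\alpha^{7}\beta^{4}$ and $\alpha^{6}\beta^{5}$ cannot be absorbed, and the three cyclic terms do not pair off as you assert. Your reduction does prove the Hom special case $\beta=\alpha$, where both halves carry $\alpha^{11}$, but the genuinely BiHom situation is exactly where this bookkeeping is the whole difficulty.

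Note also that this is not the route taken in the paper: its proof never collects the twists into outer factors, but expands all eight length-four composition words of each associator with $\alpha,\beta$ left attached to the individual $D_i$ and cancels the terms of the cyclic sum directly at that level. To salvage your argument you would have to either specify an action of $\alpha,\beta$ on $Pl(\mathcal{A})$ that is genuinely invisible on operators commuting with them (for instance conjugation $D\mapsto\alpha\circ D\circ\alpha^{-1}$, under which the identity really does reduce to the classical colour-Jordan property of the $\varepsilon$-anticommutator), or supply an argument that reconciles the unequal twist monomials; as written, the key reduction step is not available.
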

\begin{proof} Assume that $D_{1},D_{2}, D_{3},D_{4} \in \mathcal{H}(Pl(\mathcal{A}))$, we have
\begin{eqnarray*}
 \beta(D_{1})\bullet \alpha(D_{2})&=& \beta(D_{1})\circ \alpha(D_{2})+\varepsilon(d_1,d_2)\beta(D_{2})\circ \alpha(D_{1})  \\
   &=&\varepsilon(d_1,d_2)(\beta(D_{2})\circ \alpha(D_{1})+\varepsilon(d_2,d_1))\beta(D_{1})\circ \alpha(D_{2})  \\
   &=&\varepsilon(d_1,d_2)\beta(D_{2})\bullet \alpha(D_{1}).
\end{eqnarray*}
 Since
\begin{align*}
&\Big((\beta^2(D_{1})\bullet \alpha\beta(D{2}))\bullet \alpha^2\beta( D_{3})\Big)\bullet \beta\alpha^{3}(D_{4})\\
&=\Big((\beta^2(D_1) \alpha\beta(D_2))\alpha^2\beta(D_3)\Big) \beta\alpha^{3}(D_4)+\varepsilon(d_1+d_2+d_3,d_4)\beta\alpha^{3}(D_4)\Big(\beta^2(D_1)\alpha\beta (D_2))\alpha^2\beta(D_3)\Big)\\
&+\varepsilon(d_1+d_2,d_3)\Big(\alpha^2\beta(D_3)(\beta^2(D_1)\alpha\beta(D_2))\Big) \beta\alpha^{3}(D_4)\\
&+\varepsilon(d_1+d_2,d_3)\varepsilon(d_1+d_2+d_3,d_4)\beta\alpha^{3}(D_4)\Big(\alpha^2\beta(D_3)(\beta^2(D_1) \alpha\beta(D_2))\Big)\\&+\varepsilon(d_1,d_2)(\alpha\beta(D_2)\beta^2(D_1))\alpha^2\beta(D_3))\beta\alpha^{3}(D_4)\\
&+\varepsilon(d_1,d_2)\varepsilon(d_1+d_2+d_3,d_4)\beta\alpha^{3}(D_4)\Big( (\alpha\beta(D_2)\beta^2(D_1))\alpha^2\beta(D_3)\Big)\\
&+\varepsilon(d_1,d_2)\varepsilon(d_1+d_2,d_3)\Big( \alpha^2\beta(D_3)(\alpha\beta(D_3)\beta^2(D_1))\Big) \beta\alpha^{3}(D_4)\\
&+\varepsilon(d_1,d_2)\varepsilon(d_1+d_2,d_3)\varepsilon(d_1+d_2+d_3,d_4)\beta\alpha^{3}(D_4)\Big(\alpha^2\beta(D_3)(\alpha\beta(D_2)\beta^2(D_1))\Big)
\end{align*}
and
\begin{align*}
&\alpha(\beta^2(D_{1})\bullet \alpha\beta(D_{2})) \bullet (\beta\alpha^2(D_{3})\bullet\alpha^3(D_{4}))\\
&=(\alpha\beta^2(D_1) \alpha^2\beta(D_2)) (\alpha^2\beta(D_3)\alpha^3(D_4))+\varepsilon(d_1+d_2,d_3+d_4)(\alpha^2\beta(D_3)\alpha^3(D_4))(\alpha\beta^2(D_1)\alpha^2\beta (D_2))\\
&+\varepsilon(d_3,d_4)(\alpha\beta^2(D_1) \alpha^2\beta(D_2))(\alpha^3(D_4)\alpha^2\beta(D_3))\\
&+\varepsilon(d_3,d_4)\varepsilon(d_1+d_2,d_3+d_4)(\alpha^3(D_4)\alpha^2\beta(D_{3}))(\alpha\beta^2(D_1)\alpha^2\beta(D_2))\\
&+\varepsilon(d_1,d_2)(\alpha^2\beta(D_2) \alpha\beta^2(D_1))(\alpha^2\beta(D_3)\alpha\beta^2(D_4))\\
&+\varepsilon(d_1,d_2)\varepsilon(d_1+d_2,d_3+d_4)(\alpha^2\beta(D_3)\alpha^3(D_4))(\alpha^2\beta(D_2)\alpha\beta^2 D_1))\\
&+\varepsilon(d_1,d_2)\varepsilon(d_3,d_4)(\alpha^2\beta(D_2)\alpha\beta^2(D_1))(\alpha^3(D_4)\alpha^2\beta(D_3))\\
&+\varepsilon(d_1,d_2)\varepsilon(d_3,d_4)\varepsilon(d_1+d_2,d_3+d_4)(\alpha^3(D_4)\alpha^2\beta(D_3))(\alpha^2\beta(D_2)\alpha\beta^2(D_1)).
\end{align*}
Then we have
\begin{align*}
&\varepsilon(d_4,d_1+d_3) as_{\alpha,\beta}\Big(\beta^2(D_{1})\bullet \alpha\beta(D_2),\alpha\beta(D_3),\alpha^3(D_4)\Big)\\
&=\varepsilon(d_2,d_3)\varepsilon(d_1+d_2+d_4,d_3)(\alpha^2\beta(D_3)\alpha^3(D_4))(\alpha\beta^2(D_1)\alpha^2\beta (D_2))\\
&+\varepsilon(d_4,d_1)(\alpha\beta^2(D_1)\alpha^2\beta(D_2))(\alpha^3(D_4)\alpha^2\beta(D_3))\\
&+\varepsilon(d_2,d_4)\varepsilon(d_1,d_2)\varepsilon(d_1+d_2+d_4,d_3)(\alpha^2\beta(D_3)\alpha^3(D_4))(\alpha^2\beta(D_2)\alpha\beta^2 D_1))\\
&+\varepsilon(d_4,d_1)\varepsilon(d_1,d_2)(\alpha^2\beta(D_2)\alpha\beta^2(D_1))(\alpha^3(D_4)\alpha^2\beta(D_3))\\
&-\varepsilon(d_2,d_4)\beta\alpha^{3}(D_4)\Big(\beta^2(D_1)\alpha\beta (D_2))\alpha^2\beta(D_3)\Big)\\
&-\varepsilon(d_4,d_1+d_3)\varepsilon(d_1+d_2,d_3)\Big(\alpha^2\beta(D_3)(\beta^2(D_1)\alpha\beta(D_2))\Big) \beta\alpha^{3}(D_4)\\
&-\varepsilon(d_1,d_2)\varepsilon(d_2,d_4)\beta\alpha^{3}(D_4)\Big( (\alpha\beta(D_2)\beta^2(D_1))\alpha^2\beta(D_3)\Big)\\
&-\varepsilon(d_4,d_1+d_3)\varepsilon(d_1,d_2)\varepsilon(d_1+d_2,d_3)\Big( \alpha^2\beta(D_3)(\alpha\beta(D_3)\beta^2(D_1))\Big) \beta\alpha^{3}(D_4).
\end{align*}
Therefore, we get
\begin{align*}
&\circlearrowleft_{D_1,D_3,D_4}\varepsilon(d_4,d_1+d_3)as_{\alpha,\beta}\Big(\beta^2(D_{1})\bullet \alpha\beta(D_2),\alpha\beta(D_3),\alpha^3(D_4)\Big)=0,
\end{align*}
and so the statement holds.
\end{proof}
\begin{cor} Let $(\mathcal{A},[.,.],\varepsilon,\alpha,\beta)$ be a multiplicative BiHom-Lie colour algebra. Consider  the operation
$$D_{1}\bullet D_{2}=D_{1}\circ D_{2}-\varepsilon(d_1,d_2)D_{2}\circ D_{1}$$ for all $D_{1}, D_{2} \in \mathcal{H}(QC(\mathcal{A}))$. Then the $5$-tuple $(QC(\mathcal{A}),\bullet,\varepsilon,\alpha,\beta)$ is a BiHom-Jordan colour algebra.
\end{cor}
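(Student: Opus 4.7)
The plan is to follow the template of the proof of Proposition \ref{DERhomJORDAN}, adapting it to the minus sign in the new $\bullet$ and to the quasi-centroid defining relation in place of the derivation relation. First I would verify compatibility with $\alpha,\beta$, which is immediate: each $D\in QC(\mathcal A)$ satisfies $[D,\alpha]=0$, and since compositions of maps commuting with $\alpha$ also commute with $\alpha$, we obtain $\alpha(D_1\bullet D_2)=\alpha(D_1)\bullet\alpha(D_2)$, and similarly for $\beta$.

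Next I would check the BiHom-$\varepsilon$-commutativity $\beta(D_1)\bullet\alpha(D_2)=\varepsilon(d_1,d_2)\beta(D_2)\bullet\alpha(D_1)$ required by the definition of a BiHom-Jordan colour algebra. The defining property of $QC(\mathcal A)$, namely
$[D_\gamma(x),\alpha^k\beta^l(y)]=\varepsilon(\gamma,x)[\alpha^k\beta^l(x),D_\gamma(y)]$, is the mechanism that converts the minus sign of the commutator-type $\bullet$ into the plus sign needed here: it allows one to move a $D_i$ across a bracket at the cost of exactly the $\varepsilon$-factor needed to reabsorb the minus. This step essentially records that on the subspace $QC(\mathcal A)$ the two ``brackets'' of compositions used in Propositions \ref{DERhomJORDAN} and its corollary have the same symmetry type after transfer.

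The main body of the proof consists in verifying the BiHom-Jordan colour identity
\[
\circlearrowleft_{D_1,D_3,D_4}\varepsilon(d_4,d_1+d_3)\,as_{\alpha,\beta}\bigl(\beta^2(D_1)\bullet\alpha\beta(D_2),\alpha^2\beta(D_3),\alpha^3(D_4)\bigr)=0.
\]
I would expand $\beta^2(D_1)\bullet\alpha\beta(D_2)$, then $\bigl(\beta^2(D_1)\bullet\alpha\beta(D_2)\bigr)\bullet\alpha^2\beta(D_3)$, then $\bigl(\cdots\bigr)\bullet\alpha^3(D_4)$, and separately $\alpha(\beta^2(D_1)\bullet\alpha\beta(D_2))\bullet\bigl(\alpha^2\beta(D_3)\bullet\alpha^3(D_4)\bigr)$, obtaining exactly eight composition terms on each side (indexed by the $2^3$ orderings coming from the three $\bullet$'s), as in Proposition \ref{DERhomJORDAN}. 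The only difference from that proposition is a sign flip in each factor arising from the minus in the new $\bullet$. Summing over the cyclic permutations of $(D_1,D_3,D_4)$ and using the quasi-centroid identity to transpose compositions at the price of the required $\varepsilon$-sign, each term cancels against its cyclic partner by exactly the same pairing that made Proposition \ref{DERhomJORDAN} work.

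The principal obstacle will be purely combinatorial: keeping the $\varepsilon$-factors, the minus signs from $\bullet$, and the cyclic-sum signatures correctly aligned while matching terms in pairs. No new conceptual ingredient is needed beyond the explicit quasi-centroid relation and the earlier observation that $[QC(\mathcal A),QC(\mathcal A)]\subseteq QDer(\mathcal A)$, which guarantees that the expressions encountered make sense and remain in the appropriate subspace. Once the signs are tracked faithfully through the eight-term expansion, the cyclic cancellation proceeds verbatim as in Proposition \ref{DERhomJORDAN} and the BiHom-Jordan colour identity follows.
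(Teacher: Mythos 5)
Your plan omits the one step that constitutes the paper's entire proof, and the step you add in its place does not work as described. The paper's proof of this corollary is only a closure computation: using the quasi-centroid identity $[D(x),\alpha^{k}\beta^{l}(y)]=\varepsilon(d,x)[\alpha^{k}\beta^{l}(x),D(y)]$ twice (once for $D_1$, once for $D_2$, together with $[D_i,\alpha]=[D_i,\beta]=0$), it shows that $D_1\bullet D_2$ again satisfies this identity, i.e. $D_1\bullet D_2\in QC(\mathcal A)$; the BiHom-commutativity and the BiHom-Jordan colour identity are then taken as inherited from the computation of Proposition \ref{DERhomJORDAN}, which is purely formal in compositions and never uses the derivation hypothesis. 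You never verify closure, and it does not follow from $[QC(\mathcal A),QC(\mathcal A)]\subseteq QDer(\mathcal A)$, which you invoke instead: membership in $QDer(\mathcal A)$ is not membership in $QC(\mathcal A)$, and without closure the $5$-tuple $(QC(\mathcal A),\bullet,\varepsilon,\alpha,\beta)$ is not even an algebra, so there is nothing to which the Jordan axioms could apply.

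Your commutativity step is also unsound. The quasi-centroid relation constrains how $D$ interacts with the bracket of $\mathcal A$; it says nothing about composition of endomorphisms, which is what $\bullet$ is built from, so it cannot ``reabsorb the minus'': $D_1\circ D_2-\varepsilon(d_1,d_2)\,D_2\circ D_1$ is $\varepsilon$-antisymmetric identically in $End(\mathcal A)$, hence $\beta(D_1)\bullet\alpha(D_2)=-\varepsilon(d_1,d_2)\beta(D_2)\bullet\alpha(D_1)$, the opposite of the BiHom-Jordan commutativity axiom. Likewise, running the paper's two-step closure computation with the minus sign produces $[D_1\bullet D_2(x),\alpha^{k+s}\beta^{l+t}(y)]=-\varepsilon(d_1+d_2,x)[\alpha^{k+s}\beta^{l+t}(x),D_1\bullet D_2(y)]$, with the wrong sign. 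The paper's own proof silently expands $\bullet$ with a plus sign, i.e. the intended product is the $\varepsilon$-anticommutator $D_1\circ D_2+\varepsilon(d_1,d_2)\,D_2\circ D_1$, matching Proposition \ref{DERhomJORDAN} and the classical fact that the quasi-centroid is closed under the Jordan product; the minus in the statement is a misprint. Taking it at face value and trying to patch the symmetry with the quasi-centroid identity, as you propose, cannot be completed; with the plus sign, your eight-term cyclic expansion is a redundant repetition of Proposition \ref{DERhomJORDAN}, and what actually remains to be proved is exactly the closure computation you left out.
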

\begin{proof} We need only to show that $D_{1}\bullet D_{2} \in QC(\mathcal{A})$, for all $D_{1}, D_{2} \in \mathcal{H}(QC(\mathcal{A})).$\\
Assume that $x,y \in \mathcal{H}(\mathcal{A})$, we have
\begin{align*}
&[D_{1}\bullet D_{2}(x),\alpha^{k+s}\beta^{l+t}(y)]\\
&=[D_{1}\circ D_{2}(x),\alpha^{k+s}\beta^{l+t}(y)]+\varepsilon(d_1,d_2)[D_{2}\circ D_{1}(x),\alpha^{k+s}\beta^{l+t}(y)]\\
&=\varepsilon(d_1,d_2+x)[D_{2}(x),D_{1}(\alpha^{k+s}\beta^{l+t}(y))]+\varepsilon(d_2,x)[D_{1}(x),D_{2}(\alpha^{k+s}\beta^{l+t}(y))] \\
&=\varepsilon(d_1,d_2)\varepsilon(d_1+d_2,x)[\alpha^{k+s}\beta^{l+t}(x),D_{2}\circ D_{1}(y)]+\varepsilon(d_1+d_2,x)[\alpha^{k+s}\beta^{l+t}(x),D_{1}\circ D_{1}(y)] \\
&=\varepsilon(d_1+d_2,x)[\alpha^{k+s}\beta^{l+t}(x),D_{1}\bullet D_{2}(y)].
\end{align*}
Hence $D_{1}\bullet D_{2} \in QC(\mathcal{A}).$
\end{proof}

\end{document}